\newtheorem{theorem}{Theorem}
\newtheorem{lemma}[theorem]{Lemma}
\newtheorem{proposition}[theorem]{Proposition}
\newtheorem{definition}[theorem]{Definition}
\newtheorem{remark}[theorem]{Remark}
\numberwithin{equation}{section}
\numberwithin{theorem}{section}
\newcommand\Lc{\mathcal{L}}
\newcommand{\dd}{\mathrm{d}}
\begin{document}
	
		\title{Optimal Dividend, Reinsurance and Capital Injection Strategies for Collaborating Business Lines: The Case of Excess-of-Loss Reinsurance}
\author{
Tim J. Boonen\thanks{Department of Statistics and Actuarial Science, School of Computing and Data Science, University of Hong Kong, China. Email: \texttt{tjboonen@hku.hk}.}
\and
Engel John C. Dela Vega\thanks{Department of Statistics and Actuarial Science, School of Computing and Data Science, University of Hong Kong, China. Email: \texttt{ejdv@hku.hk}.}
}
	
	\date{}	
	\maketitle
	
	\begin{abstract}
This paper considers an insurer with two collaborating business lines that must make three critical decisions: (1) dividend payout, (2) a combination of proportional and excess-of-loss reinsurance coverage, and (3) capital injection between the lines. The reserve level of each line is modeled using a diffusion approximation, with the insurer's objective being to maximize the weighted total discounted dividends paid until the first ruin time. We obtain the value function and the optimal strategies in closed form. We then prove that the optimal dividend payout strategy for bounded dividend rates is of threshold type, while for unbounded dividend rates it is of barrier type. The optimal combination of proportional and excess-of-loss reinsurance is shown to be pure excess-of-loss reinsurance. We also show that the optimal level of risk ceded to the reinsurer decreases as the aggregate reserve level increases. The optimal capital injection strategy involves transferring reserves to prevent the ruin of one line. Finally, numerical examples are presented to illustrate these optimal strategies.
	\end{abstract}
	
	\noindent
	
\section{Introduction}	

Optimal dividend payout problems have been a long-standing topic in the field of actuarial science, as dividends serve as a classical metric for assessing a company's value. In the seminal paper of \cite{definetti1957}, it is shown that the optimal strategy to distribute dividends is the one that maximizes the total discounted dividends paid to shareholders until bankruptcy time (i.e., the ruin time). This strategy is known as a \emph{barrier strategy}, where dividends are paid only when the reserve level exceeds a certain level, called the barrier. In this framework, the amount of dividends to be paid is the excess of the reserve above the barrier, effectively capping the reserves at this level. Since \cite{definetti1957}, numerous extensions and variations have been explored \citep[see, e.g.,][for comprehensive reviews]{albrecher2009,avanzi2009}. However, it is important to note that most of these papers focus on the univariate case, where the company manages reserves for a single line of business.

We consider an extension of the classical problem studied by \cite{definetti1957}, involving a company with multiple lines of business, each with its own reserves. This scenario introduces a level of complexity that requires a multivariate approach to understanding the dividend distribution. In this context, defining the ruin time is more complicated. In the univariate case, the ruin time is simply defined as the first time when a company's reserves fall below zero. However, in the multivariate case, there are several definitions, including: (i) \emph{first} ruin time: the first time when at least one reserve level falls below zero; (ii) \emph{sum} ruin time: the first time when the sum of the reserves across all lines falls below zero; and (iii) \emph{simultaneous} ruin time: the first time when all reserve levels fall below zero simultaneously. Consequently, much of the research in the multivariate case has focused on minimizing the probabilities of ruin \citep[see, e.g.,][Chapter XIII, and references therein]{asmussenbook2010}.

Although many studies focus on minimizing the probabilities of ruin in the multivariate case, fewer have specifically aimed to maximize the total discounted dividends paid. To our knowledge, \cite{czarna2011} are the first to study the maximization of total discounted dividends for two business lines, using reserve processes that follow the Cram\'{e}r-Lundberg (CL) model (i.e., a compound Poisson process). Subsequent studies using the CL model include \cite{liucheung2014}, \cite{albrecher2017}, \cite{azcue2019}, \cite{azcuemuler2021}, and \cite{strietzel2022}. Reserve levels modeled as diffusion processes in the multivariate setting have been explored in  \cite{gu2018}, \cite{grandits2019saj}, \cite{yang2025}, and \cite{boonen2025}.

In addition to maximizing the total discounted dividends, managing risks across multiple lines of business is essential to ensure the sustainability of dividend payouts. Risk control in the form of reinsurance has also been studied in the multivariate framework of optimal dividend problems. Proportional reinsurance has been explored in the multivariate case, as presented in \cite{czarna2011}, \cite{liucheung2014}, \cite{azcue2019}, \cite{strietzel2022}, \cite{yang2025}, and \cite{boonen2025}.

Given the complexities associated with the management of multiple business lines, effective risk management strategies are essential. One common approach is capital injection, which involves transferring reserves between lines to support a line that is at risk of its reserves falling below zero. This mechanism, sometimes referred to as \emph{collaboration}, not only mitigates the risk of insolvency for individual lines but also improves the company's capacity to distribute dividends. In \cite{albrecher2017}, \cite{gu2018}, and \cite{boonen2025}, the rule of collaboration states that capital must be transferred from one line to another if a line is at risk of ruin, provided the transfer does not endanger the solvent line. This rule is modified in \cite{grandits2019saj}, in which the solvent line is not obliged to transfer capital to the insolvent line.

The research agenda of this paper is to determine the optimal dividend payout, reinsurance, and capital injection strategies of an insurer with two business lines, subject to the following constraints: (1) the dividend rate may be bounded or unbounded; (2) the reinsurance contract combines proportional and excess-of-loss reinsurance; and (3) capital injections are used primarily to save
 one line from ruin (in the univariate sense). The goal is to maximize the weighted total discounted dividends paid until the first ruin time. The related work of \cite{boonen2025} imposes stricter limitations, allowing only a bounded dividend rate and using only proportional reinsurance. We incorporate excess-of-loss reinsurance because it serves as a common alternative risk control mechanism in the univariate context of optimal dividend payout problems. Moreover, \cite{asmussen2000} shows that optimal excess-of-loss reinsurance outperforms optimal proportional reinsurance. When studying risk measures of terminal losses without dividends, \cite{aboagye2025} derive the optimal design of excess-of-loss reinsurance.

We summarize the main contributions of this paper:
\begin{enumerate}
	\item We show that the optimal combination of proportional and excess-of-loss reinsurance is pure excess-of-loss reinsurance. This finding aligns with the results of \cite{centeno1985}, \cite{zhang2007}, and \cite{liang2011} in the univariate setting.
	
	\item We show that the optimal dividend payout strategy for bounded dividend rates is a \emph{threshold} strategy in which dividends are continuously paid at a fixed rate whenever the aggregate reserve level exceeds a threshold. We also show that for unbounded dividend rates, the optimal strategy is a barrier strategy. These findings are consistent with the results presented in \cite{asmussen2000} within the univariate setting.
	
	\item In the case of bounded dividend rates, we identify three scenarios based on the following conditions: (a) the sum of the maximum dividend rates of the two lines is ``large enough", (b) the maximum dividend rate of the more important line is ``large enough", and (c) the support of the claim size distribution is finite. This leads to three configurations for the reinsurance threshold level $w_0$ and the dividend threshold levels $u_1$ and $u_2$: (i) $w_0\leq u_1\leq u_2$; (ii) $u_1<w_0\leq u_2$; and (iii) $u_1\leq u_2 <w_0$. These configurations are similar to those found in \cite{boonen2025}. We also find that the reinsurance threshold level can exceed exactly one of the dividend threshold levels, which is a possibility ruled out in \cite{asmussen2000}. For unbounded dividend rates, we identify two scenarios based on whether the support of the claim size distribution is finite.
	
	\item We show that the (excess-of-loss) reinsurance level decreases as the aggregate reserve level increases. Moreover, in the bounded support case, the reinsurance levels of both lines remain constant simultaneously when the aggregate reserve level is sufficiently large.  
\end{enumerate}

The remainder of the paper is organized as follows. Section \ref{sec:model} introduces the model and the formulation of the problem. The gain of pure excess-of-loss reinsurance is discussed in Section \ref{sec: gain}. Section \ref{sec:bdd} presents the main results for the case of bounded dividend rates, while Section \ref{sec:ubdd} presents the case of unbounded dividend rates. Numerical examples are provided in Section \ref{sec:numerical}. The proofs of the main results are detailed in Section \ref{sec:proof}. Section \ref{sec:conclusion} concludes.

\section{Model}\label{sec:model}
	Consider a complete probability space $(\Omega,\mathcal F,\mathbb F,\mathbb P)$, where $\mathbb F:=\{\mathcal F_t\}_{t\geq 0}$ is a right-continuous, $\mathbb P$-completed filtration to which all processes defined below, including the Brownian motions and the Poisson processes, are adapted.
	
We consider an insurer with two collaborating lines of business and model each line’s reserve using the classical Cram\'{e}r-Lundberg model:
	\begin{equation}
		P_i(t)=x_i+p_it-\sum_{k=1}^{\Lambda_i(t)} Y_{i,k},
	\end{equation}
where, for $i = 1,2$, $\Lambda_i:=\{\Lambda_i(t)\}_{t\geq 0}$ is a Poisson process with intensity $\lambda_i>0$ that represents claim arrivals, and $\{Y_{i,k}\}_{k \geq 1}$ are independent and identically distributed (i.i.d.), nonnegative claim sizes with common distribution $F_i$ and finite first and second moments $\widetilde{\mu}_i$ and $\widetilde{\sigma}^2_i$. The premium rate $p_i$ is assumed to be calculated under the expected value principle, i.e.,
	\begin{equation*}
		p_i=(1+\kappa_i)\lambda_i \widetilde{\mu}_i,
	\end{equation*}
	where $\kappa_i>0$ is the relative safety loading of Line $i$.
	
	The insurer has to make three decisions regarding the operation of each line:
	\begin{enumerate}
		\item \underline{Reinsurance decision}. The insurer arranges a combination of proportional and excess-of-loss reinsurance in the manner of \cite{centeno1985}: For each line, the insurer first chooses a proportional retention level $\theta_i(t)\in(0,1]$, and then chooses an excess-of-loss retention level $\pi_i(t)\in[0,\infty]$ such that the insurer's $k$th claim, net of proportional and excess-of-loss reinsurance, can be represented by $\min\{\theta_i(t)Y_{i,k},\pi_i(t)\}=\theta_i(t) Y_{i,k}\wedge \pi_i(t)$. We assume that the reinsurer applies the same expected value principle and relative safety loading $\kappa_i$ as the insurer; this is sometimes referred to as  ``cheap reinsurance". 
		
		\item \underline{Dividend payout decision}. The insurer chooses a dividend strategy to distribute profits to the shareholders of each line. Let $C_i(t)\geq 0$ be the cumulative dividends paid by Line $i$ at time $t$. The insurer can consider two types of dividend strategies: (1) an unbounded dividend rate: $C_i(t)$ is an arbitrary nonnegative and nondecreasing function; and (2) a bounded dividend rate: $C_i(t)$ satisfies $C_i(t)=\int_0^t c_i(s)ds$, where $c_i(s)\in[0,\overline c_i]$ and $\overline c_i>0$ is the maximum possible dividend rate. For the unbounded dividend rates, we treat $C_1$ and $C_2$ as singular controls.
		
		\item \underline{Capital injection decision}. We assume that the insurer can inject capital from one line to the other without incurring any additional costs. The capital injection allows the insurer to prevent a line from bankruptcy by using the available resources within the company. Let $L_i(t)$ be the cumulative amount of capital transferred to Line $i$ from Line $3-i$. Moreover, we treat $L_1$ and $L_2$ as singular controls.
		
	\end{enumerate}
	After the purchase of reinsurance contracts, the reserve level of Line $i$ is given by
	\begin{equation*}
		R^{\theta,\pi}_i(t)=x_i+p_i^{\theta,\pi}t-\sum_{k=1}^{\Lambda_i(t)} (\theta_i(t)Y_{i,k}\wedge \pi_i(t)),
	\end{equation*}
	where
	\begin{equation*}
		p_i^{\theta,\pi}
		=p_i-(1+\kappa_i)\mathbb{E}\left[\sum_{k=1}^{\Lambda_i(t)} (Y_{i,k}-(\theta_i(t)Y_{i,k}\wedge \pi_i(t)))\right]=(1+\kappa_i)\lambda_i\mathbb{E}(\theta_i(t)Y_{i,k}\wedge \pi_i(t)).
	\end{equation*}
	Following \cite{grandell1977}, the diffusion approximation of $\{R^{\theta,\pi}_i(t)\}_{t\geq 0}$ is given by
	\begin{equation*}
		dR_i(t)=\lambda_i\kappa_i\theta_i(t)\mu_i\left(\frac{\pi_i(t)}{\theta_i(t)}\right)dt+\sqrt{\lambda_i}\theta_i(t)\sigma_i\left(\frac{\pi_i(t)}{\theta_i(t)}\right)dW_i(t),
	\end{equation*}
	where $W_1:=\{W_1(t)\}_{t\geq 0}$ and $W_2:=\{W_2(t)\}_{t\geq 0}$ are independent Brownian motions and are independent of the Poisson processes $\Lambda_1$ and $\Lambda_2$, and
	\begin{equation}
		\mu_i(s):=\int_0^{s}[1-{F}_i(y)]dy\quad\mbox{and}\quad \sigma^2_i(s):=\int_0^{s}2y[1-{F}_i(y)]dy.
	\end{equation}
	Without loss of generality, we assume $\lambda_i=1$. Write $\overline F_i(y):=1-F_i(y)$ and define
	\begin{equation*}
		M_i:=\inf\{y\geq 0: \overline F_i(y)=0\}\leq \infty.
	\end{equation*}
	By definition, $\mu_i(M_i)=\widetilde \mu_i$ and $\sigma_i^ 2(M_i)=\widetilde \sigma_i^2$. Moreover, we have $\pi_i(t)\in[0,M_i]$.
	
	Let $\theta_i:=\{\theta_i(t)\}_{t\geq 0}$, $\pi_i:=\{\pi_i(t)\}_{t\geq 0}$, $C_i:=\{C_i(t)\}_{t\geq 0}$, and $L_i:=\{L_i(t)\}_{t\geq 0}$ be the proportional reinsurance, excess-of-loss reinsurance, dividend payout, and capital injection strategies, respectively, of Line $i$, for $i=1,2$. Given an admissible control $u:=(\theta_1,\theta_2,\pi_1,\pi_2,C_1,C_2,L_1,L_2)$, the controlled reserve process of Line $i$, denoted by $X_i:=X_i^u$, follows the dynamics 
	\begin{equation}\label{eqn:X og}
		dX_i(t)=\kappa_i\theta_i(t)\mu_i\left(\frac{\pi_i(t)}{\theta_i(t)}\right) dt+\theta_i(t)\sigma_i\left(\frac{\pi_i(t)}{\theta_i(t)}\right)dW_i(t)-dC_i(t)+dL_i(t)-dL_{3-i}(t),
	\end{equation}
	where $X_i(0)=x_i\geq 0$ is the initial reserve of Line $i$. We define the corresponding ruin time as the first ruin time defined as 
	\begin{equation}
		\tau:=\inf \{t\geq 0 : \min\{X_1(t),X_2(t)\}<0\}.
	\end{equation}
	
	We formally define admissible strategies as follows:
	\begin{definition}
		A strategy $u$ is said to be admissible if $u$ is adapted to the filtration $\mathbb{F}$ and satisfies the following conditions:
		\begin{itemize}
			\item[(i)] $\theta_i(t)\in(0,1]$ and $\pi_i(t)\in[0,M_i]$ for $i=1,2$ and $t\geq 0$;
			\item[(ii)] $C_i$ and $L_i$ are nonnegative, nondecreasing, and right continuous with left limits, for $i=1,2$.
		\end{itemize}
		Denote by $\mathcal U$ the set of all admissible strategies.
	\end{definition}
    \begin{remark}
        The constraint on the proportional reinsurance variables, $\theta_i$, implies that the business lines cannot cede all risk to the reinsurer; that is, full reinsurance is not allowed. On the other hand, the excess-of-loss retention can take any nonnegative value, including infinity. We say that $F_i$ has a bounded support if $M_i<\infty$, and has unbounded support otherwise. We can interpret $\pi_i(t)=M_i$ as Line $i$ retaining all risk; that is, Line $i$ does not engage in any reinsurance.
    \end{remark}
	
	The goal of the insurer is to determine optimal dividend, (proportional and excess-of-loss) reinsurance, and capital injection strategies that maximize the weighted average of the dividend payouts from both lines up to the ruin time. That is, the insurer is faced with the following maximization problem:
	\begin{equation}\label{eqn:V}
		\begin{aligned}
			V(x_1,x_2)&:=\sup_{u\in\mathcal U}J(x_1,x_2;u),\\
			J(x_1,x_2;u)&:=\mathbb{E}\left[a\int_0^{\tau} e^{-\delta t}dC_1(t)+(1-a)\int_0^{\tau} e^{-\delta t}dC_2(t)\right],
		\end{aligned}
	\end{equation}
	where $a\in[0,1]$ is a weighting factor that reflects the relative importance of Line $1$ for the company, $\delta>0$ is the discount factor, and the expectation $\mathbb{E}$ is taken under $X_1(0)=x_1$ and $X_2(0)=x_2$. We denote the objective function by $J$ and the value function by $V$. 

    \begin{remark}\label{remark on V}
        From the definition of the value function $V$, we can see that $V$ is increasing in both arguments $x_1$ and $x_2$; that is, $V$ increases as the initial reserves increase.
    \end{remark}
	
	\section{The Gain of Pure Excess-of-Loss Reinsurance}\label{sec: gain}
	
	In this section, we will show that an optimal reinsurance, dividend payout, and capital injection strategy is of the form $u_0:=(1,1,\pi^0_1,\pi_2^0,C_1^0,C_2^0,L_1^0,L_2^0)\in\mathcal U$; that is, the optimal combination of proportional and excess-of-loss reinsurance strategies is the pure excess-of-loss reinsurance.

	\begin{lemma}\label{lemma:h is inc}
		Define 
		\begin{equation*}
			h_i(s):=\frac{\sigma_i^2(s)}{\mu^2_i(s)}.
		\end{equation*}
		Then, $h_i(s)$ is an increasing function for $s>0$.
	\end{lemma}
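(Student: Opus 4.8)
The plan is to differentiate $h_i$ directly and reduce the monotonicity claim to a single integral inequality. By the fundamental theorem of calculus, $\mu_i'(s) = \overline F_i(s)$ and $(\sigma_i^2)'(s) = 2s\,\overline F_i(s)$, so the quotient rule gives
\begin{equation*}
	h_i'(s) = \frac{(\sigma_i^2)'(s)\,\mu_i(s) - 2\,\sigma_i^2(s)\,\mu_i'(s)}{\mu_i(s)^3} = \frac{2\,\overline F_i(s)\,\bigl(s\,\mu_i(s) - \sigma_i^2(s)\bigr)}{\mu_i(s)^3}.
\end{equation*}
Before using this I would record that $\mu_i(s) > 0$ for every $s > 0$: $\overline F_i$ is non-increasing and right-continuous with $\overline F_i(0) = \Pb(Y_{i,1} > 0) > 0$ (the claim law is non-degenerate, since otherwise there is no business), hence $\overline F_i > 0$ on some interval $[0,\varepsilon)$ and the integral defining $\mu_i(s)$ is strictly positive. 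Since also $\overline F_i(s) \ge 0$, the sign of $h_i'(s)$ equals the sign of
\begin{equation*}
	g_i(s) := s\,\mu_i(s) - \sigma_i^2(s) = \int_0^s (s - 2y)\,\overline F_i(y)\,\dd y,
\end{equation*}
so it suffices to show $g_i(s) \ge 0$.

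The crux --- and really the only non-mechanical step --- is the inequality $\sigma_i^2(s) \le s\,\mu_i(s)$. I would argue it in either of two equivalent ways. Probabilistically: $\mu_i(s)$ and $\sigma_i^2(s)$ are the first and second moments of the truncated claim $Z := Y_{i,1}\wedge s$ (by the tail-integral formulas, $\Eb[Z] = \int_0^s \overline F_i(y)\,\dd y$ and $\Eb[Z^2] = \int_0^s 2y\,\overline F_i(y)\,\dd y$), and since $0 \le Z \le s$ almost surely we have $Z^2 \le sZ$, hence $\sigma_i^2(s) = \Eb[Z^2] \le s\,\Eb[Z] = s\,\mu_i(s)$. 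Purely analytically: using $\int_0^s (s - 2y)\,\dd y = 0$, write $g_i(s) = \int_0^s (s - 2y)\bigl(\overline F_i(y) - \overline F_i(s/2)\bigr)\,\dd y$; on $(0,s/2)$ the factor $s - 2y$ is positive and $\overline F_i(y) - \overline F_i(s/2) \ge 0$, while on $(s/2,s)$ both factors are non-positive, so the integrand is nonnegative throughout and $g_i(s) \ge 0$. Either way, $h_i'(s) \ge 0$ on $(0,\infty)$, which proves the claim; moreover $h_i'(s) > 0$ --- i.e.\ strict monotonicity --- holds at every $s$ for which $\overline F_i(s) > 0$ and $\overline F_i$ is non-constant on $[0,s]$, which is precisely where the reinsurance decision is genuinely non-trivial.

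I do not expect a real obstacle here: the factored expression for $h_i'$ makes the reduction immediate, and $\sigma_i^2(s) \le s\,\mu_i(s)$ is settled in one line by truncation. The only care needed is in establishing $\mu_i(s) > 0$ (so that the quotient, and the division by $\mu_i(s)^3$, are legitimate) and, if one wants the strict form of ``increasing'', in tracking which of the factors $\overline F_i(s)$ and $g_i(s)$ are merely nonnegative versus strictly positive.
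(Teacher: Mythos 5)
Your proof is correct and follows essentially the same route as the paper: differentiate $h_i$ to get $h_i'(s)=\frac{2\overline F_i(s)\left(s\mu_i(s)-\sigma_i^2(s)\right)}{\mu_i^3(s)}$, then establish $\sigma_i^2(s)\le s\mu_i(s)$ via the truncation $Z=Y_{i,1}\wedge s$ and $Z^2\le sZ$. The extras you supply --- the check that $\mu_i(s)>0$, the alternative purely analytic argument, and the remark on when monotonicity is strict --- go beyond what the paper writes but do not change the underlying approach.
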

	\begin{proof}
		Taking the derivative yields
		\begin{equation*}
			h'_i(s)=\frac{2\overline F_i(s)\left[s\mu_i(s)-\sigma_i^2(s)\right]}{\mu_i^3(s)}.
		\end{equation*}
        Since $0\leq Y_{i,k}\wedge s\leq s$, we have $\mathbb{E}(Y_{i,k}\wedge s)^2\leq \mathbb{E}(s(Y_{i,k}\wedge s))=s\mathbb{E}(Y_{i,k}\wedge s)$, and thus 	\begin{equation*}
			\sigma_i^2(s)=\mathbb{E}(Y_{i,k}\wedge s)^2\leq s\mathbb{E}(Y_{i,k}\wedge s)=s\mu_i(s).
		\end{equation*}
		This implies that $h_i'(s)\geq 0$, which completes the proof.
	\end{proof}
	
	\begin{proposition}
		For any fixed admissible control $u=(\theta_1,\theta_2,\pi_1,\pi_2,C_1,C_2,L_1,L_2)$, where $\theta_i(t)\in(0,1)$, there exists an admissible control $u_0=(1,1,\pi^0_1,\pi_2^0,C_1^0,C_2^0,L_1^0,L_2^0)$ such that
		\begin{equation*}
			J(x_1,x_2;u)\leq J(x_1,x_2;u_0).
		\end{equation*}
	\end{proposition}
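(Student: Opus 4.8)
The plan is to take the given admissible $u$ and, line by line, replace the pair $(\theta_i,\pi_i)$ by a pure excess-of-loss pair $(1,\pi_i^0)$ chosen so that the diffusion coefficient of the line-$i$ reserve is left unchanged while its drift coefficient can only increase, keeping the dividend and capital-injection controls $C_i,L_i$ as they are. Concretely, writing $s_i(t):=\pi_i(t)/\theta_i(t)$ for the effective retention entering \eqref{eqn:X og}, I would define $\pi_i^0(t)\in[0,M_i]$ by
\[
\sigma_i\bigl(\pi_i^0(t)\bigr)=\theta_i(t)\,\sigma_i\bigl(s_i(t)\bigr).
\]
Such a $\pi_i^0(t)$ exists and is unique: barring the degenerate case where claims vanish almost surely, $s\mapsto\sigma_i^2(s)=\int_0^s 2y\,\overline F_i(y)\,dy$ is continuous and strictly increasing on $[0,M_i]$, with range $[0,\widetilde{\sigma}_i^2]$ (range $[0,\widetilde{\sigma}_i^2)$ if $M_i=\infty$), and $\theta_i(t)\sigma_i(s_i(t))\le\theta_i(t)\widetilde{\sigma}_i<\widetilde{\sigma}_i$; thus $\pi_i^0(t)=\sigma_i^{-1}\bigl(\theta_i(t)\sigma_i(s_i(t))\bigr)$ is a well-defined, measurable and hence $\mathbb F$-adapted process with values in $[0,M_i)$, and $u_0:=(1,1,\pi_1^0,\pi_2^0,C_1,C_2,L_1,L_2)\in\mathcal U$.

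The second step is to show that the new drift dominates the old one; this is exactly where Lemma~\ref{lemma:h is inc} is used. From $\sigma_i(\pi_i^0(t))=\theta_i(t)\sigma_i(s_i(t))\le\sigma_i(s_i(t))$ and the monotonicity of $\sigma_i$ we get $\pi_i^0(t)\le s_i(t)$ (if $\sigma_i(s_i(t))=0$, equivalently $s_i(t)=0$, then both reserves have zero drift and zero volatility and there is nothing to prove). On the event $\{s_i(t)>0\}$, using $\sigma_i^2=h_i\mu_i^2$ together with $h_i(\pi_i^0(t))\le h_i(s_i(t))$,
\[
\mu_i^2\bigl(\pi_i^0(t)\bigr)=\frac{\sigma_i^2(\pi_i^0(t))}{h_i(\pi_i^0(t))}=\theta_i(t)^2\,\frac{h_i(s_i(t))}{h_i(\pi_i^0(t))}\,\mu_i^2\bigl(s_i(t)\bigr)\ge\theta_i(t)^2\,\mu_i^2\bigl(s_i(t)\bigr),
\]
so $\kappa_i\mu_i(\pi_i^0(t))\ge\kappa_i\theta_i(t)\mu_i(s_i(t))$ pointwise in $(t,\omega)$.

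Finally, realize both controlled reserves on the same probability space driven by the same Brownian motions $W_1,W_2$. Subtracting the dynamics \eqref{eqn:X og} for $u_0$ and for $u$, the $dW_i$ terms cancel by the defining equation for $\pi_i^0$ and the $dC_i$, $dL_i$, $dL_{3-i}$ terms cancel as well, so
\[
X_i^0(t)-X_i(t)=\int_0^t\kappa_i\bigl[\mu_i(\pi_i^0(s))-\theta_i(s)\mu_i(s_i(s))\bigr]\,ds\ge 0,\qquad t\ge 0,\ i=1,2.
\]
Consequently every $t$ with $\min\{X_1^0(t),X_2^0(t)\}<0$ also satisfies $\min\{X_1(t),X_2(t)\}<0$, whence the first-ruin time $\tau^0$ under $u_0$ satisfies $\tau^0\ge\tau$; since $u_0$ carries the same nondecreasing $C_1,C_2$ and $\delta>0$, this yields $J(x_1,x_2;u)\le J(x_1,x_2;u_0)$. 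The one conceptual choice that makes the argument run is matching the \emph{diffusion} coefficient rather than the drift — matching the drift instead would leave the difference $X_i^0-X_i$ a martingale rather than a monotone process — and beyond that the remaining points are routine: the adaptedness of $\pi_i^0$ (immediate from the strict monotonicity of $\sigma_i$) and the corner cases $s_i(t)=0$ and $M_i=\infty$.
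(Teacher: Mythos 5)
Your proof is correct, and the first two steps (constructing $\pi_i^0$ by matching diffusion coefficients and invoking Lemma~\ref{lemma:h is inc} to show the drift can only increase) are exactly the paper's. Where you diverge is the final comparison: the paper absorbs the drift surplus into a \emph{larger} dividend stream, setting $C_i^0(t):=C_i(t)+\kappa_i\int_0^t[\mu_i(\pi_i^0(s))-\theta_i(s)\mu_i(\pi_i(s)/\theta_i(s))]\,ds$ and $L_i^0=L_i$, so that $X^{u_0}$ and $X^u$ have identical dynamics, the ruin time is unchanged, and $J$ increases simply because $dC_i^0\ge dC_i$ pointwise; you instead keep $C_i^0=C_i$, which makes $X_i^{u_0}-X_i^u$ a nonnegative, nondecreasing absolutely continuous process, and conclude $\tau^{u_0}\ge\tau^u$ by pathwise domination, so $J$ increases because the discount integral runs longer. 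Both mechanisms are sound; the paper's version has the minor advantage of not having to reason about the ruin time at all (it is literally the same random variable), whereas yours has the minor advantage of not needing to modify the dividend control and makes the comparison principle explicit. Your handling of the boundary cases ($s_i(t)=0$, $s_i(t)$ exceeding $M_i$, $M_i=\infty$) and the adaptedness of $\pi_i^0$ is fine and is in fact slightly more careful than what the paper writes out.
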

	\begin{proof}
		For any fixed $\theta_i(t)\in(0,1)$, $\pi_i(t)$, $C_i(t)$, and $L_i(t)$, there exists a unique $\pi_i^0(t)$ such that
		\begin{equation*}
			\theta_i(t)\sigma_i\left(\frac{\pi_i(t)}{\theta_i(t)}\right)=\sigma_i(\pi_i^0(t)),\quad i=1,2.
		\end{equation*}
		Since $\sigma_i$ is a strictly increasing function and $\theta_i(t)\in(0,1)$, it follows that $\frac{\pi_i(t)}{\theta_i(t)}>\pi_i^0(t)$. By Lemma \ref{lemma:h is inc},
		\begin{equation*}
			h(\pi_i^0(t))\leq h\left(\frac{\pi_i(t)}{\theta_i(t)}\right).
		\end{equation*}
		It follows that 
		\begin{equation*}
			\theta_i^2(t)=\frac{\sigma^2_i(\pi_i^0(t))}{\sigma^2_i\left(\frac{\pi_i(t)}{\theta_i(t)}\right)}\leq \frac{\mu^2_i(\pi_i^0(t))}{\mu^2_i\left(\frac{\pi_i(t)}{\theta_i(t)}\right)},
		\end{equation*}
		which implies that
		\begin{equation*}
			\theta_i(t)\mu_i\left(\frac{\pi_i(t)}{\theta_i(t)}\right)\leq \mu_i(\pi_i^0(t)).
		\end{equation*}
		Let
		\begin{equation*}
			\begin{aligned}
				C^0_i(t)&:=C_i(t)+\kappa_i\int_0^t\left[\mu_i(\pi_i^0(s))-\theta_i(s)\mu_i\left(\frac{\pi_i(s)}{\theta_i(s)}\right)\right]ds \geq C_i(t),\\
				L^0_i(t)&:=L_i(t).
			\end{aligned}
		\end{equation*}
		Then $u_0$ is an admissible control and
		\begin{equation*}
			\begin{aligned}
			dX^{u_0}_i(t)
			&=\kappa_i\mu_i(\pi_i^0(t))\dd t+ \sigma_i(\pi_i^0(t)) \dd W_i(t) - \dd  C^0_i(t) + \dd L^0_i(t) - \dd L^0_{3-i}(t)\\
			&=\kappa_i\theta_i(t)\mu_i\left(\frac{\pi_i(t)}{\theta_i(t)}\right)\dd t+ \theta_i(t)\sigma_i\left(\frac{\pi_i(t)}{\theta_i(t)}\right) \dd W_i(t) - \dd C_i(t) + \dd L_i(t) - \dd L_{3-i}(t).
			\end{aligned}
		\end{equation*}
		Hence, $X^{u_0}_i(t)\overset{\mathscr{D}}{=}X^{u}_i(t)$ while $C^0_i(t) \geq C_i(t)$ and $L^0_i(t)=L_i(t)$ for all $t$. The result then follows.
	\end{proof}
	
	Write $u_1:=(1,1,\pi_1,\pi_2,C_1,C_2,L_1,L_2)\in\mathcal U$. The above proposition implies the following:
	\begin{equation*}
		V(x_1,x_2)=\sup_{u_1\in\mathcal U} J(x_1,x_2;u_1).
	\end{equation*}
	Hence, we only need to consider pure excess-of-loss reinsurance strategies to solve the maximization problem in \eqref{eqn:V}.

	\section{Bounded Dividend Rates}\label{sec:bdd}
	
	In this section, we consider the case in which the dividend rate of Line $i$ is bounded above by some constant $\overline c_i\in (0,\infty)$. In this case, we can represent $C_i(t)$ as
	\begin{equation*}
		C_i(t)=\int_0^t c_i(s)\, \dd s,\quad c_i(s)\in [0,\overline c_i].
	\end{equation*}
	We henceforth represent the dividend control by the rates $c_1$ and $c_2$. Using the results of the previous section, we can then rewrite the reserve process of Line $i$ given in \eqref{eqn:X og} as
	\begin{equation*}
		dX_i(t)=\left[\kappa_i\mu_i(\pi_i(t))-c_i(t)\right]dt+\sigma_i(\pi_i(t))dW_i(t)+dL_i(t)-dL_{3-i}(t),
	\end{equation*}
	 and the value function defined in \eqref{eqn:V} as
	 \begin{equation}\label{eqn:V bdd}
	 	V(x_1,x_2)=\sup_{u_1\in\mathcal U}\mathbb{E}\left[a\int_0^{\tau} e^{-\delta t}c_1(t)dt+(1-a)\int_0^{\tau} e^{-\delta t}c_2(t)dt\right].
	 \end{equation}
    \begin{remark}
        For the case of bounded dividend rates, we can immediately observe that as the initial reserves become sufficiently large, the lines pay the maximum dividend rates and $V$ approaches the limit $\frac{a\overline c_1+(1-a)\overline c_2}{\delta}$.
    \end{remark}
	 For all $\pi_i\in [0,M_i]$ and $c_i\in[0,\overline c_i]$, define the generator $\mathcal L^{\pi,c}(\phi)$ for some $C^{2,2}$ function $\phi$ by
	 \begin{equation*}
	 	\mathcal L^{\pi,c}(\phi)=\sum_{i=1}^2\left[\left[\kappa_i\mu_i(\pi_i)-c_i\right]\frac{\partial \phi}{\partial x_i}+\frac{1}{2}\sigma_i^2(\pi_i)\frac{\partial^2 \phi}{\partial x_i^2}\right]-\delta \phi+ac_1+(1-a)c_2.
	 \end{equation*}
	 We solve the problem in \eqref{eqn:V bdd} via the dynamic programming principle. Using arguments similar to those in \citet[Chapter 2.5.1]{schmidli2007book}, we can characterize the value function $V$ as a solution to the following Hamilton-Jacobi-Bellman (HJB) equation:
	 	\begin{equation}\label{eqn:hjb bdd}
	 		\sup\left\{\sup_{\pi_i\in[0,M_i],c_i\in[0,\overline c_i]}\mathcal L^{\pi,c}(V),\frac{\partial V}{\partial x_1}-\frac{\partial V}{\partial x_2},\frac{\partial V}{\partial x_2}-\frac{\partial V}{\partial x_1}\right\}=0,
	 	\end{equation}
	 	with boundary condition $V(0,0)=0$.
    If we can find a classical solution to the HJB equation \eqref{eqn:hjb bdd}, we can use a standard verification lemma, such as the one presented in \citet[Theorem 2.51]{schmidli2007book}. This lemma essentially states that if a function satisfies the HJB equation and its boundary conditions, then the function is equal to the value function associated with the optimization problem. In this case, the classical solution we obtain will correspond to $V$ defined in \eqref{eqn:V bdd}.

	We begin the analysis by supposing that a classical solution $V$ to the HJB equation in \eqref{eqn:hjb bdd} exists. Since both $\frac{\partial V}{\partial x_1}-\frac{\partial V}{\partial x_2} \leq 0$ and $\frac{\partial V}{\partial x_2}-\frac{\partial V}{\partial x_1}\leq 0$ must hold by \eqref{eqn:hjb bdd}, it follows that $\frac{\partial V}{\partial x_1}=\frac{\partial V}{\partial x_2}$. Hence, there exists a univariate function $g:x\in\mathbb R_{+}\mapsto \mathbb R$ such that
	\begin{equation*}
		g(x)=V(x_1,x_2), \text{ with } x:=x_1+x_2\geq 0.
	\end{equation*} 
	We then have the following
	\begin{equation*}
		g'(x)=\frac{\partial V}{\partial x_i}(x_1,x_2) \quad\mbox{and} \quad g''(x)=\frac{\partial^2 V}{\partial x_i\partial x_j}(x_1,x_2),\quad i,j=1,2.
	\end{equation*}
	
	To solve the optimization problem $\sup_{\pi_i\in[0,M_i],c_i\in[0,\overline c_i]}\mathcal L^{\pi,c}(V)$, we first isolate the optimization over the dividend payout variable $c_i$:
	\begin{equation*}
		\sup_{c_1\in[0,\overline c_1],c_2\in[0,\overline c_2]} \left\{(a-g'(x))c_1+(1-a-g'(x))c_2\right\}.
	\end{equation*} 
This yields the following candidate optimal dividend rates
	\begin{equation*}
		\widehat c_1(x)=\begin{cases}
			0,&\mbox{if $g'(x)>a$},\\
			\overline c_1,&\mbox{if $g'(x)<a$},
		\end{cases}\quad\mbox{and}\quad
		\widehat c_2(x)=\begin{cases}
			0,&\mbox{if $g'(x)>1-a$},\\
			\overline c_2,&\mbox{if $g'(x)<1-a$}.
		\end{cases}
	\end{equation*}
	Define the two constants $u_1$ and $u_2$ by
	\begin{equation}\label{eqn:defn of u1 & u2}
		u_1:=\inf\{u:g'(u)=1-a\}\quad\mbox{and}\quad u_2:=\inf\{u:g'(u)=a\}.
	\end{equation}
	We first hypothesize that $g$ is a concave function; that is, $g''(x)<0$ for all $x$. By symmetry and without loss of generality, we assume $a\leq \frac{1}{2}$. Since $g$ is concave and $a\leq \frac{1}{2}$, we have $u_1\leq u_2$. We then have the following candidate for the optimal dividend strategies:
	\begin{equation*}
		\left(\widehat c_1(x),\widehat c_2(x)\right)=\begin{cases}
			(0,0),&\mbox{if $x<u_1$},\\
			(0,\overline c_2),&\mbox{if $u_1<x<u_2$},\\
			(\overline c_1,\overline c_2),&\mbox{if $x>u_2$}.
		\end{cases}
	\end{equation*}
	
	Next, we solve the optimization over the reinsurance variable $\pi_i$:
	\begin{equation*}
		\sup_{\pi_1\in[0,M_1],\pi_2\in[0,M_2]} \sum_{i=1}^2\left[\kappa_i\mu_i(\pi_i)g'(x)+\frac{1}{2}\sigma_i^2(\pi_i)g''(x)\right].
	\end{equation*}
	We then have the following candidate maximizer:
	\begin{equation}\label{eqn:optimal pi}
		\widehat \pi_1(x) =-\kappa_1\frac{g'(x)}{g''(x)} \quad\mbox{and}\quad \widehat \pi_2(x)=-\kappa_2 \frac{g'(x)}{g''(x)}.
	\end{equation}
	We can express $\widehat \pi_2$ as
	\begin{equation*}
		\widehat \pi_2 (x) = \frac{\kappa_2}{\kappa_1} \widehat \pi_1(x).
	\end{equation*}
	
	Since the capital injection decision $(L_1,L_2)$ is represented as a singular control, the first-order conditions do not apply, unlike for the reinsurance and dividend controls. We will determine the optimal capital injection strategy later by examining the boundaries of specified regions.
	

	We define $w_0$ such that it satisfies
	\begin{equation}\label{eqn:defn of w0}
		\widehat\pi_1(w_0)=M_1.
	\end{equation} 
The existence of $w_0$ will be studied in Theorems \ref{thm:bdd w0<u1<u2}, \ref{thm:bdd u1<w0<u2}, and \ref{thm: bdd u1<u2<w0 or M infty}.
By definition, we can interpret $w_0$ as the aggregate reserve level at which the insurer chooses \emph{zero reinsurance} for Line 1. We refer to $w_0$ as the \emph{reinsurance threshold level}. Without loss of generality, we assume that $\frac{M_2}{M_1} \geq \frac{\kappa_2}{\kappa_1}$. This assumption, combined with \eqref{eqn:optimal pi}, implies that the threshold for the insurer to retain all risk for Line 1 is not greater than that of Line 2. 
	
We introduce the following notations that will be used in the discussion:
	\begin{equation}\label{eqn:notations}
		\begin{aligned}
			\overline N_1(y)&:=\kappa_1\mu_1(y)+\kappa_2\mu_2\left(\frac{\kappa_2}{\kappa_1}y\right),\qquad\qquad\qquad \qquad\overline N_2(y):=\sigma_1^2(y)+\sigma_2^2\left(\frac{\kappa_2}{\kappa_1}y\right),\\
			\overline\gamma_{2\pm}(y)&:=\frac{-\overline N_1(y)\pm \sqrt{\overline N_1(y)^2+2\delta\overline N_2(y) }}{\overline N_2(y)},\\
			\overline\gamma_{3\pm}(y)&:=\frac{-(\overline N_1(y)-\overline c_2)\pm \sqrt{(\overline N_1(y)-\overline c_2)^2+2\delta\overline N_2(y) }}{\overline N_2(y)},\\
			\overline\gamma_{4-}(y)&:=\frac{-(\overline N_1(y)-\overline c_1-\overline c_2)- \sqrt{(\overline N_1(y)-\overline c_1-\overline c_2)^2+2\delta\overline N_2(y) }}{\overline N_2(y)}.
		\end{aligned}
	\end{equation} 
	Write $N_i:=\overline N_i(M_1)$, $\gamma_{2\pm}:=\overline \gamma_{2\pm}(M_1)$, $\gamma_{3\pm}:=\overline \gamma_{3\pm}(M_1)$, and $\gamma_{4-}:=\overline \gamma_{4-}(M_1)$. In addition, we define two functions $\psi,\zeta:(-\infty,0)\mapsto\mathbb R$ by
	\begin{equation}\label{eqn:psi}
		\begin{aligned}
			\psi(z)&:=(1-a-\gamma_{3-}z)e^{\gamma_{3+} \, \zeta(z)}+\gamma_{3-}ze^{\gamma_{3-} \, \zeta(z)}-a,\\
			\zeta(z)&:=\frac{1}{\gamma_{3+}-\gamma_{3-}}\ln\left[\frac{\gamma_{3-}(\gamma_{4-}-\gamma_{3-})z}{(1-a-\gamma_{3-}z)(\gamma_{3+}-\gamma_{4-})}\right].
		\end{aligned}    
	\end{equation}
	
	We can now present the main results. The reinsurance threshold level $w_0$ plays an important role in the results. Its position relative to the dividend threshold levels $u_1$ and $u_2$ yields three mutually exclusive cases: (i) $w_0\leq u_1\leq u_2$ (Theorem \ref{thm:bdd w0<u1<u2}), (ii) $u_1<w_0\leq u_2$ (Theorem \ref{thm:bdd u1<w0<u2}), and (iii) $u_1\leq u_2 <w_0$ (Theorem \ref{thm: bdd u1<u2<w0 or M infty}). Each case has its corresponding analytical form of the value function and a dividend-reinsurance strategy $(\pi_1^*,\pi_2^*,c_1^*,c_2^*)$. Since the capital injection strategies $L_1^*$ and $L_2^*$ are modeled as singular controls, we obtain a uniform optimal strategy as discussed in Theorem \ref{thm:op_L}. The main results also cover whether $F_1$ has bounded or unbounded support (i.e., $M_1<\infty$ or $M_1=\infty$).
    
	\begin{theorem}\label{thm:bdd w0<u1<u2}
		Suppose (i) $M_1<\infty$, (ii) $\overline c_1+\overline c_2\geq N_1-\frac{\kappa_1 N_2}{2M_1}+\frac{\delta M_1}{\kappa_1}$ and (iii) $\psi(\alpha_{LB})\leq 0$, where $\psi$ is defined by \eqref{eqn:psi} and 
		\begin{equation}\label{eqn:alpha w0<u1<u2}
			\begin{aligned}
				\alpha_{LB}&:=\left[\left(\frac{1-a}{\gamma_{3-}}-\alpha_0\right)\mathds{1}_{\left\{\overline c_2\geq N_1-\frac{\kappa_1 N_2}{2M_1}+\frac{\delta M_1}{\kappa_1}\right\}}+\alpha_0\right]\mathds{1}_{\left\{N_1>\frac{\kappa_1N_2}{2M_1}\right\}}\\
				&\qquad+\left[\left(\frac{1-a}{\gamma_{3-}}-\underline\alpha\right)\mathds{1}_{\left\{\overline c_2\geq \frac{\delta N_2}{2N_1}\right\}}+\underline\alpha\right]\mathds{1}_{\left\{N_1\leq \frac{\kappa_1N_2}{2M_1}\right\}},\\
				\alpha_0&:=\frac{(1-a)\gamma_{3+}}{\gamma_{3+}-\gamma_{3-}}\left[\frac{N_1}{\delta}-\frac{\kappa_1 N_2}{2\delta M_1}-\frac{1}{\gamma_{3+}}-\frac{\overline c_2}{\delta}\right],\\
				\underline\alpha&:=-\frac{(1-a)\gamma_{3+}}{\gamma_{3+}-\gamma_{3-}}\left(\frac{1}{\gamma_{3+}}+\frac{\overline{c}_2}{\delta}\right).
			\end{aligned}
		\end{equation}
		We have the following results:
		\begin{enumerate}
			\item $w_0$ defined in \eqref{eqn:defn of w0} exists and is given by
			\begin{equation*}
				w_0=G(M_1),
			\end{equation*}
			where
			\begin{equation}\label{eqn:G}
				G(y):=\int_0^y\frac{\sigma_1^2(z)+\sigma_2^2\left(\frac{\kappa_2}{\kappa_1}z\right)}{2\kappa_1z\mu_1(z)+2\kappa_2z\mu_2\left(\frac{\kappa_2}{\kappa_1}z\right)+\frac{2 \delta}{\kappa_1}z^2-\kappa_1\left(\sigma_1^2(z)+\sigma_2^2\left(\frac{\kappa_2}{\kappa_1}z\right)\right)}dz,
			\end{equation}
			and $u_1$ and $u_2$, defined in \eqref{eqn:defn of u1 & u2}, are explicitly given by 
			\begin{equation}\label{eqn:u1 and u2 w0<u1<u2}
				\begin{aligned}
					u_1&=w_0+\frac{1}{\gamma_{2+}-\gamma_{2-}}\ln\left(\frac{\alpha_{2-}(\gamma_{2-}\alpha_3-1)}{\alpha_{2+}(1-\gamma_{2+}\alpha_3)}\right),\\
					\text{and} \quad u_2&=u_1+\frac{1}{\gamma_{3+}-\gamma_{3-}}\ln\left(\frac{K_{3-}\gamma_{3-}(\gamma_{4-}-\gamma_{3-})}{K_{3+}\gamma_{3+}(\gamma_{3+}-\gamma_{4-})}\right),
				\end{aligned}
			\end{equation}
			where
			\begin{equation}\label{eqn:alpha2pm}
				\alpha_{2+}:= \frac{-\gamma_{2-}-\frac{\kappa_1}{M_1}}{\gamma_{2+}(\gamma_{2+}-\gamma_{2-})}, 
				\quad 
				\alpha_{2-} := \frac{\gamma_{2+}+\frac{\kappa_1}{M_1}}{\gamma_{2-}(\gamma_{2+}-\gamma_{2-})},
			\end{equation}
			\begin{equation}
				\label{eqn:K3+}
				\alpha_3:=\frac{1}{\gamma_{3+}}+\frac{\overline c_2}{\delta}+\left(1-\frac{\gamma_{3-}}{\gamma_{3+}}\right)\frac{K_{3-}}{1-a},  \quad K_{3+}:=\frac{1}{\gamma_{3+}}\left(1-a-K_{3-}\gamma_{3-}\right),
			\end{equation}
			and  $K_{3-}$ is the unique solution to $\psi(z)=0$ on $\left(\alpha_{LB},\alpha_{UB}\right)$ with
			\begin{equation}\label{eqn: alphaLB & UB}
				\alpha_{UB}:=\frac{(1-a)(\gamma_{3+}-\gamma_{4-})}{\gamma_{3-}(\gamma_{3+}-\gamma_{3-})}.
			\end{equation}
			The relation $w_0\leq u_1\leq u_2$ holds.
			
			\item The function $g$, defined by
			\begin{equation}\label{eqn:g w0<u1<u2}
				g(x)=
				\begin{cases}
					K_1\int_0^x \exp\left[\int^{w_0}_z\frac{\kappa_1}{G^{-1}(y)}dy\right]dz&\mbox{if $x<w_0$,}\\
					K_1\left[\alpha_{2+}e^{\gamma_{2+}(x-w_0)}+\alpha_{2-}e^{\gamma_{2-}(x-w_0)}\right]&\mbox{if $w_0\leq x<u_1$,}\\
					K_{3+}e^{\gamma_{3+}(x-u_1)}+K_{3-}e^{\gamma_{3-}(x-u_1)}+\frac{(1-a)\overline c_2}{\delta}&\mbox{if $u_1\leq x<u_2$,}\\
					\frac{a}{\gamma_{4-}}e^{\gamma_{4-}(x-u_2)}+\frac{a\overline c_1+(1-a)\overline c_2}{\delta}&\mbox{if $x\geq u_2$,}
				\end{cases}
			\end{equation} 
			where $G^{-1}$ is the inverse of $G$ defined in \eqref{eqn:G}
			and
			\begin{equation}\label{eqn:K1 w0<u1<u2}
				K_1=(1-a)\left[\alpha_{2+}\gamma_{2+}e^{\gamma_{2+}(u_1-w_0)}+\alpha_{2-}\gamma_{2-}e^{\gamma_{2-}(u_1-w_0)}\right]^{-1},
			\end{equation}
			is a classical solution to the HJB equation in \eqref{eqn:hjb bdd} and thus equals the value function $V$ of the optimization problem in \eqref{eqn:V}. Moreover, $g$ is strictly concave. 
			
			\item The optimal reinsurance and dividend strategies $(\pi_1^*, \pi_2^*, c_1^*, c_2^*)$ are given by
			\begin{equation*}
				(\pi_1^*, \pi_2^*, c_1^*, c_2^*) (x)=
				\begin{cases}
					\left(G^{-1}(x),\frac{\kappa_2}{\kappa_1}G^{-1}(x),0,0\right)&\mbox{if $x < w_0$,}\\
					\left(M_1,\frac{\kappa_2}{\kappa_1}M_1,0,0\right)&\mbox{if $w_0\leq x<u_1$,}\\
					\left(M_1,\frac{\kappa_2}{\kappa_1}M_1,0,\overline{c}_2\right)&\mbox{if $u_1\leq x<u_2$,}\\
					\left(M_1,\frac{\kappa_2}{\kappa_1}M_1,\overline{c}_1,\overline{c}_2\right)&\mbox{if $x\geq u_2$.}
				\end{cases}
			\end{equation*}
		\end{enumerate}		
	\end{theorem}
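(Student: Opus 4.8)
The plan is to construct the candidate solution $g$ explicitly by solving the relevant ODEs piecewise, verify that the gluing conditions pin down the free constants uniquely (this is where the hypotheses (i)--(iii) enter), and then invoke the verification lemma \citep[Theorem 2.51]{schmidli2007book}. The overall architecture mirrors \cite{boonen2025}, but the excess-of-loss layer changes the structure of the HJB generator, so the $x<w_0$ region, where $\widehat\pi_1(x)=-\kappa_1 g'(x)/g''(x)<M_1$ is interior, must be handled by a first-order reduction rather than a constant-coefficient linear ODE.

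\textbf{Step 1: the interior-reinsurance region $x<w_0$.} On this region, substituting the candidate maximizers $\widehat\pi_i$ and $\widehat c_i=(0,0)$ into $\mathcal L^{\pi,c}(g)=0$ and using $\widehat\pi_2 = (\kappa_2/\kappa_1)\widehat\pi_1$ gives, after simplification, an autonomous first-order ODE for the map $x\mapsto \widehat\pi_1(x)$. I would show this ODE is exactly $x'(y) = G'(y)$ where $G$ is the function in \eqref{eqn:G}; integrating and using $\widehat\pi_1(w_0)=M_1$ with $\widehat\pi_1(0)=0$ (forced by the boundary behaviour $g(0)=0$, $g$ concave) yields $w_0 = G(M_1)$ and the implicit relation $\widehat\pi_1(x)=G^{-1}(x)$. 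One then recovers $g$ on $[0,w_0)$ by integrating $g'$, where $g'$ is determined up to the multiplicative constant $K_1$ from the relation $g''/g' = -\kappa_1/\widehat\pi_1(x) = -\kappa_1/G^{-1}(x)$; this gives the first branch of \eqref{eqn:g w0<u1<u2}. Condition (ii), $\overline c_1+\overline c_2\ge N_1-\tfrac{\kappa_1 N_2}{2M_1}+\tfrac{\delta M_1}{\kappa_1}$, is what guarantees the denominator in $G$ stays positive on $(0,M_1)$ and hence that $w_0$ is finite and well-defined; I would check this sign condition first.

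\textbf{Step 2: the three constant-coefficient regions and gluing.} On $[w_0,u_1)$, $[u_1,u_2)$, $[x\ge u_2)$ the retention is clamped at $\pi_i^*=(M_1,\tfrac{\kappa_2}{\kappa_1}M_1)$ and the dividend rates are $(0,0)$, $(0,\overline c_2)$, $(\overline c_1,\overline c_2)$ respectively, so $g$ solves a linear second-order ODE with characteristic roots $\gamma_{2\pm}$, $\gamma_{3\pm}$, $\gamma_{4\pm}$ (the $\gamma_{4+}$ mode is dropped to keep $g$ bounded by $\tfrac{a\overline c_1+(1-a)\overline c_2}{\delta}$, forcing the particular-plus-$\gamma_{4-}$ form on the top branch). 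I would then impose $C^1$ and $C^2$ matching at $w_0$, $u_1$, $u_2$ together with the defining conditions $g'(u_1)=1-a$ and $g'(u_2)=a$ from \eqref{eqn:defn of u1 & u2}. Matching at $w_0$ is automatic in $g'$ and forces the displayed $\alpha_{2\pm}$ via the value of $-\kappa_1/M_1 = g''(w_0)/g'(w_0)$; matching at $u_1$, $u_2$ produces the transcendental equations whose solutions are $K_{3-}$ (the unique root of $\psi(z)=0$ on $(\alpha_{LB},\alpha_{UB})$) and the stated closed forms for $u_1$, $u_2$, $K_1$, $K_{3+}$. The role of hypothesis (iii), $\psi(\alpha_{LB})\le 0$, is to guarantee (together with a sign analysis of $\psi$ at $\alpha_{UB}$ and monotonicity of $\psi$) that this root exists and is unique in the required interval; establishing this, and checking the resulting ordering $w_0\le u_1\le u_2$, is the delicate bookkeeping step.

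\textbf{Step 3: concavity and verification.} With $g$ assembled I would verify $g''<0$ throughout: on $[0,w_0)$ from $g''/g'=-\kappa_1/G^{-1}(x)<0$ and $g'>0$; on each linear region from the signs of $\gamma_{2-},\gamma_{3-},\gamma_{4-}<0$ and the positivity of the surviving coefficients, with continuity of $g''$ across the interfaces handled by the matching. Strict concavity then makes $\widehat\pi_i$ well-defined and nonnegative, gives $g'$ strictly decreasing so that $u_1\le u_2$ is consistent with $a\le \tfrac12$, and validates the case split. Finally, since $g$ depends only on $x=x_1+x_2$ one has $\partial V/\partial x_1 = \partial V/\partial x_2 = g'$, so the two gradient-difference terms in \eqref{eqn:hjb bdd} vanish identically, and by construction $\sup_{\pi,c}\mathcal L^{\pi,c}(g)=0$ on each region with $\le 0$ elsewhere; together with $g(0,0)=g(0)=0$ this lets the verification lemma identify $g$ with $V$, and reading off the maximizers gives the stated $(\pi_1^*,\pi_2^*,c_1^*,c_2^*)$. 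I expect the main obstacle to be Step 2 --- specifically, showing that the nested transcendental system (the $\psi,\zeta$ construction) has a solution in the prescribed range under exactly hypotheses (i)--(iii), and that this is compatible with the ordering $w_0\le u_1\le u_2$ rather than one of the other two configurations; the ODE solving in Steps 1 and 3 is comparatively routine.
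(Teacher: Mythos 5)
Your overall architecture matches the paper's: first-order reduction to derive $\pi_1^* = G^{-1}(x)$ and $w_0 = G(M_1)$ on the interior-retention region, constant-coefficient ODEs with smooth-fit gluing and the side conditions $g'(u_1)=1-a$, $g'(u_2)=a$ on the three clamped regions, then concavity checks and the verification lemma. However, you have mis-identified the role of hypothesis (ii). You claim condition (ii) ``guarantees the denominator in $G$ stays positive on $(0,M_1)$ and hence that $w_0$ is finite and well-defined,'' but this is not where (ii) enters. The denominator of $G$ is automatically positive regardless of $\overline c_1,\overline c_2$: by the bound $\sigma_i^2(s)\le s\mu_i(s)$ (the same inequality used in Lemma~\ref{lemma:h is inc}), one has
\begin{equation}
2\kappa_1 z\mu_1(z)+2\kappa_2 z\mu_2\!\left(\tfrac{\kappa_2}{\kappa_1}z\right)+\tfrac{2\delta}{\kappa_1}z^2 - \kappa_1\left(\sigma_1^2(z)+\sigma_2^2\!\left(\tfrac{\kappa_2}{\kappa_1}z\right)\right) \ge \kappa_1 z\mu_1(z)+\kappa_2 z\mu_2\!\left(\tfrac{\kappa_2}{\kappa_1}z\right)+\tfrac{2\delta}{\kappa_1}z^2 > 0,
\end{equation}
so $G(M_1)$ is finite whenever $M_1<\infty$, i.e., condition (i) alone delivers $w_0$. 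The actual role of condition (ii) is in your Step 2, not Step 1: it is the necessary and sufficient condition for $\alpha_{LB}\le\alpha_{UB}$ (the paper's Lemma~\ref{lemma on LB<UB}), i.e., for the prescribed interval in which $K_{3-}$ must be sought to be nonempty, which in turn is equivalent (via Lemmas~\ref{lemma on xM1<u1} and~\ref{lemma on u1<u2}) to the ordering $w_0\le u_1\le u_2$. Condition (iii) then guarantees that $\psi$ actually vanishes inside that interval rather than below $\alpha_{LB}$. Your proposal assigns all of the ordering/existence bookkeeping to (iii) and mis-uses (ii) earlier; as written you would have nothing left to invoke when you reach the point where $\alpha_{LB}\le\alpha_{UB}$ is needed, so the gluing step would stall.
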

	
        We first discuss the optimal dividend strategies $(c_1^*, c_2^*)$. When the aggregate reserve level is low (i.e., $x<u_1$), the lines do not distribute dividends. Recall that we assume greater importance assigned to Line 2 (i.e., $a\leq \frac{1}{2}$). As soon as the first threshold $u_1$ is crossed, Line 2 pays dividends at the maximum rate and Line 1 has to wait until the second threshold $u_2$ is crossed.
        
        For the optimal reinsurance strategy $(\pi_1^*, \pi_2^*)$, recall that we also assume that $\frac{M_2}{M_1}\geq \frac{\kappa_2}{\kappa_1}$. This ensures that Line 1 does not purchase excess-of-loss reinsurance while Line 2 retains a constant level of risk on incoming claims when the aggregate reserve level is sufficiently large (i.e., $x>w_0$).
        
		Under condition (i), the distribution function $F_1$ has bounded support, which implies that Line 1 has bounded claim sizes. A sufficient condition for conditions (ii) and (iii) is $\overline c_2>N_1-\frac{\kappa_1 N_2}{2M_1}+\frac{\delta M_1}{\kappa_1}$. This suggests that the maximum dividend rate of Line 1 can be relatively low while still satisfying the total maximum dividend rate condition. We also remark that the unique root of $\psi$ ensures that $g$ is differentiable at $x=u_2$.

        The next theorem discusses the case when conditions (i) and (ii) of Theorem \ref{thm:bdd w0<u1<u2} are still satisfied, while condition (iii) is not.
	
	\begin{theorem}\label{thm:bdd u1<w0<u2}
		Suppose (i) $M_1<\infty$, (ii) $\overline c_1+\overline c_2\geq N_1-\frac{\kappa_1 N_2}{2M_1}+\frac{\delta M_1}{\kappa_1}$ and (iii) $\psi(\alpha_{LB})> 0$, where
		\begin{equation}\label{eqn:alphaLB u1<w0<u2}
			\alpha_{LB}:=\left(\alpha_0-\underline \alpha\right)\mathds{1}_{\left\{\overline c_2>\frac{\delta  N_2}{2N_1}\right\}}+\underline\alpha.
		\end{equation}
		Here, $\alpha_0$ and $\underline\alpha$ are defined as in Theorem \ref{thm:bdd w0<u1<u2}. We then have the following results:
		\begin{enumerate}
			\item $w_0$ defined in \eqref{eqn:defn of w0} exists and is given by $H(w_0)=M_1$, where $H$ satisfies the following differential equation
            \begin{equation}\label{eqn:H}
                \frac{\dd}{\dd x}H(x)=\frac{2\kappa_1z\mu_1(H(x))+2\kappa_2z\mu_2\left(\frac{\kappa_2}{\kappa_1}H(x)\right)+\frac{2 \delta}{\kappa_1}\left[H(x)\right]^2-2\overline c_2 H(x)}{\sigma_1^2(H(x))+\sigma_2^2\left(\frac{\kappa_2}{\kappa_1}H(x)\right)}-\kappa_1,
            \end{equation}
			and $u_1$ and $u_2$ defined in \eqref{eqn:defn of u1 & u2} are explicitly given by 
			\begin{equation}\label{eqn:u1 and u2 u1<w0<u2}
				\begin{aligned}
					u_1&=w_0-\frac{1}{\gamma_{3+}-\gamma_{3-}}\ln\left(\frac{K_{3-}\gamma_{3-}\left(\gamma_{3-}+\frac{\kappa_1}{M_1}\right)}{K_{3+}\gamma_{3+}\left(-\frac{\kappa_1}{M_1}-\gamma_{3+}\right)}\right), \text{and}\\
					  u_2&=w_0+\frac{1}{\gamma_{3+}-\gamma_{3-}}\ln\left(\frac{(\gamma_{4-}-\gamma_{3-})\left(-\frac{\kappa_1}{M_1}-\gamma_{3+}\right)}{(\gamma_{3+}-\gamma_{4-})\left(\gamma_{3-}+\frac{\kappa_1}{M_1}\right)}\right),
				\end{aligned}
			\end{equation}
			where $K_{3+}$ is defined in \eqref{eqn:K3+} and $K_{3-}$ is the unique solution to $\psi(z)=0$ on $\left(\frac{1-a}{\gamma_{3-}},\alpha_{LB}\right)$. Moreover, the relation $u_1< w_0\leq u_2$ holds.
			
			\item The function $g$, defined by
			\begin{equation}\label{eqn:g u1<w0<u2}
				g(x)=
				\begin{cases}
					(1-a)\int_0^x \exp\left[\int^{u_1}_z\frac{\kappa_1}{G^{-1}(y)}dy\right]dz&\mbox{if $x<u_1$,}\\
					(1-a)\left[\int_{u_1}^x \exp\left[\int^{u_1}_z\frac{\kappa_1}{H(y)}dy\right]dz+\int_0^{u_1} \exp\left[\int^{u_1}_z\frac{\kappa_1}{G^{-1}(y)}dy\right]dz\right]&\mbox{if $u_1\leq x<w_0$,}\\
					K_{3+}e^{\gamma_{3+}(x-u_1)}+K_{3-}e^{\gamma_{3-}(x-u_1)}+\frac{(1-a)\overline c_2}{\delta}&\mbox{if $w_0\leq x<u_2$,}\\
					\frac{a}{\gamma_{4-}}e^{\gamma_{4-}(x-u_2)}+\frac{a\overline c_1+(1-a)\overline c_2}{\delta}&\mbox{if $x\geq u_2$,}
				\end{cases}
			\end{equation} 
			where $G^{-1}$ is the inverse of $G$ defined in \eqref{eqn:G}, is a classical solution to the HJB equation in \eqref{eqn:hjb bdd} and thus equals the value function $V$ of the optimization problem in \eqref{eqn:V}. In addition, $g$ is strictly concave. 
			
			\item The optimal reinsurance and dividend strategies $(\pi_1^*, \pi_2^*, c_1^*, c_2^*)$ are given by
			\begin{equation*}
				(\pi_1^*, \pi_2^*, c_1^*, c_2^*) (x)=
				\begin{cases}
					\left(G^{-1}(x),\frac{\kappa_2}{\kappa_1}G^{-1}(x),0,0\right)&\mbox{if $x < u_1$,}\\
					\left(H(x),\frac{\kappa_2}{\kappa_1}H(x),0,\overline c_2\right)&\mbox{if $u_1\leq x<w_0$,}\\
					\left(M_1,\frac{\kappa_2}{\kappa_1}M_1,0,\overline{c}_2\right)&\mbox{if $w_0\leq x<u_2$,}\\
					\left(M_1,\frac{\kappa_2}{\kappa_1}M_1,\overline{c}_1,\overline{c}_2\right)&\mbox{if $x\geq u_2$.}
				\end{cases}
			\end{equation*}
		\end{enumerate}		
	\end{theorem}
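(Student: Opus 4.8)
The statement is a verification result, so the plan is to exhibit a strictly concave function $g\in C^{2}(\mathbb R_+)$, set $V(x_1,x_2):=g(x_1+x_2)$, show that $V$ solves the HJB equation \eqref{eqn:hjb bdd} with $V(0,0)=0$, and invoke the verification lemma \citet[Theorem 2.51]{schmidli2007book}. This parallels the treatment of Theorem \ref{thm:bdd w0<u1<u2}; the only structural change is the region ordering $u_1<w_0\leq u_2$, so I would reuse as much of that argument as possible. Since both bracketed inequalities in \eqref{eqn:hjb bdd} must hold, $\partial V/\partial x_1=\partial V/\partial x_2=:g'$ and the problem is genuinely one-dimensional in $x=x_1+x_2$. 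Conjecturing $g''<0$, I substitute the first-order maximizers: the dividend rates $\widehat c_i$ from the signs of $g'-a$ and $g'-(1-a)$, and the reinsurance retentions $\widehat\pi_i=-\kappa_i g'/g''$ from \eqref{eqn:optimal pi}, capped at $M_i$ once $-\kappa_1 g'/g''\geq M_1$. With $a\leq\tfrac12$ and $g'$ decreasing this yields the four regions $\{x<u_1\}$, $\{u_1\leq x<w_0\}$, $\{w_0\leq x<u_2\}$, $\{x\geq u_2\}$. In each region $\mathcal{L}^{\widehat\pi,\widehat c}(g)=0$ is an ODE: on $\{x<u_1\}$, with $c_1=c_2=0$ and $z:=\widehat\pi_1$, it reduces using $g''=-\kappa_1 g'/z$ and the identities $\mu_i'=\overline F_i$, $(\sigma_i^2)'(s)=2s\overline F_i(s)$ to the separable equation $\mathrm dx/\mathrm dz=G'(z)$, i.e.\ $\widehat\pi_1=G^{-1}$ with $G$ from \eqref{eqn:G}, giving the first branch of \eqref{eqn:g u1<w0<u2}; on $\{u_1\leq x<w_0\}$, where $c_2=\overline c_2$, the same reduction with $A(z)$ replaced by $A(z)-\overline c_2$ produces the ODE \eqref{eqn:H} for $H=\widehat\pi_1$, hence the second branch; on the two capped regions $\pi_i$ is frozen at $(M_1,\tfrac{\kappa_2}{\kappa_1}M_1)$ and $g$ solves constant-coefficient linear ODEs whose characteristic roots are $\gamma_{3\pm}$ and (keeping only the decreasing exponential, since $V\leq\frac{a\overline c_1+(1-a)\overline c_2}{\delta}$ is bounded) $\gamma_{4-}$, with particular solutions $\frac{(1-a)\overline c_2}{\delta}$ and $\frac{a\overline c_1+(1-a)\overline c_2}{\delta}$; this gives the last two branches.

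Next, the constants $u_1,u_2,w_0,K_{3+},K_{3-}$ are pinned down by: $g(0)=0$ and $g'(u_1)=1-a$, $g'(u_2)=a$ (the definitions \eqref{eqn:defn of u1 & u2}), which fix the leading coefficients $1-a$ and $a/\gamma_{4-}$; the initial condition $H(u_1)=G^{-1}(u_1)$ for \eqref{eqn:H}; the definition $H(w_0)=M_1$ of $w_0$; and $C^1$ pasting of $g$ at $u_1$, $w_0$, $u_2$. I would note that the corresponding $C^2$ pasting is then automatic: at $w_0$ the left-hand ODE at that point reads $g'(N_1-\tfrac{\kappa_1 N_2}{2M_1}-\overline c_2)-\delta g+(1-a)\overline c_2=0$, which coincides with the right-hand ODE once $g''(w_0^-)=-\kappa_1 g'(w_0^-)/M_1$ is substituted; at $u_1$ and $u_2$ the two adjacent ODEs differ only by multiples of $g'-(1-a)$, resp.\ $g'-a$, which vanish there. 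Carrying out the pasting algebra yields the explicit formulas \eqref{eqn:u1 and u2 u1<w0<u2} and \eqref{eqn:K3+}, shows that $K_{3-}$ must be a root of $\psi$ from \eqref{eqn:psi}, and that such a root is unique in $\big(\tfrac{1-a}{\gamma_{3-}},\alpha_{LB}\big)$; the latter is proved by computing the boundary values of $\psi$, differentiating $\psi$ (and $\zeta$) to control its sign changes, and using hypothesis (iii) in the form $\psi(\alpha_{LB})>0$. Hypotheses (i) and (ii) are then used to show that the resulting quantities are finite and satisfy $0\leq u_1<w_0\leq u_2$ — the sign of $\psi(\alpha_{LB})$ being precisely what separates this ordering from the ordering $w_0\leq u_1$ of Theorem \ref{thm:bdd w0<u1<u2}. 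One also has to check that $H$ exists, is positive and increasing on $[u_1,w_0]$ with $H(w_0)=M_1$, which needs the denominator of \eqref{eqn:H} to stay positive on the relevant range.

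It remains to confirm that the assembled $g$ is genuinely a $C^2$ solution of \eqref{eqn:hjb bdd}. The crucial analytic facts are $g'>0$ and $g''<0$ on all of $\mathbb R_+$: on the interior-reinsurance regions this holds because $\widehat\pi_1=G^{-1}$ (resp.\ $H$) is positive, the denominators in \eqref{eqn:G} and \eqref{eqn:H} being positive by Lemma \ref{lemma:h is inc} via $\sigma_i^2(s)\leq s\mu_i(s)$, and on the capped regions from the signs of $K_{3\pm}$, $a/\gamma_{4-}$ and $\gamma_{3\pm},\gamma_{4-}$; strict concavity legitimizes both the reduction $\partial_{x_1}V=\partial_{x_2}V$ (so the two capital-injection terms in \eqref{eqn:hjb bdd} are identically $0$) and the first-order maximizers. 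Since $\pi_i\mapsto\kappa_i\mu_i(\pi_i)g'+\tfrac12\sigma_i^2(\pi_i)g''$ has derivative $\overline F_i(\pi_i)\big(\kappa_i g'+\pi_i g''\big)$ and is thus unimodal with peak at $-\kappa_i g'/g''$, the claimed $\widehat\pi$ is the constrained maximizer over $[0,M_i]$ (equal to $M_i$ exactly when $x\geq w_0$), and since $g'$ lies in the correct interval relative to $a$ and $1-a$ on each region, $\widehat c$ is the dividend maximizer; combining with the region-wise ODEs and global concavity then gives $\sup_{\pi,c}\mathcal{L}^{\pi,c}(g)=0$ everywhere. The verification lemma then yields $g(x_1+x_2)=V(x_1,x_2)$ and identifies $(\pi_1^*,\pi_2^*,c_1^*,c_2^*)$.

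I expect the second step to be the main obstacle: showing that the transcendental pasting system admits the stated closed-form solution, that $\psi$ has exactly one root in the prescribed interval, and — the genuinely new point relative to Theorem \ref{thm:bdd w0<u1<u2} — that $\psi(\alpha_{LB})>0$ is exactly the condition forcing $u_1<w_0$, so that the middle $H$-region is non-degenerate and the whole construction is self-consistent. Establishing $g''<0$ on the $H$-region (equivalently, positivity of $H$ and of the denominator of \eqref{eqn:H}) under the stated hypotheses is the secondary delicate point.
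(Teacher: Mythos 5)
Your proposal follows essentially the same route as the paper: one-dimensional reduction via $\partial_{x_1}V=\partial_{x_2}V$, the four-region decomposition $\{x<u_1\},\{u_1\leq x<w_0\},\{w_0\leq x<u_2\},\{x\geq u_2\}$ with the $H$-ODE in the second region, smooth-fit pasting at $u_1,w_0,u_2$ to pin down $u_1,u_2,K_{3\pm}$ and to reduce the remaining freedom to a root of $\psi$, then monotonicity of $\psi$ to get uniqueness of $K_{3-}$ in the stated interval and hypothesis (iii) to certify the ordering $u_1<w_0\leq u_2$, and finally concavity plus the standard verification lemma. The only cosmetic difference is your remark that $C^1$ pasting together with the region-wise ODEs automatically implies $C^2$ pasting; the paper simply imposes the second-derivative matching directly, which amounts to the same algebra.
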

	
		By comparing Theorems \ref{thm:bdd w0<u1<u2} and \ref{thm:bdd u1<w0<u2}, we can observe notable similarities in the optimal reinsurance and dividend strategies in both cases. The main distinction is the form of the retention level when the aggregate reserve level is between $w_0$ and $u_1$. Moreover, Theorem \ref{thm:bdd w0<u1<u2} provides an explicit expression for $w_0$, whereas in Theorem \ref{thm:bdd u1<w0<u2}, $w_0$ is presented implicitly. 
		
		In this scenario, a necessary condition is $0<\overline c_2<N_1-\frac{\kappa_1 N_2}{2M_1}+\frac{\delta M_1}{\kappa_1}$, which means that the maximum dividend rate of Line 1, $\overline c_1$, must adequately compensate for the balance arising from the maximum dividend rate of Line 2 to achieve a minimum total of $N_1-\frac{\kappa_1 N_2}{2M_1}+\frac{\delta M_1}{\kappa_1}$. This suggests that Line 1 may have a larger maximum dividend rate than Line 2, which could be strategically advantageous in improving overall financial performance by setting a higher maximum dividend rate.

        It is important to note that \cite{asmussen2000} show that having a reinsurance threshold level that exceeds the dividend threshold level is not possible in the univariate case. However, in our case, this scenario is possible.

        The next theorem discusses the case in which either condition (i) or condition (ii) of Theorems \ref{thm:bdd w0<u1<u2} and \ref{thm:bdd u1<w0<u2} does not hold. 
	
	\begin{theorem}\label{thm: bdd u1<u2<w0 or M infty}
		Suppose that either (i) $M_1=\infty$ or (ii) $\overline c_1+\overline c_2< N_1-\frac{\kappa_1 N_2}{2M_1}+\frac{\delta M_1}{\kappa_1}$ holds. We have the following results:
		\begin{enumerate}
			\item $w_0$ defined in \eqref{eqn:defn of w0} is infinite ($w_0=\infty$), $u_1$ defined in \eqref{eqn:defn of u1 & u2} is the unique solution to
			\begin{equation}\label{eqn:formula for u1 case 3}
				\int_{u_2}^{x}\frac{\kappa_1}{H(y)}dy=\ln\left(\frac{a}{1-a}\right)
			\end{equation}
			in $(0,u_2)$, and $u_2$ defined in \eqref{eqn:defn of u1 & u2} satisfies 
			\begin{equation}
				H(u_2)=M_0,
			\end{equation}
			where $H$ satisfies \eqref{eqn:H} and $M_0$ is a solution to
			\begin{equation}
				-\frac{\kappa_1}{y}=\overline\gamma_{4-}(y)
			\end{equation}
			in $(0,M_1)$. Moreover, the relation $u_1\leq u_2<w_0$ holds.
			
			\item The function $g$, defined by
			\begin{equation}
				g(x)=
				\begin{cases}
					(1-a)\int_0^x \exp\left[\int^{u_1}_z\frac{\kappa_1}{G^{-1}(y)}dy\right]dz&\mbox{if $x<u_1$,}\\
					(1-a)\left[\int_{u_1}^x \exp\left[\int^{u_1}_z\frac{\kappa_1}{H(y)}dy\right]dz+\int_0^{u_1} \exp\left[\int^{u_1}_z\frac{\kappa_1}{G^{-1}(y)}dy\right]dz\right]&\mbox{if $u_1\leq x<u_2$,}\\
					\frac{a}{\overline\gamma_{4-}(M_0)}e^{\overline\gamma_{4-}(M_0)(x-u_2)}+\frac{a\overline c_1+(1-a)\overline c_2}{\delta}&\mbox{if $x\geq u_2$,}
				\end{cases}
			\end{equation} 
			where $G^{-1}$ is the inverse of $G$ defined in \eqref{eqn:G}, is a classical solution to the HJB equation in \eqref{eqn:hjb bdd} and thus equals the value function $V$ of the optimization problem in \eqref{eqn:V}. In addition, $g$ is strictly concave. 
			
			\item The optimal reinsurance and dividend strategies $(\pi_1^*, \pi_2^*, c_1^*, c_2^*)$ are given by
			\begin{equation*}
				(\pi_1^*, \pi_2^*, c_1^*, c_2^*) (x)=
				\begin{cases}
					\left(G^{-1}(x),\frac{\kappa_2}{\kappa_1}G^{-1}(x),0,0\right)&\mbox{if $x < u_1$,}\\
					\left(H(x),\frac{\kappa_2}{\kappa_1}H(x),0,\overline c_2\right)&\mbox{if $u_1\leq x<u_2$,}\\
					\left(M_0,\frac{\kappa_2}{\kappa_1}M_0,\overline{c}_1,\overline{c}_2\right)&\mbox{if $x\geq u_2$.}
				\end{cases}
			\end{equation*}
		\end{enumerate}		
	\end{theorem}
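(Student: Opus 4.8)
The plan is to exhibit an explicit candidate $g$ of the stated piecewise form and to show that $V(x_1,x_2):=g(x_1+x_2)$ is a strictly concave classical solution of the HJB equation \eqref{eqn:hjb bdd} on $\{x_1+x_2>0\}$ with $V(0,0)=0$; the verification lemma \citep[Theorem 2.51]{schmidli2007book} then identifies $g$ with the value function and yields optimality of the feedback strategy, while the singular capital-injection controls are treated uniformly through Theorem~\ref{thm:op_L}. Since $\partial V/\partial x_1=\partial V/\partial x_2=g'(x_1+x_2)$, the two gradient terms in \eqref{eqn:hjb bdd} vanish identically and the problem collapses to a one-dimensional one in $x=x_1+x_2$. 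Throughout, the first-order condition \eqref{eqn:optimal pi} prescribes the feedback retention $\widehat\pi_1(x)=-\kappa_1 g'(x)/g''(x)$ (with $\widehat\pi_2=\tfrac{\kappa_2}{\kappa_1}\widehat\pi_1$) and the bang-bang dividend rule; the assumption $M_2/M_1\ge\kappa_2/\kappa_1$ ensures $\widehat\pi_2\le M_2$ whenever $\widehat\pi_1\le M_1$, so both feedback retentions stay admissible.

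The algebraic backbone is the identity $\overline N_2'(y)=\tfrac{2y}{\kappa_1}\overline N_1'(y)$, which follows from $\mu_i'=\overline F_i$ and $(\sigma_i^2)'(s)=2s\,\overline F_i(s)$; it gives $\tfrac{\dd}{\dd y}\bigl[\overline N_1(y)-\tfrac{\kappa_1\overline N_2(y)}{2y}\bigr]=\tfrac{\kappa_1\overline N_2(y)}{2y^2}$. Setting $\Phi(y):=\overline N_1(y)-\tfrac{\kappa_1\overline N_2(y)}{2y}+\tfrac{\delta y}{\kappa_1}$ we obtain $\Phi'(y)=\tfrac{\kappa_1\overline N_2(y)}{2y^2}+\tfrac{\delta}{\kappa_1}>0$ and $\Phi(0^+)=0$, and one checks that $-\kappa_1/y=\overline\gamma_{4-}(y)$ is equivalent to $\Phi(y)=\overline c_1+\overline c_2$. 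Under either hypothesis of the theorem, $\overline c_1+\overline c_2<\Phi(M_1)$ (this is condition (ii), which holds vacuously when $M_1=\infty$ since $\Phi(\infty)=\infty$), so there is a unique $M_0\in(0,M_1)$ with $\Phi(M_0)=\overline c_1+\overline c_2$. The same $\Phi$ governs the denominators in \eqref{eqn:G} and \eqref{eqn:H}: the denominator of the $G$-integrand equals $2z\,\Phi(z)>0$, and \eqref{eqn:H} reads $\tfrac{\dd}{\dd x}H=\tfrac{2H(\Phi(H)-\overline c_2)}{\overline N_2(H)}$, so $G^{-1}$ is well defined and increasing on $(0,M_0]$ and $H$ is increasing wherever $\Phi(H)>\overline c_2$.

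Next I would build $g$ from region~3 inward. On $\{x\ge u_2\}$, inserting the constant retention $\widehat\pi_1\equiv M_0$ into $\mathcal L^{\widehat\pi,\widehat c}(g)=0$ gives a constant-coefficient linear ODE; keeping $g$ bounded kills the exponentially growing mode, $g'(u_2)=a$ pins the remaining constant, and consistency of $\widehat\pi_1\equiv M_0$ with $-\kappa_1 g'/g''$ is precisely $-\kappa_1/M_0=\overline\gamma_{4-}(M_0)$. On $\{u_1\le x<u_2\}$, substituting $\widehat c_2=\overline c_2$ and $g''=-\kappa_1 g'/H$ into the HJB reduces it to the ODE \eqref{eqn:H}; on $\{x<u_1\}$ the same computation with $\overline c_2=0$ produces \eqref{eqn:G}. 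Fixing $g(0)=0$ and $g'=1-a$ at the left end of region~2 yields the displayed formula; $C^1$-pasting at $u_1$ is automatic, $C^2$-pasting at $u_1$ is $G^{-1}(u_1)=H(u_1)$ (the initial condition for $H$), $C^1$-pasting at $u_2$ is exactly \eqref{eqn:formula for u1 case 3}, and $C^2$-pasting at $u_2$ is $H(u_2)=M_0$ together with the defining relation for $M_0$. To show that admissible thresholds exist, I would parametrize by $p:=G^{-1}(u_1)=H(u_1)$: set $u_1(p):=G(p)$, solve \eqref{eqn:H} forward from $H(u_1(p))=p$, and let $u_2(p)$ be the first point where $H=M_0$; condition \eqref{eqn:formula for u1 case 3} then becomes $\int_p^{M_0}\tfrac{\kappa_1\overline N_2(\pi)}{2\pi^2(\Phi(\pi)-\overline c_2)}\,\dd\pi=\ln\tfrac{1-a}{a}$, whose left-hand side is strictly decreasing in $p$ on $(\Phi^{-1}(\overline c_2),M_0)$, tends to $0$ as $p\uparrow M_0$ and to $+\infty$ as $p\downarrow\Phi^{-1}(\overline c_2)$, hence has a unique root $p^\star>\Phi^{-1}(\overline c_2)$; this $p^\star$ delivers $0<u_1\le u_2<\infty$ and keeps $H$ increasing on $[u_1,u_2]$. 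Finally, the feedback retention equals $G^{-1}(x)<p^\star$, $H(x)\le M_0$, or $M_0$ according to the region, so it never attains $M_1$ — that is, $w_0=\infty$.

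It remains to check the HJB inequalities globally. The function $g$ is strictly concave region by region ($g''=-\kappa_1 g'/\widehat\pi_1<0$ on regions~1--2, and $g''=a\,\overline\gamma_{4-}(M_0)\,e^{\overline\gamma_{4-}(M_0)(x-u_2)}<0$ on region~3 since $\overline\gamma_{4-}(M_0)<0$), and the $C^2$-pasting propagates concavity across $u_1,u_2$; concavity and $g'>0$ then guarantee that $\widehat\pi_1$ is the genuine maximizer of the reinsurance term (the boundary value $M_0$ being attained only on region~3, where the unconstrained optimum already equals $M_0<M_1$), while $g'(u_1)=1-a$, $g'(u_2)=a$ and monotonicity of $g'$ confirm the bang-bang dividend choice in each region; since $g'$ is constant in neither argument, the gradient terms hold. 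Thus $g$ solves \eqref{eqn:hjb bdd}, and the verification lemma gives $g=V$ and optimality of the stated strategy. I expect the principal difficulty to lie in the simultaneous existence-and-ordering argument for the thresholds — the monotone reparametrization by $p$, the positivity of every denominator along the relevant ranges, and the claim that $H$ reaches $M_0$ at a finite $u_2<\infty=w_0$ — together with discharging the hypotheses of the verification lemma, in particular the reduction to the one-dimensional diffusion $X_1+X_2$, the well-posedness and admissibility of the constructed feedback and singular controls, and the transversality/uniform-integrability estimate at the first ruin time.
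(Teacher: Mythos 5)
Your proposal is correct and follows the same overall strategy as the paper (piecewise construction of a concave candidate $g$, ODE reduction in each region via the first-order conditions, smooth pasting at $u_1$ and $u_2$, verification lemma), but the execution is organized around a device the paper never makes explicit: the auxiliary function $\Phi(y)=\overline N_1(y)-\tfrac{\kappa_1\overline N_2(y)}{2y}+\tfrac{\delta y}{\kappa_1}$ together with the identity $\overline N_2'(y)=\tfrac{2y}{\kappa_1}\overline N_1'(y)$, which yields $\Phi'>0$ and $\Phi(0^+)=0$. This buys you several things the paper does not have cleanly. First, the equivalence $-\kappa_1/y=\overline\gamma_{4-}(y)\Leftrightarrow\Phi(y)=\overline c_1+\overline c_2$ makes $M_0=\Phi^{-1}(\overline c_1+\overline c_2)$ \emph{unique}; the paper only proves existence of $M_0$ by the intermediate value theorem applied to $f(y)=\overline\gamma_{4-}(y)+\kappa_1/y$ in two separate lemmas (one for $M_1<\infty$, one for $M_1=\infty$), and Theorem~\ref{thm: bdd u1<u2<w0 or M infty} correspondingly only asserts ``a solution''. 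Second, the unification $\overline c_1+\overline c_2<\Phi(M_1)$ treats hypotheses (i) and (ii) on the same footing, whereas the paper argues the two cases separately (and in the $M_1=\infty$ case, it goes through the contrapositive, showing that the configurations of Theorems~\ref{thm:bdd w0<u1<u2} and~\ref{thm:bdd u1<w0<u2} would require $\overline c_1+\overline c_2\ge\infty$). Third and most substantively, your change of variables $\pi=H(y)$ turns the threshold equation \eqref{eqn:formula for u1 case 3} into $\int_p^{M_0}\tfrac{\kappa_1\overline N_2(\pi)}{2\pi^2(\Phi(\pi)-\overline c_2)}\,\dd\pi=\ln\tfrac{1-a}{a}$ parametrized by $p:=H(u_1)=G^{-1}(u_1)$, which makes the existence and uniqueness of $(u_1,u_2)$ a clean monotone-shooting argument with transparent endpoint behavior; the paper instead runs an IVT argument on $f(x)=-\int_x^{u_2}\kappa_1/H(y)\,\dd y-\ln\tfrac{a}{1-a}$ and appeals to ``$H(0)=0$'', which is uncomfortable because $H$ is only defined on $[u_1,u_2]$ once $u_1$ is fixed (the paper never states the initial condition for $H$ explicitly), so your reparametrization avoids a genuine circularity in the paper's presentation. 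The one place where you assert rather than argue is the discharge of the verification-lemma hypotheses (admissibility of the feedback strategy, integrability at $\tau$), but the paper is equally terse there and defers to Theorem~2.51 of Schmidli, so this is consistent with the level of detail expected.
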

	
		Under condition (i), the support of the distribution function $F_1$ is unbounded, which implies that $w_0$ defined in \eqref{eqn:defn of w0} does not exist. Under condition (ii), the two lines can pay at most $N_1-\frac{\kappa_1 N_2}{2M_1}+\frac{\delta M_1}{\kappa_1}$ in total. This frees up some of the reserves, allowing the insurer to purchase more reinsurance.
        Similar to the results in \cite{asmussen2000}, a reinsurance threshold that exceeds all dividend thresholds is not possible.

    We now discuss the optimal capital injection strategy. We partition the domain of the reserve level pair $(x_1,x_2)\in \mathbb R_+^2$ into seven regions (see Figure \ref{fig:capitaltransfer}). Define the constants $\delta_i$, $i=0,1,2$, corresponding to each of the three cases in Theorems \ref{thm:bdd w0<u1<u2}, \ref{thm:bdd u1<w0<u2}, and \ref{thm: bdd u1<u2<w0 or M infty}, by 
	\begin{equation}
		(\delta_0,\delta_1,\delta_2)=
		\begin{cases}
			(w_0,u_1,u_2)&\mbox{if $w_0\leq u_1\leq u_2$,}\\
			(u_1,w_0,u_2)&\mbox{if $u_1< w_0\leq u_2$,}\\
			(u_1,u_1,u_2)&\mbox{if $u_1\leq u_2<w_0$}.
		\end{cases}
	\end{equation}
	The seven regions $A_i$, $i=1,2,\cdots, 7$, are defined as follows (see Figure \ref{fig:capitaltransfer}):
	\begin{itemize}
		\item $A_1=\{(x_1,x_2):x_1\geq 0, x_2>\delta_2\},$
		\item $A_2=\{(x_1,x_2):x_1>0, x_2\in [0,\delta_2], x_1+x_2> \delta_2\},$
		\item $A_3=\{(x_1,x_2):x_1\geq0, x_2\in (\delta_1,\delta_2], x_1+x_2\leq \delta_2\},$
		\item $A_4=\{(x_1,x_2):x_1> 0, x_2\in [0,\delta_1], x_1+x_2\in (\delta_1,\delta_2]\},$
		\item $A_5=\{(x_1,x_2):x_1\geq 0, x_2\in (\delta_0,\delta_1], x_1+x_2\leq \delta_1\},$
		\item $A_6=\{(x_1,x_2):x_1> 0, x_2\in [0,\delta_0], x_1+x_2\in (\delta_0,\delta_1]\},$
		\item $A_7=\{(x_1,x_2):x_1\geq 0, x_2\geq 0, x_1+x_2\leq \delta_0\}.$
	\end{itemize}
	
	\begin{figure}[htb]
		\centering
		\begin{tikzpicture}[scale=1] 
			\draw[->] (-1,0) -- (7,0) node[right] {\(x_1\)};
			\draw[->] (0,-1) -- (0,7) node[above] {\(x_2\)};
			
			\draw (0,6) node[left] {\((0,\delta_2)\)} -- (6,0) node[below] {\((\delta_2,0)\)}; 
			\draw (0,4) node[left] {\((0,\delta_1)\)} -- (4,0) node[below] {\((\delta_1,0)\)};
			\draw (0,2) node[left] {\((0,\delta_0)\)} -- (2,0) node[below] {\((\delta_0,0)\)};
			\draw (0,6) -- (7,6) node[right] {\(x_2 = \delta_2\)}; 
			\draw (0,4) -- (2,4) node[right] {}; 
			\draw (0,2) -- (2,2) node[right] {}; 
			
			\filldraw[black] (0,4) circle (2pt); 
			\filldraw[black] (0,6) circle (2pt); 
			\filldraw[black] (6,0) circle (2pt);
			\filldraw[black] (0,2) circle (2pt);
			\filldraw[black] (2,0) circle (2pt);
			\filldraw[black] (4,0) circle (2pt);
			
			\node at (0.5, 0.9) {$A_7$};
			\node at (2.1, 0.9) {$A_6$};
			\node at (3, 2) {$A_4$};
			\node at (0.5, 2.9) {$A_5$};
			\node at (4.1, 2.9) {$A_2$};
			\node at (0.5, 4.9) {$A_3$};
			\node at (0.5, 6.5) {$A_1$};

			\node[rotate=-45] at (1.1, 1.1) {\footnotesize\(x_1+x_2 = \delta_0\)};
			\node[rotate=-45] at (3.1, 1.1) {\footnotesize\(x_1+x_2 = \delta_1\)};
			\node[rotate=-45] at (5.1, 1.1) {\footnotesize\(x_1+x_2 = \delta_2\)};
			\node at (1, 4.2) {\footnotesize\(x_2 = \delta_1\)};
			\node at (1, 2.2) {\footnotesize\(x_2 = \delta_0\)};
		\end{tikzpicture}
		\caption{Regions for Capital Injection Decisions}
		\label{fig:capitaltransfer}
	\end{figure}
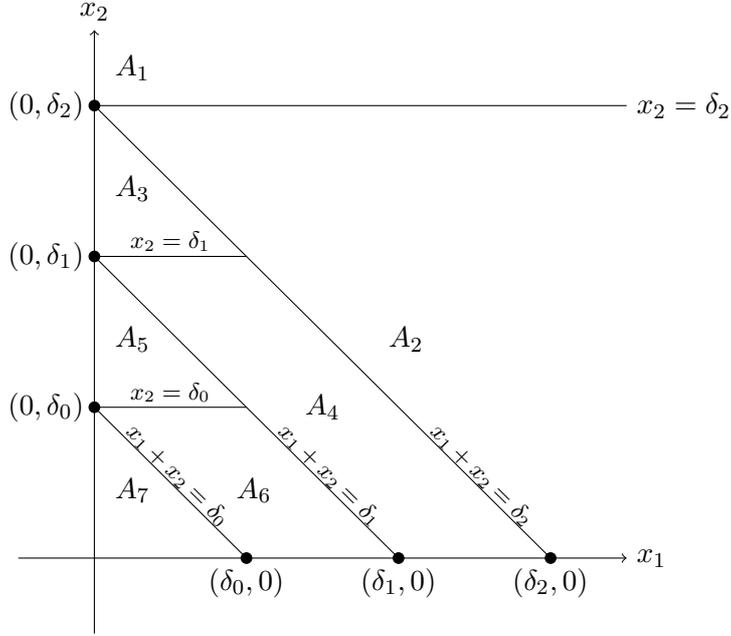
	
	\begin{theorem}
		\label{thm:op_L}
		The optimal capital injection strategy is given by one of the following cases:
		\begin{enumerate}

			\item If $x\in A_1$ and Line 1 reaches zero, Line 2 transfers an amount of $x_2-\delta_2$ to Line 1, and we proceed to region $A_{2}$. If Line 1 does not reach zero, we remain in $A_1$ until we move to region $A_2$ or $A_3$.
			
			\item If $x\in A_2$ and Line 2 reaches zero, Line 1 transfers an amount of $x_1-\delta_2$ to Line 2. We remain in $A_2$ until we move to region $A_1$, $A_3$, or $A_4$, regardless of whether Line 2 reaches zero.
			
			\item If $x\in A_3$ and Line 1 reaches zero, Line 2 transfers an amount of $x_2-\delta_1$ to Line 1, and we proceed to region $A_4$. If Line 1 does not reach zero, we remain in $A_3$ until we move to region $A_2$, $A_4$, or $A_5$.
			
			\item If $x\in A_4$ and Line 2 reaches zero, Line 1 transfers an amount of $x_1-\delta_1$ to Line 2. We remain in $A_4$ until we move to region $A_2$, $A_3$, $A_5$, or $A_6$, regardless of whether Line 2 reaches zero.
			
			\item If $x\in A_5$ and Line 1 reaches zero, Line 2 transfers an amount of $x_2-\delta_0$ to Line 1, and we proceed to region $A_6$. If Line 1 does not reach zero, we stay in $A_5$ until we move to region $A_4$, $A_6$, or $A_7$.
			
			\item If $x\in A_6$ and Line 2 reaches zero, Line 1 transfers an amount of $x_1-\delta_0$ to Line 2. We remain in $A_6$ until we move to region $A_4$, $A_5$, or $A_7$, regardless of whether Line 2 reaches zero.
			
			\item If $x\in A_7$, we remain in $A_7$ until we move to region $A_6$. The problem ends when the reserves exit the nonnegative quadrant.
		\end{enumerate}	
	\end{theorem}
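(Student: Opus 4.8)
The plan is a verification argument carried out on the aggregate reserve. Theorems~\ref{thm:bdd w0<u1<u2}, \ref{thm:bdd u1<w0<u2} and \ref{thm: bdd u1<u2<w0 or M infty} produce, in each of the three cases, a strictly concave $g\in C^2$ with $g(0)=0$ such that $V(x_1,x_2)=g(x_1+x_2)$ is a classical solution of the HJB equation~\eqref{eqn:hjb bdd}; moreover $g$ is constructed there by solving $\mathcal{L}^{\pi^*,c^*}(g)=0$ on $(0,\infty)$, with $(\pi_1^*,\pi_2^*,c_1^*,c_2^*)$ the feedback maps attaining the inner supremum of~\eqref{eqn:hjb bdd}. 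From the supersolution inequality $\sup_{\pi,c}\mathcal{L}^{\pi,c}(V)\le0$ one gets, via the verification lemma \citep[Theorem~2.51]{schmidli2007book}, that $J(x_1,x_2;u)\le g(x_1+x_2)$ for every $u\in\mathcal{U}$. It therefore suffices to show that the strategy $u^*:=(1,1,\pi_1^*,\pi_2^*,c_1^*,c_2^*,L_1^*,L_2^*)$, with $(L_1^*,L_2^*)$ as in the statement, is admissible and satisfies $J(x_1,x_2;u^*)=g(x_1+x_2)$; then $g(x_1+x_2)=V(x_1,x_2)$ and $u^*$ is optimal, so $(L_1^*,L_2^*)$ is an optimal capital--injection strategy.

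The heart of the proof is the trajectory generated by $u^*$. Since every transfer moves reserves between the two lines, the transfer terms cancel in the aggregate $\Sigma(t):=X_1^{u^*}(t)+X_2^{u^*}(t)$, which is thus the one--dimensional diffusion
\[
d\Sigma(t)=\Big(\sum_{i=1}^2\kappa_i\mu_i\big(\pi_i^*(\Sigma(t))\big)-\sum_{i=1}^2 c_i^*(\Sigma(t))\Big)dt+\sum_{i=1}^2\sigma_i\big(\pi_i^*(\Sigma(t))\big)\,dW_i(t),
\]
with continuous paths, not depending on $L^*$. I would then prove the structural claim: \emph{under $u^*$ the pair $(X_1^{u^*},X_2^{u^*})$ stays in $\mathbb{R}_+^2$ on $[0,\tau)$, and $\big(X_1^{u^*}(\tau),X_2^{u^*}(\tau)\big)=(0,0)$ on $\{\tau<\infty\}$} (indeed the pair can leave $\mathbb{R}_+^2$ only when $\Sigma$ reaches $0$, both reserves then vanishing, and since $\pi_i^*(0)=0$ and $c_i^*(0)=0$ the aggregate is in fact absorbed there). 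The verification is region by region: in any of $A_1,\dots,A_6$ only one line can reach level $0$, and at such an instant the other line holds exactly $\Sigma$, which in that region is bounded below by the defining threshold ($\delta_2$, $\delta_1$ or $\delta_0$), hence strictly positive; so the prescribed action --- a one--shot transfer in $A_1,A_3,A_5$ carrying the state into the next region, and a reflection at $0$ of the endangered line in $A_2,A_4,A_6$, an ordinary one--dimensional Skorokhod reflection because the donor holds $\Sigma$, bounded away from $0$ --- is feasible, keeps $(X_1^{u^*},X_2^{u^*})\in\mathbb{R}_+^2$, and produces exactly the transitions listed. In $A_7$ the rule amounts to reflecting $X_1^{u^*}$ in the moving interval $[0,\Sigma(t)]$, which is well posed while $\Sigma>0$ and forces both reserves to $0$ when $\Sigma$ does. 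Along the way one checks that $L_1^*,L_2^*$ built in this way are nonnegative, nondecreasing, right--continuous and $\mathbb{F}$--adapted, so $u^*\in\mathcal{U}$.

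Granting the structural claim, apply It\^o's formula to $e^{-\delta t}g(\Sigma(t))=e^{-\delta t}V\big(X_1^{u^*}(t),X_2^{u^*}(t)\big)$ on $[0,\tau\wedge T]$. As $L^*$ does not enter the dynamics of $\Sigma$, it does not appear, and the drift equals $e^{-\delta t}\big[\mathcal{L}^{\pi^*,c^*}(V)-ac_1^*(\Sigma)-(1-a)c_2^*(\Sigma)\big]=-e^{-\delta t}\big(ac_1^*(\Sigma)+(1-a)c_2^*(\Sigma)\big)$ because $\mathcal{L}^{\pi^*,c^*}(g)\equiv0$ by construction. Taking expectations --- after a routine localization that also accommodates the blow--up of $g'$ at $0$ --- gives
\[
V(x_1,x_2)=\mathbb{E}\!\left[\int_0^{\tau\wedge T}e^{-\delta t}\big(ac_1^*(\Sigma(t))+(1-a)c_2^*(\Sigma(t))\big)\,dt\right]+\mathbb{E}\!\left[e^{-\delta(\tau\wedge T)}V\big(X_1^{u^*}(\tau\wedge T),X_2^{u^*}(\tau\wedge T)\big)\right].
\]
Let $T\to\infty$. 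By monotone convergence the first term increases to $J(x_1,x_2;u^*)$ (the integrand is bounded by $a\overline c_1+(1-a)\overline c_2$). The second term tends to $0$: on $\{\tau<\infty\}$ the terminal value is $g(0)=0$ by the structural claim, while on $\{\tau=\infty\}$ it is at most $\tfrac{a\overline c_1+(1-a)\overline c_2}{\delta}\,e^{-\delta T}\to0$ (since $0\le V\le\tfrac{a\overline c_1+(1-a)\overline c_2}{\delta}$), so dominated convergence applies. Hence $J(x_1,x_2;u^*)=V(x_1,x_2)$, proving the optimality of $u^*$ and thus of $(L_1^*,L_2^*)$.

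The step I expect to be the main obstacle is the structural claim of the second paragraph: verifying region by region that the donor line always carries enough reserve, that the one--shot transfers and the reflections combine into a genuine admissible singular control with no Zeno--type accumulation at the region interfaces, and that the controlled pair can leave the first quadrant only through $\Sigma$ hitting $0$. Equivalently, the content of the theorem is that this capital--injection rule turns the two--dimensional first--ruin problem into the one--dimensional ruin (absorption) problem for the aggregate $\Sigma$; once that is in place, the Dynkin identity and the passage $T\to\infty$ are routine.
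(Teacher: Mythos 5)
Section~\ref{sec:proof} announces proofs only of Theorems~\ref{thm:bdd w0<u1<u2}--\ref{thm: bdd u1<u2<w0 or M infty}, \ref{thm:ubdd M1 finite} and \ref{thm: unbdd M1 infty}; the paper supplies no proof of Theorem~\ref{thm:op_L}, so your argument must stand on its own. Your verification plan --- upper bound from the supersolution inequality, an It\^o/Dynkin identity for the candidate strategy, and the key fact that $L^*$ cancels in the aggregate $\Sigma$ --- is the right blueprint, and the region-by-region check that in $A_1,\dots,A_6$ the donor line always holds at least the defining threshold when the recipient hits zero is correct against the definitions. But the two points you defer are precisely where the content of the theorem lives and should not be waved through. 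First, your reading of Case~7 quietly rewrites the statement. As written, $A_7$ has no prescribed transfers and ``the problem ends when the reserves exit the nonnegative quadrant''; you instead impose a two-sided Skorokhod reflection of $X_1$ in the moving interval $[0,\Sigma(t)]$. Your reading is what makes the result true --- without transfers in $A_7$, one line hits $0$ before $\Sigma$ does with positive probability, so $\Sigma(\tau)>0$, the terminal term $\mathbb{E}[e^{-\delta\tau}g(\Sigma(\tau))]$ is strictly positive, and $J<g(\Sigma(0))$ --- but it is a reinterpretation, not a deduction. Likewise, the stated transfer amounts in $A_2,A_4,A_6$ ($x_1-\delta_2$, etc.) describe one-shot jumps that can carry the pair out of the claimed region (from $A_2$ with $X_2=0$ and $\Sigma>2\delta_2$, the prescribed transfer lands in $A_1$), yet you replace them by Skorokhod local-time reflection. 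The clean fix is to say explicitly that, because $L^*$ does not affect $\Sigma$, \emph{any} admissible $(L_1^*,L_2^*)$ keeping $(X_1,X_2)\in\mathbb{R}_+^2$ until $\Sigma$ first hits $0$ attains $J=g(\Sigma(0))$; the theorem's specific amounts are one choice among many and the ambiguity is immaterial. Without that remark you are proving optimality of a strategy that is not literally the one in the statement.

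Second, the ``structural claim'' is not a detail you can postpone: it is the theorem. Well-posedness of the moving-boundary reflection in $A_7$, absence of Zeno-type accumulation of transfer times at the region interfaces, and the adaptedness and right-continuity of the constructed $L^*$ are asserted, not argued. The localization near $\Sigma=0$ is also substantive: from the explicit form of $g$ in Theorems~\ref{thm:bdd w0<u1<u2}--\ref{thm: bdd u1<u2<w0 or M infty}, $g'(x)=K_1\exp\bigl[\int_x^{w_0}\kappa_1/G^{-1}(y)\,dy\bigr]\to\infty$ as $x\downarrow 0$, so the Dynkin identity must be run on $\{\Sigma\ge\varepsilon\}$ with a further argument that the boundary term vanishes as $\varepsilon\downarrow 0$; and this interacts with whether $\Sigma=0$ is an accessible or inaccessible boundary for the controlled aggregate, a fact you invoke (``absorbed there'') but never verify. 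You honestly flag all of this as the ``main obstacle,'' but that recognition is where the proposal stops, and the deferred parts are exactly what a complete proof of Theorem~\ref{thm:op_L} would have to supply.
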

	
	\section{Unbounded Dividend Rates}\label{sec:ubdd}
	
	In this section, we consider the case in which there are no restrictions on the dividend rates of both lines. The corresponding HJB equation is given by:	
	\begin{equation}\label{eqn:hjb unbdd}
			\sup\left\{\sup_{\pi_i \in[0,M_i]}\Lc^{\pi}(V), \quad \frac{\partial V}{\partial x_1}-\frac{\partial V}{\partial x_2}, \quad \frac{\partial V}{\partial x_2}-\frac{\partial V}{\partial x_1},a-\frac{\partial V}{\partial x_1},1-a-\frac{\partial V}{\partial x_2}\right\} = 0,
		\end{equation}
		with boundary condition $V(0,0)=0$ and
		\begin{equation*}
			\mathcal L^{\pi}(\phi):=\sum_{i=1}^2\left[\kappa_i\mu_i(\pi_i)\frac{\partial \phi}{\partial x_i}+\frac{1}{2}\sigma_i^2(\pi_i)\frac{\partial^2 \phi}{\partial x_i^2}\right],\quad \phi\in C^{2,2}.
		\end{equation*}
        The first, fourth, and fifth terms within the outer supremum function can be handled using arguments similar to those in \citet[Proposition 2.54]{schmidli2007book}. The second and third terms arise from the capital injection strategies modeled as singular controls.
        
    As in the previous section, a standard verification lemma for unbounded dividend rates implies that a classical solution, if it exists, is the value function that solves the associated optimization problem.

    We now present the main results for the unbounded dividend rates. Two cases arise depending on whether $M_1$ is finite (Theorem \ref{thm:ubdd M1 finite}) or infinite (Theorem \ref{thm: unbdd M1 infty}). As in the previous section, each case has its corresponding analytical form of the value function and a reinsurance strategy $(\pi_1^*,\pi_2^*)$. Since the dividend payout strategies $C_1$ and $C_2$ are also modeled as singular controls, similar to the capital injection strategies, we obtain a uniform optimal strategy $(C_1^*,C_2^*,L_1^*,L_2^*)$ in Theorem \ref{thm: ubdd uniform strat}.
	
	\begin{theorem}\label{thm:ubdd M1 finite}
		Suppose $M_1<\infty$. We have the following results:
		\begin{enumerate}
			\item $w_0$ defined in \eqref{eqn:defn of w0} exists and is given by
			\begin{equation*}
				w_0=G(M_1),
			\end{equation*}
			where $G$ is defined in \eqref{eqn:G}, and $u_1$ defined in \eqref{eqn:defn of u1 & u2} is explicitly given by 
			\begin{equation}\label{eqn:u1 unbdd M1 finite}
				u_1=w_0+\frac{1}{\gamma_{2+}-\gamma_{2-}}\ln\left(\frac{\gamma_{2-}(\kappa_1+\gamma_{2+}M_1)}{\gamma_{2+}(\kappa_1+\gamma_{2-}M_1)}\right),
			\end{equation}
			where $\gamma_{2\pm}$ equals $\gamma_{2\pm}(M_1)$ given in \eqref{eqn:notations}. The relation $w_0\leq u_1$ holds.
			
			\item The function $g$, defined by
			\begin{equation}\label{g:unbdd M1 finite}
				g(x)=
				\begin{cases}
					K_1\int_0^x \exp\left[\int^{w_0}_z\frac{\kappa_1}{G^{-1}(y)}dy\right]dz&\mbox{if $x<w_0$,}\\
					\frac{1-a}{\gamma_{2+}\gamma_{2-}(\gamma_{2+}-\gamma_{2-})}\left[\gamma_{2+}^2e^{\gamma_{2-}(x-u_1)}-\gamma_{2-}^2e^{\gamma_{2+}(x-u_1)}\right]&\mbox{if $w_0\leq x<u_1$,}\\
					(1-a)\left[x-u_1+\frac{N_1}{\delta}\right]&\mbox{if $x\geq u_1$,}
				\end{cases}
			\end{equation} 
			where $G^{-1}$ is the inverse of $G$ defined in \eqref{eqn:G}, $N_1$ equals $\overline N_1(M_1)$ defined in \eqref{eqn:notations},
			and
			\begin{equation*}
				K_1=\frac{1-a}{\gamma_{2+}-\gamma_{2-}}\left[\gamma_{2+}e^{\gamma_{2-}(w_0-u_1)}-\gamma_{2-}e^{\gamma_{2+}(w_0-u_1)}\right],
			\end{equation*}
			is a classical solution to the HJB equation in \eqref{eqn:hjb bdd} and thus equals the value function $V$ of the optimization problem in \eqref{eqn:V}. Moreover, $g$ is concave. 
			
			\item The optimal reinsurance strategy $(\pi_1^*, \pi_2^*)$ is given by
			\begin{equation*}
				(\pi_1^*, \pi_2^*) (x)=
				\begin{cases}
					\left(G^{-1}(x),\frac{\kappa_2}{\kappa_1}G^{-1}(x)\right)&\mbox{if $x < w_0$,}\\
					\left(M_1,\frac{\kappa_2}{\kappa_1}M_1\right)&\mbox{if $x\geq w_0$.}
				\end{cases}
			\end{equation*}
		\end{enumerate}		
	\end{theorem}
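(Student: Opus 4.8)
The plan is to exhibit a concave classical solution of the HJB equation \eqref{eqn:hjb unbdd} of the separable form $V(x_1,x_2)=g(x_1+x_2)$ and then close the argument with a verification lemma. As at the beginning of Section \ref{sec:bdd}, the two capital-injection terms of \eqref{eqn:hjb unbdd} force $\partial V/\partial x_1=\partial V/\partial x_2$, so $V$ depends on the state only through the aggregate reserve $x:=x_1+x_2$, with $g'(x)=\partial V/\partial x_i$ and $g''(x)=\partial^2V/\partial x_i\partial x_j$. Working under the hypotheses that $g$ is concave and, by symmetry, that $a\le\tfrac12$, the two singular-dividend terms of \eqref{eqn:hjb unbdd} reduce to the scalar constraint $g'\ge 1-a$, while wherever $g''<0$ the inner supremum over $\pi_i\in[0,M_i]$ is attained, before truncation, at $\widehat\pi_i(x)=-\kappa_i g'(x)/g''(x)$, with $\widehat\pi_2=\tfrac{\kappa_2}{\kappa_1}\widehat\pi_1$. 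This produces a free-boundary problem on $\Rb_+$ with two thresholds: the reinsurance threshold $w_0$ of \eqref{eqn:defn of w0}, at which $\widehat\pi_1$ first reaches $M_1$, and the dividend barrier $u_1$ of \eqref{eqn:defn of u1 & u2}, at which $g'$ first reaches $1-a$. Granting $w_0\le u_1$ (to be verified), the half-line splits into $[0,w_0)$ (reinsurance active, no dividends), $[w_0,u_1)$ (Line $1$ at full retention, no dividends) and $[u_1,\infty)$ (dividends paid out of Line $2$), with boundary datum $g(0)=0$ coming from $V(0,0)=0$.

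I would then build $g$ branch by branch. On $[u_1,\infty)$ the active constraint gives $g'\equiv 1-a$, and requiring $C^2$-pasting with the middle branch (equivalently, that the linear operator governing that branch vanish at $u_1$) forces $g(x)=(1-a)\big(x-u_1+\tfrac{N_1}{\delta}\big)$. On $[w_0,u_1)$, $g$ solves the linear equation $\tfrac12N_2g''+N_1g'-\delta g=0$, whose characteristic roots are exactly $\gamma_{2\pm}=\overline\gamma_{2\pm}(M_1)$ from \eqref{eqn:notations}; imposing the smooth fit $g'(u_1)=1-a$, $g''(u_1)=0$ pins down the two coefficients (continuity of $g$ at $u_1$ being then automatic, since $\gamma_{2+}+\gamma_{2-}=-2N_1/N_2$ and $\gamma_{2+}\gamma_{2-}=-2\delta/N_2$), and this reproduces the middle branch of \eqref{g:unbdd M1 finite}. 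On $[0,w_0)$, inserting $\widehat\pi_i$ into the generator yields the nonlinear ODE $g'\big(\overline N_1(\widehat\pi_1)-\tfrac{\kappa_1\overline N_2(\widehat\pi_1)}{2\widehat\pi_1}\big)=\delta g$; the crucial device is to switch to the independent variable $y:=\widehat\pi_1(x)=-\kappa_1g'(x)/g''(x)$ and to use the elementary identity $(\sigma_i^2)'(s)=2s\mu_i'(s)$, equivalently $\overline N_2'(y)=\tfrac{2y}{\kappa_1}\overline N_1'(y)$, which collapses the ODE to the statement that $\dd x/\dd y$ equals the integrand of $G$ in \eqref{eqn:G}. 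Hence $x=G(y)$, so $\widehat\pi_1(x)=G^{-1}(x)$, $w_0=G(M_1)$ and $g''(x)/g'(x)=-\kappa_1/G^{-1}(x)$; integrating twice with $g(0)=0$ gives the first branch of \eqref{g:unbdd M1 finite} with $g'(w_0)=K_1$. Finally, $C^2$-pasting at $w_0$, i.e.\ imposing $g''(w_0)/g'(w_0)=-\kappa_1/M_1$ on the $[w_0,u_1)$-branch, simplifies after some algebra to the closed form \eqref{eqn:u1 unbdd M1 finite} for $u_1$ and to the stated value of $K_1$, while continuity of $g$ at $w_0$ comes for free from the two ODEs.

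It then remains to verify that this $g$ is a concave classical solution of \eqref{eqn:hjb unbdd} and to apply the verification lemma. First, $G$ is well defined and strictly increasing on $(0,M_1]$: its integrand tends to a finite positive limit as $z\to0^+$ (since $\mu_i(z)\sim z$ and $\sigma_i^2(z)=o(z)$ there) and has a positive denominator on $(0,M_1]$, using $\sigma_i^2(s)\le s\mu_i(s)$ from the proof of Lemma \ref{lemma:h is inc}; the same bound also gives $\tfrac{\kappa_1N_2}{2M_1}\le\tfrac{N_1}{2}$, so $N_1-\tfrac{\kappa_1N_2}{2M_1}+\tfrac{\delta M_1}{\kappa_1}>0$, which is exactly the inequality that forces $\gamma_{2-}<-\kappa_1/M_1$ and hence $w_0\le u_1$ (the logarithm in \eqref{eqn:u1 unbdd M1 finite} being then nonnegative). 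Second, $g$ is concave: $g''\equiv0$ on $[u_1,\infty)$; on $[w_0,u_1)$, $g''<0$ from $\gamma_{2+}\gamma_{2-}<0$ and the boundary data; on $[0,w_0)$, $g''=-\tfrac{\kappa_1}{G^{-1}(x)}g'<0$; with the $C^2$-pasting this makes $g$ globally concave and $g'$ nonincreasing. Third, the dividend constraints hold because $g'\ge 1-a\ge a$ on all of $\Rb_+$. Fourth, the generator inequality $\sup_\pi\{\cdot\}-\delta g\le0$ is checked region by region, with equality on $[0,u_1)$ by construction — the inner supremum being attained at the interior point $(\widehat\pi_1,\widehat\pi_2)$ on $[0,w_0)$, feasible thanks to $\widehat\pi_1=G^{-1}(x)<M_1$ and the assumption $\tfrac{M_2}{M_1}\ge\tfrac{\kappa_2}{\kappa_1}$, with the concavity $g''<0$ supplying the second-order conditions $\overline F_i(\widehat\pi_i)g''<0$, and at the truncated point on $[w_0,u_1)$ — while on $[u_1,\infty)$, $g''=0$ renders the relevant expression affine in $x$, so it suffices to check an affine-in-$x$ bound there. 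Once $g\in C^2$ solves \eqref{eqn:hjb unbdd} with $g(0)=0$ and has the (linear) growth needed to control the stochastic integrals, a verification lemma for singular controls in the spirit of \citet[Theorem 2.51 and Proposition 2.54]{schmidli2007book} identifies $g=V$, and $(\pi_1^*,\pi_2^*)$ is then the induced pointwise maximizer, namely $\pi_1^*(x)=G^{-1}(x)$ on $[0,w_0)$ and $\pi_1^*(x)=M_1$ on $[w_0,\infty)$, with $\pi_2^*=\tfrac{\kappa_2}{\kappa_1}\pi_1^*$.

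I expect the most delicate point to be this last verification step together with the ordering $w_0\le u_1$. One must, for instance, check that for the constructed $g$ the interior maximizer $\widehat\pi_1=-\kappa_1 g'/g''$ stays at or above $M_1$ on $[w_0,u_1)$ (so that full retention is genuinely optimal there), and that each term in the outer supremum of \eqref{eqn:hjb unbdd} is nonpositive on the entire quadrant $\Rb_+^2$, not merely on the two interfaces. This is where the sign inequality $N_1-\tfrac{\kappa_1N_2}{2M_1}+\tfrac{\delta M_1}{\kappa_1}>0$ and the strict monotonicity of $x\mapsto G^{-1}(x)$ carry the argument.
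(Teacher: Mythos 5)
Your proposal is correct and follows essentially the same route as the paper: reduce to the univariate $g$ on the aggregate reserve, split $\Rb_+$ at $w_0=G(M_1)$ and $u_1$, solve the ODEs branch by branch with $C^2$-pasting (the $x\ge u_1$ branch from $g'\equiv 1-a$ and vanishing of the middle-branch operator, the $[w_0,u_1)$ branch from the constant-coefficient ODE with roots $\gamma_{2\pm}$, and the $[0,w_0)$ branch from the nonlinear ODE with boundary datum $g(0)=0$), and close with the standard verification lemma. Your two stylistic variations — obtaining $x=G(\widehat\pi_1)$ via the change of variable $y=\widehat\pi_1(x)$ and the identity $\overline N_2'(y)=\tfrac{2y}{\kappa_1}\overline N_1'(y)$, instead of differentiating the HJB in $x$ as the paper does; and deriving $\gamma_{2-}<-\kappa_1/M_1$ (hence $w_0\le u_1$) directly from the sharp bound $\sigma_i^2(s)\le s\mu_i(s)$ — are slight tidy-ups of the paper's calculation and of Lemma \ref{lemma: k1/M1>-gamma2-}, but change nothing of substance.
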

		Similar to the bounded case where $w_0\leq u_1$, Theorem \ref{thm:ubdd M1 finite} shows that the optimal reinsurance strategy remains constant for both lines when the aggregate reserve levels reach the threshold $w_0$.
	
	\begin{theorem}\label{thm: unbdd M1 infty}
		Suppose $M_1=\infty$. We have the following results:
		\begin{enumerate}
			\item $w_0$ defined in \eqref{eqn:defn of w0} is infinite ($w_0=\infty$) and $u_1$ defined in \eqref{eqn:defn of u1 & u2} is explicitly given by 
			\begin{equation}
				u_1=G(\infty):=\lim _{y\to\infty}G(y),
			\end{equation}
			where $G$ is defined by \eqref{eqn:G}. The relation $u_1<w_0$ holds.
			
			\item The function $g$, defined by
			\begin{equation}\label{eqn:g unbdd M infty}
				g(x)=
				\begin{cases}
					(1-a)\int_0^x \exp\left[\int^{u_1}_z\frac{\kappa_1}{G^{-1}(y)}dy\right]dz&\mbox{if $x<u_1$,}\\
					(1-a)\left[x-u_1+\frac{N_1}{\delta}\right]&\mbox{if $x\geq u_1$,}
				\end{cases}
			\end{equation} 
			where $G^{-1}$ is the inverse of $G$ defined in \eqref{eqn:G} and $N_1$ equals $\overline N_1(M_1)$ defined in \eqref{eqn:notations}, is a classical solution to the HJB equation in \eqref{eqn:hjb bdd} and thus equals the value function $V$ of the optimization problem in \eqref{eqn:V}. Moreover, $g$ is concave. 
			
			\item The optimal reinsurance strategy $(\pi_1^*, \pi_2^*)$ is given by
			\begin{equation*}
				(\pi_1^*, \pi_2^*) (x)=
				\left(G^{-1}(x),\frac{\kappa_2}{\kappa_1}G^{-1}(x)\right),\quad x < u_1.
			\end{equation*}
		\end{enumerate}		
	\end{theorem}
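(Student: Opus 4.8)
The plan is to exhibit the univariate function $g$ of the statement as a classical solution of the HJB equation \eqref{eqn:hjb unbdd} with $g(0,0)=0$, and then to invoke the standard verification lemma for singular controls (the analogue of the verification results in \citet[Theorem 2.51]{schmidli2007book} and \citet[Proposition 2.54]{schmidli2007book} used in the bounded-rate case), together with the admissible dividend and capital-injection controls constructed in Theorem \ref{thm: ubdd uniform strat}. As already noted, the two gradient-difference terms in \eqref{eqn:hjb unbdd} force any classical solution to depend on $(x_1,x_2)$ only through $x:=x_1+x_2$, so it suffices to construct a scalar $g$ and to check $\sup_{\pi}\mathcal L^{\pi}(g)\le 0$, $a-g'\le 0$, $1-a-g'\le 0$ with equality in the appropriate regions, and to read off the reinsurance maximizer.

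First I would record the facts about $G$ and $u_1$. Writing $\overline N_1,\overline N_2$ as in \eqref{eqn:notations}, the denominator of the integrand of $G$ in \eqref{eqn:G} equals $2z\overline N_1(z)+\tfrac{2\delta}{\kappa_1}z^2-\kappa_1\overline N_2(z)$; the inequality $\sigma_i^2(s)\le s\mu_i(s)$ from the proof of Lemma \ref{lemma:h is inc} gives $\kappa_1\overline N_2(z)\le z\overline N_1(z)$, so this denominator is bounded below by $z\overline N_1(z)+\tfrac{2\delta}{\kappa_1}z^2>0$, and hence $G$ is well defined and strictly increasing on $(0,\infty)$. Since the claim sizes have finite first and second moments, $\overline N_1(y)\to N_1:=\kappa_1\widetilde\mu_1+\kappa_2\widetilde\mu_2$ and $\overline N_2(y)\to\widetilde\sigma_1^2+\widetilde\sigma_2^2$ as $y\to\infty$, so the integrand of $G$ is $O(y^{-2})$ and $u_1:=G(\infty)=\lim_{y\to\infty}G(y)<\infty$; thus $G$ is a strictly increasing bijection from $[0,\infty)$ onto $[0,u_1)$ and $G^{-1}$ is defined on $[0,u_1)$ with $G^{-1}(x)\to\infty$ as $x\uparrow u_1$. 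Because $M_1=\infty$, equation \eqref{eqn:defn of w0} has no finite solution, so $w_0=\infty$.

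Next I would construct $g$. Define $g$ on $(0,u_1)$ by the first branch of \eqref{eqn:g unbdd M infty}; using the local behavior of $G$ at $0$ and at $\infty$ one checks that the inner integral converges for $z\in(0,u_1)$ and that $g(0^+)=0$, with $\delta>0$ being what ensures integrability near $z=0$ (where $g'$ is unbounded). Differentiating yields $g'(x)=(1-a)\exp\!\big[\int_x^{u_1}\tfrac{\kappa_1}{G^{-1}(y)}\,dy\big]>0$ and $g''(x)=-\tfrac{\kappa_1}{G^{-1}(x)}\,g'(x)<0$ on $(0,u_1)$, so $\pi\mapsto\sum_i[\kappa_i\mu_i(\pi_i)g'+\tfrac12\sigma_i^2(\pi_i)g'']$ is unimodal in each $\pi_i$ with interior maximizer $\widehat\pi_i=-\kappa_ig'/g''$, i.e.\ $\widehat\pi_1(x)=G^{-1}(x)$ and $\widehat\pi_2(x)=\tfrac{\kappa_2}{\kappa_1}G^{-1}(x)$ as in part 3. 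Substituting $g''=-\tfrac{\kappa_1}{G^{-1}(x)}g'$ reduces $\mathcal L^{\widehat\pi}(g)=0$ to $\delta g(x)=g'(x)\big[\overline N_1(p)-\tfrac{\kappa_1\overline N_2(p)}{2p}\big]$ with $p=G^{-1}(x)$; differentiating both sides in $x$ and using $\overline N_1'(p)=\tfrac{\kappa_1}{2p}\overline N_2'(p)$ (immediate from $\mu_i'=\overline F_i$, $(\sigma_i^2)'(s)=2s\overline F_i(s)$), which is exactly the identity that makes $G$ in \eqref{eqn:G} the correct change of variables, shows that both sides solve the same first-order linear ODE; letting $x\downarrow 0$, where both sides tend to $0$, fixes the integration constant, so the reduced equation (hence $\mathcal L^{\widehat\pi}(g)=0$) holds on $(0,u_1)$, and letting $x\uparrow u_1$ gives $g(u_1^-)=(1-a)N_1/\delta$. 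Extending $g$ to $[u_1,\infty)$ by the affine branch of \eqref{eqn:g unbdd M infty}, the value, first derivative and second derivative all match at $u_1$ (the last because $G^{-1}(u_1)=\infty$ forces $g''(u_1^-)=0$), so $g\in C^2(0,\infty)$ and $g$ is concave. On $[u_1,\infty)$, $g''=0$ makes $\mathcal L^{\pi}(g)$ increasing in each $\pi_i$, hence $\sup_\pi\mathcal L^{\pi}(g)=(1-a)N_1-\delta g(x)=-\delta(1-a)(x-u_1)\le 0$, while $1-a-g'(x)=0$ and $a-g'(x)=a-(1-a)\le 0$; on $(0,u_1)$, $\sup_\pi\mathcal L^{\pi}(g)=\mathcal L^{\widehat\pi}(g)=0$ and $g'(x)>g'(u_1)=1-a\ge a$, so the other two terms are strictly negative; and the gradient-difference terms vanish identically since $g$ depends only on $x_1+x_2$. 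Thus $g$ solves \eqref{eqn:hjb unbdd}, has linear growth, and the verification lemma gives $g=V$ and optimality of $(\pi_1^*,\pi_2^*)(x)=(G^{-1}(x),\tfrac{\kappa_2}{\kappa_1}G^{-1}(x))$ on $\{x<u_1\}$; finally $u_1=\inf\{u:g'(u)=1-a\}$ as in \eqref{eqn:defn of u1 & u2} follows from $g'\equiv 1-a$ on $[u_1,\infty)$ and $g'>1-a$ on $(0,u_1)$.

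I expect the main obstacle to be the free-boundary (smooth-fit) analysis at $x=u_1$: one must establish simultaneously that the barrier $u_1=G(\infty)$ is finite (this is where the finiteness of the second moments is used), that $G^{-1}(x)\to\infty$ and therefore $g'(x)\to 1-a$, $g''(x)\to 0$ as $x\uparrow u_1$, and that the integration constant produced by the interior ODE is precisely $(1-a)N_1/\delta$, so that the affine continuation glues on in a $C^2$ fashion; the algebraic identity $\overline N_1'=\tfrac{\kappa_1}{2p}\overline N_2'$ is the computation that collapses the interior HJB equation onto the ODE defining $G$. A secondary technical point is the singularity $g'(0^+)=\infty$ at the origin, which does not affect the verification argument --- the boundary is treated through $g(0)=0$ --- but which requires checking, via $\delta>0$, that the integral defining $g$ converges near $0$.
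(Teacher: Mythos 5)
Your proposal is correct and follows essentially the same route as the paper: derive $\pi_1^*=G^{-1}$ from the first-order condition and the interior ODE, note $w_0=\infty$ since $M_1=\infty$, show $u_1=G(\infty)<\infty$ from the $O(y^{-2})$ decay of the integrand, and enforce $C^2$ smooth fit at $u_1$ (with $g''(u_1^-)=0$ because $G^{-1}\to\infty$) together with the HJB equation at $u_1$ to pin down the constant $N_1/\delta$. The only stylistic difference is that you phrase the interior step as a forward verification (differentiating the reduced equation and fixing the constant by the $x\downarrow 0$ and $x\uparrow u_1$ limits, with the implicit observation $\kappa_1 G'(0^+)<1$ ensuring integrability at the origin), whereas the paper refers back to the ODE characterization of $\pi_1^*$ from Section 4; both rest on the same change-of-variables identity $\overline N_1'(p)=\frac{\kappa_1}{2p}\overline N_2'(p)$ underlying \eqref{eqn:G}.
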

		Theorem \ref{thm: unbdd M1 infty}, in conjunction with Theorem \ref{thm: ubdd uniform strat} below, implies that the aggregate reserve level must remain below $x=u_1$ since the aggregate reserves above $u_1$ are transferred between lines and are immediately paid as dividends.

    For the optimal dividend payout and capital injection strategy, we partition the domain of the reserve level pair $(x_1,x_2)\in\mathbb R_+^2$ into 3 regions (see Figure \ref{fig:capitaltransfer unbdd}). The three regions $A_i$, $i=1,2,3$, are defined as follows:
	\begin{itemize}
		\item $A_1=\{(x_1,x_2):x_2\geq 0, x_1>u_1\},$
		\item $A_2=\{(x_1,x_2):x_1\in[0,u_1], x_2>0, x_1+x_2>u_1\},$
		\item $A_3=\{(x_1,x_2):x_1\geq 0, x_2\geq 0, x_1+x_2\leq u_1\}.$
	\end{itemize}
	
	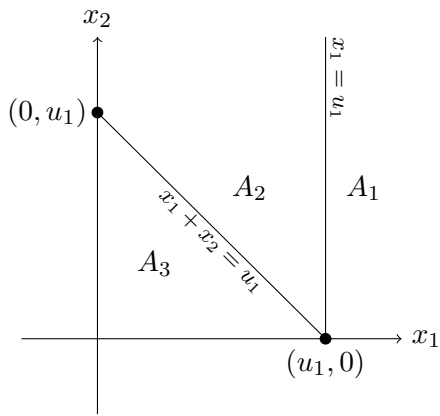
\begin{figure}[htb]
		\centering
		\begin{tikzpicture}[scale=1] 
			\draw[->] (-1,0) -- (4,0) node[right] {\(x_1\)};
			\draw[->] (0,-1) -- (0,4) node[above] {\(x_2\)};
			
			\draw (0,3) node[left] {\((0,u_1)\)} -- (3,0) node[below] {\((u_1,0)\)};
			\draw (3,0) -- (3,4) node[right] {}; 
			
			\filldraw[black] (0,3) circle (2pt); 
			\filldraw[black] (3,0) circle (2pt);
			
			\node at (3.5, 2) {$A_1$};
			\node at (2, 2) {$A_2$};
			\node at (0.75, 1) {$A_3$};

			\node[rotate=-45] at (1.5, 1.3) {\footnotesize\(x_1+x_2 = u_1\)};
			\node[rotate=-90] at (3.15, 3.45) {\footnotesize\(x_1 = u_1\)};
		\end{tikzpicture}
		\caption{Regions for Dividend Payout and Capital Injection Decisions}
		\label{fig:capitaltransfer unbdd}
	\end{figure}
	
	\begin{theorem}\label{thm: ubdd uniform strat}
		The optimal dividend payout and capital injection strategy is given by one of the following cases:
		\begin{enumerate}
			\item If $x\in A_1$, Line 1 transfers an amount $x_1-u_1$ to Line 2, and we proceed to region $A_2$.
			\item If $x\in A_2$, Line 2 pays $x_1+x_2-u_1$ directly as dividends, and we proceed to region $A_1$.
			\item If $x\in A_3$, no dividends are paid and we remain in $A_3$ until we move to region $A_2$. The problem ends when the reserves exit the nonnegative quadrant.
		\end{enumerate}
	\end{theorem}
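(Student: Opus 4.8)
\emph{Proof strategy.} This is a verification (sufficiency) argument built on Theorems~\ref{thm:ubdd M1 finite} and~\ref{thm: unbdd M1 infty}. Those theorems already establish that $V(x_1,x_2)=g(x_1+x_2)$, with $g$ as in \eqref{g:unbdd M1 finite} or \eqref{eqn:g unbdd M infty}, is a $C^{2,2}$ solution of the HJB equation \eqref{eqn:hjb unbdd} that coincides with the value function. Hence it suffices to show that the strategy $u^*$ described in the statement is admissible and attains $V$, i.e.\ $J(x_1,x_2;u^*)=g(x_1+x_2)$; since $J(x_1,x_2;u^*)\le V(x_1,x_2)$ is automatic, only $J(x_1,x_2;u^*)\ge g(x_1+x_2)$ is needed, and the It\^o computation below will in fact produce equality. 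The guiding observation is that the active terms of \eqref{eqn:hjb unbdd} along the candidate path match the three prescribed actions: $\partial V/\partial x_1=\partial V/\partial x_2=g'$ makes the two capital-injection terms identically zero (transfers are ``free''); $1-a-\partial V/\partial x_2=1-a-g'$ vanishes exactly on $\{x_1+x_2\ge u_1\}$ (Line~2 pays dividends only there); $a-\partial V/\partial x_1=a-g'\le 0$ on that set because $g'=1-a\ge a$ when $a\le\frac12$ (so $C_1^*\equiv 0$ loses nothing); and $\sup_\pi\mathcal L^\pi(g)=0$ holds on $[0,u_1)$, the region $A_3$, at the reinsurance level $\pi^*$ of the relevant theorem.

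\emph{Step 1: making $u^*$ precise.} Set $\theta_i^*\equiv 1$, $C_1^*\equiv 0$, let $\pi^*$ be the reinsurance strategy of the applicable theorem, and construct $(C_2^*,L_1^*,L_2^*)$ as the solution of a Skorokhod-type problem for $(X_1,X_2)$: (i) if the initial state is in $A_1$, an immediate jump of $L_2^*$ transfers $x_1-u_1$ from Line~1 to Line~2; (ii) if the resulting state, or the initial state, is in $A_2$, an immediate jump of $C_2^*$ pays $x_1+x_2-u_1$ out of Line~2, placing the state on $\{x_1+x_2=u_1\}$; (iii) for $t>0$, $C_2^*$ increases only on $\{X_1+X_2=u_1\}$ so as to keep $X:=X_1+X_2\le u_1$, while $L_1^*$ and $L_2^*$ increase only on $\{X_1=0\}$ and $\{X_2=0\}$ respectively, so as to keep $\min\{X_1,X_2\}\ge 0$ for as long as $X\ge 0$. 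I would then check that this problem admits an adapted, nonnegative, nondecreasing, right-continuous solution, so $u^*\in\mathcal U$; that $\pi_1^*\in[0,M_1]$ and $\pi_2^*=\tfrac{\kappa_2}{\kappa_1}\pi_1^*\in[0,M_2]$ along the path (using $M_2/M_1\ge\kappa_2/\kappa_1$); and that, because the transfers cancel in $X_1+X_2$, the aggregate $X$ is a one-dimensional diffusion on $[0,u_1]$ driven on $\{X<u_1\}$ by drift $\kappa_1\mu_1(\pi_1^*)+\kappa_2\mu_2(\pi_2^*)$ and variance $\sigma_1^2(\pi_1^*)+\sigma_2^2(\pi_2^*)$, reflected downward at $u_1$ and absorbed at $0$, with $\tau=\inf\{t\ge 0:X(t)=0\}$ since the transfer rule rules out univariate ruin up to the instant $X$ hits zero.

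\emph{Step 2: the It\^o computation.} Apply It\^o's formula to $t\mapsto e^{-\delta t}g(X(t))$ under $u^*$ on $[0,\tau\wedge T]$, using $\partial_{x_i}V=g'$ and $\partial^2_{x_ix_j}V=g''$. The capital-injection terms drop out because they cancel in $d(X_1+X_2)$; on $\{X<u_1\}$ the drift term $e^{-\delta t}\mathcal L^{\pi^*}(g)(X)\,dt$ vanishes because $g$ was built on $[0,u_1)$ precisely so that $\mathcal L^{\pi^*}(g)=0$ with the HJB-maximising $\pi^*$; and the reflection at $u_1$ together with the dividend payments contribute $-\int_0^{\tau\wedge T}e^{-\delta t}g'(u_1)\,dC_2^*(t)=-(1-a)\int_0^{\tau\wedge T}e^{-\delta t}\,dC_2^*(t)$ since $g'(u_1)=1-a$. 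Because $X(t)\in[0,u_1]$ after the (at most one) initial jump, $g'$ is bounded and each $\sigma_i$ is bounded on $[0,M_i]$ along the path, so the stochastic integral is a true martingale of zero mean. Taking expectations and letting $T\to\infty$ — using $g(X(\tau))=g(0)=0$ on $\{\tau<\infty\}$, and $0\le e^{-\delta T}g(X(T))\le e^{-\delta T}g(u_1)\to 0$ on $\{\tau=\infty\}$ — gives, for an initial state with $x_1+x_2\le u_1$,
\begin{equation*}
0=g(x_1+x_2)-(1-a)\,\mathbb E\!\left[\int_0^{\tau}e^{-\delta t}\,dC_2^*(t)\right]=g(x_1+x_2)-J(x_1,x_2;u^*).
\end{equation*}
For an initial state with $x_1+x_2>u_1$, the same identity holds started from time $0^{+}$ (after the jump), and adding the initial dividend, whose discounted value is $(1-a)(x_1+x_2-u_1)=g(x_1+x_2)-g(u_1)$, again yields $J(x_1,x_2;u^*)=g(x_1+x_2)=V(x_1,x_2)$. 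This proves optimality of $u^*$, and the decomposition in Figure~\ref{fig:capitaltransfer unbdd} merely records where each of the three actions is triggered.

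\emph{Main obstacle.} I expect the substantive difficulty to lie in Step~1: showing that the coupled Skorokhod problem — reflecting the aggregate downward at $u_1$ while simultaneously using costless interline transfers to keep both components nonnegative — has a well-defined, adapted solution, and that under it the first-ruin time collapses to the hitting time of $0$ by $X$. Once the confinement $X(t)\in[0,u_1]$ is in hand, the remaining ingredients (the It\^o expansion, the martingale/localisation step, and the $T\to\infty$ limit) are routine.
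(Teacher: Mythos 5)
The paper never writes out an explicit proof of Theorem~\ref{thm: ubdd uniform strat}: Section~\ref{sec:proof} proves only Theorems~\ref{thm:bdd w0<u1<u2}--\ref{thm: bdd u1<u2<w0 or M infty} and Theorems~\ref{thm:ubdd M1 finite}--\ref{thm: unbdd M1 infty}, and the optimality of the singular dividend/injection policy in both the bounded case (Theorem~\ref{thm:op_L}) and the unbounded case (this theorem) is left to the cited verification lemma of \citet[Thm.~2.51, Prop.~2.54]{schmidli2007book}. Your proposal correctly reconstructs that verification argument and the reduction that makes it tractable: since $V(x_1,x_2)=g(x_1+x_2)$ with $\partial_{x_1}V=\partial_{x_2}V=g'$, the interline transfers are costless and cancel in $d(X_1+X_2)$, so the aggregate $X$ becomes a one-dimensional diffusion confined to $[0,u_1]$ by reflecting $C_2^*$ downward at $u_1$; the drift term vanishes on $\{X<u_1\}$ because $g$ solves $\sup_\pi\mathcal L^\pi g=0$ there; the boundary-local dividend term carries $g'(u_1)=1-a$; and the discounted-martingale plus $T\to\infty$ step gives $J(\cdot;u^*)=g$. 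The regions $A_1,A_2,A_3$ then simply record which of the three controls (transfer, dividend, none) is triggered, exactly as in the theorem statement.

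Your flagged ``main obstacle'' is accurately placed: the one non-routine piece is Step~1, showing the coupled Skorokhod problem (reflect the sum at $u_1$ while keeping each component nonnegative by costless transfers) has an adapted solution and that $\tau$ coincides with the hitting time of $0$ by $X$. Two small refinements worth keeping in mind when you formalize this. First, the ``at most one initial jump'' phrasing should be ``at most two'': starting in $A_1$ there is an $L_2^*$ jump to $A_2$ followed possibly by a $C_2^*$ jump to the line $\{x_1+x_2=u_1\}$, consistent with the theorem's two-step description. Second, because $\pi_1^*(x)=G^{-1}(x)\to 0$ as $x\to 0$, both the drift $\kappa_1\mu_1(\pi_1^*)+\kappa_2\mu_2(\pi_2^*)$ and the variance $\sigma_1^2(\pi_1^*)+\sigma_2^2(\pi_2^*)$ degenerate at the origin, so $0$ may be an inaccessible boundary for the reflected diffusion; the branch $\{\tau<\infty\}$ in your limit argument could be vacuous, which does no harm but is worth noting so that the passage to the limit is phrased without presuming the ruin time is finite with positive probability.
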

	
	\section{Numerical Examples}\label{sec:numerical}

    In this section, we present several numerical examples to gain further insight into the main results in Sections \ref{sec:bdd} and \ref{sec:ubdd}. Since the capital injection strategies were presented in Theorems \ref{thm:op_L} and \ref{thm: ubdd uniform strat} and the dividend payout strategy is a threshold strategy in the bounded-dividend case and a singular control in the unbounded-dividend case, we focus on illustrating the optimal (excess-of-loss) reinsurance strategies for each of the main results, along with their corresponding value functions.

    We fix the following parameters: $\kappa_1=4$, $\kappa_2=2$, $\delta=0.5$, and $a=0.3$. For the claim size distributions $F_1$ and $F_2$ in the bounded support case, we use uniform distributions on $[0,1]$ and $[0,1.5]$, respectively, which correspond to $M_1=1$ and $M_2=1.5$. In the unbounded support case $(M_1=M_2=\infty)$, we use exponential distributions with parameters $1$ and $1.5$. Figures \ref{fig:row1} to \ref{fig:row5} display the value functions on the left and the reinsurance strategies on the right. The vertical dotted lines represent the threshold levels.

   With these parameters, the optimal reinsurance (retention) level for Line 2 is exactly half that of Line 1: $\frac{\kappa_2}{\kappa_1}=\frac{1}{2}$. For uniformly distributed claim sizes, those in Lines 1 and 2 are capped at $M_1=1$ and $M_2=1.5$, respectively. This means that if the optimal excess-of-loss retention level for Line 1 is equal to 1, then the insurer retains all risk. For an exponential distribution, claim sizes have no upper bound and can take any value in $[0,\infty)$.

    Figures \ref{fig:row1} to \ref{fig:row3} correspond to the main results for the bounded dividend rates in Section \ref{sec:bdd}. The maximum dividend rates of the business lines vary, and their sum decreases noticeably across the three figures. Figures \ref{fig:row4} and \ref{fig:row5} correspond to the main results for the unbounded dividend rates in Section \ref{sec:ubdd}. The optimal excess-of-loss retention level increases as the aggregate reserve level increases across all examples. Equivalently, the amount of risk ceded to the reinsurer decreases as the aggregate reserve level increases.
    
    By definition, the reinsurance threshold level $w_0$ indicates that the insurer should retain all risks of Line 1 once the aggregate reserve level exceeds it. This is evident in Figures \ref{fig:row1}, \ref{fig:row2}, and \ref{fig:row4}, where the retention levels for both lines remain constant (i.e., flat) when the aggregate reserve level is greater than $w_0$. In the scenario where $w_0$ does not exist, as stated in Theorem \ref{thm: bdd u1<u2<w0 or M infty}, the insurer must always transfer a portion of the risk to the reinsurer, regardless of the aggregate reserve level. Figure \ref{fig:row3} illustrates this scenario, where the retained risk for Line 1 is capped at $M_0=0.71$.
    
    In Figure \ref{fig:row5}, it is worth noting that the level of retained risk increases without bound, with $x=u_1=1.25$ acting as a vertical asymptote for the reinsurance level. As stated in Theorem \ref{thm: ubdd uniform strat}, 
    the aggregate reserve level must remain below $u_1$ since any reserve above $u_1$ is allocated to dividends and capital transfers between lines.

    The value functions are all increasing and concave. For the bounded dividend rates (Figures \ref{fig:row1} to \ref{fig:row3}), they approach the limit $\frac{a\overline c_1+(1-a)\overline c_2}{\delta}$. In particular, the value function in Figure \ref{fig:row3} approaches this limit more quickly than in the other cases, attributed to the smaller values of $u_1$ and $u_2$.

    \begin{figure}
		\centering
		\begin{subfigure}{0.45\textwidth}
			\centering
			\includegraphics[width=\linewidth, trim = 0cm 0.5cm 1cm 2cm, clip = true]{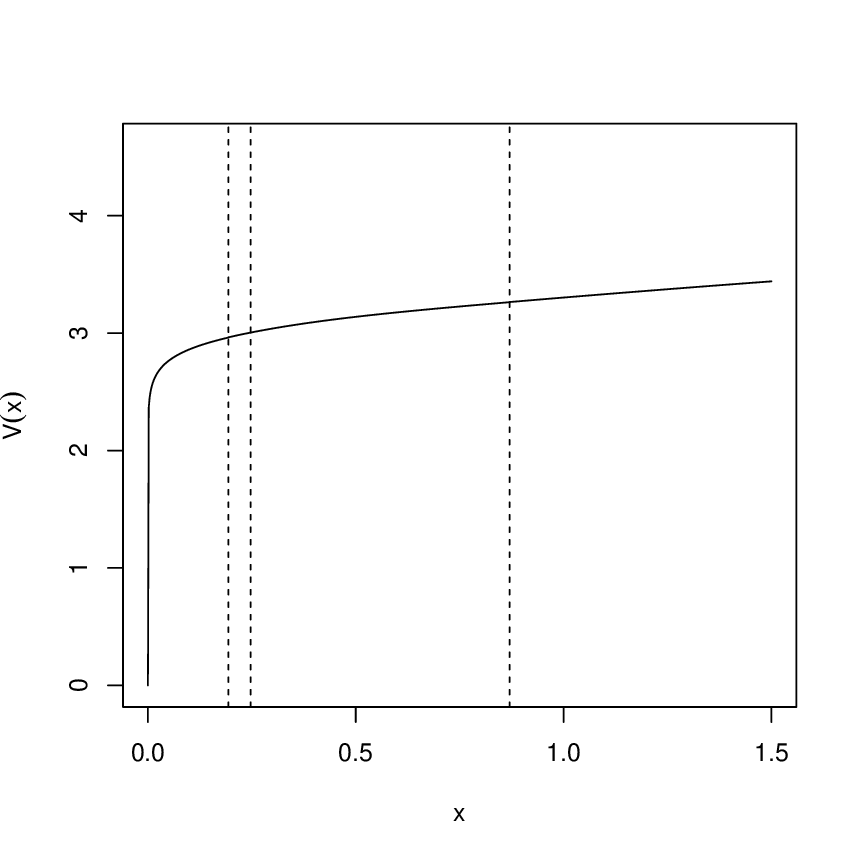} 
		\end{subfigure}
		\hfill
		\begin{subfigure}{0.45\textwidth}
			\centering
			\includegraphics[width=\linewidth, trim = 0cm 0.5cm 1cm 2cm, clip = true]{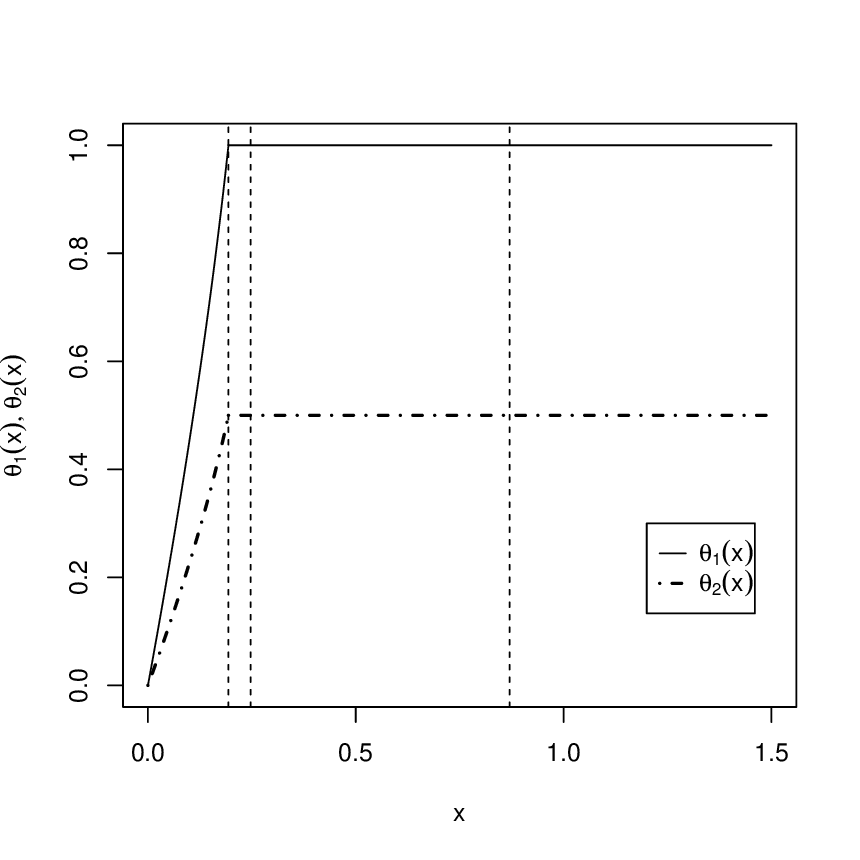}
		\end{subfigure}
		\\[-2ex]
		\caption{\centering Uniformly distributed claims: $\kappa_1=4$, $\kappa_2=2$, $\delta =0.5$, $a=0.3$, $\overline c_1=3$, $\overline c_2=2$ \\ ($M_1=1$, $M_2=1.5$,  $w_0=0.19 < u_1=0.24<u_2=0.87$, corresponding to Theorem \ref{thm:bdd w0<u1<u2})}
		\label{fig:row1}\vspace{-0.5cm}
	\end{figure}

    \begin{figure}
		\centering
		\begin{subfigure}{0.45\textwidth}
			\centering
			\includegraphics[width=\linewidth, trim = 0cm 0.5cm 1cm 2cm, clip = true]{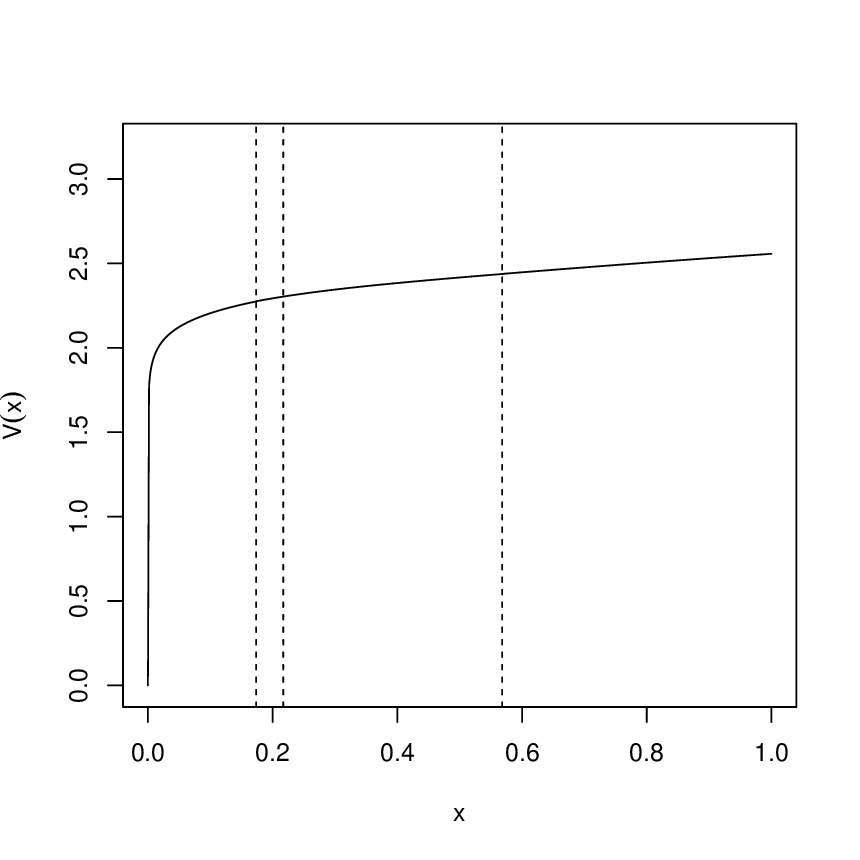} 
		\end{subfigure}
		\hfill
		\begin{subfigure}{0.45\textwidth}
			\centering
			\includegraphics[width=\linewidth, trim = 0cm 0.5cm 1cm 2cm, clip = true]{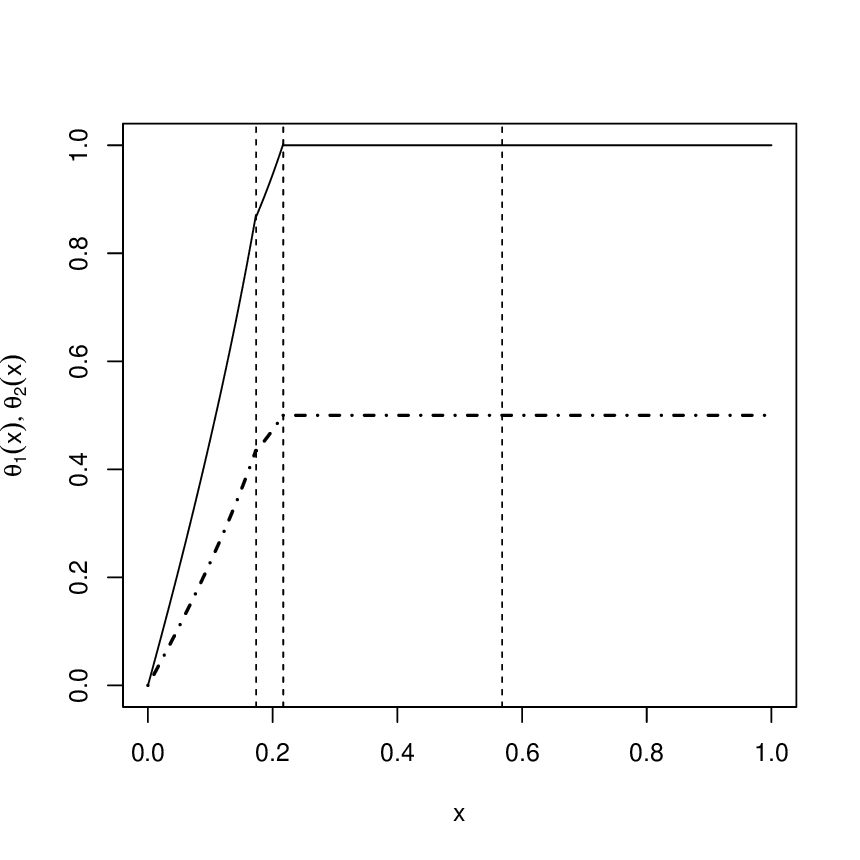}
		\end{subfigure}
		\\[-2ex]
		\caption{\centering Uniformly distributed claims: $\kappa_1=4$, $\kappa_2=2$, $\delta =0.5$, $a=0.3$, $\overline c_1=3$, $\overline c_2=1$ \\ ($M_1=1$, $M_2=1.5$,  $u_1=0.17 < w_0=0.22<u_2=0.57$, corresponding to Theorem \ref{thm:bdd u1<w0<u2})}
		\label{fig:row2}\vspace{-0.5cm}
	\end{figure}

    \begin{figure}
		\centering
		\begin{subfigure}{0.45\textwidth}
			\centering
			\includegraphics[width=\linewidth, trim = 0cm 0.5cm 1cm 2cm, clip = true]{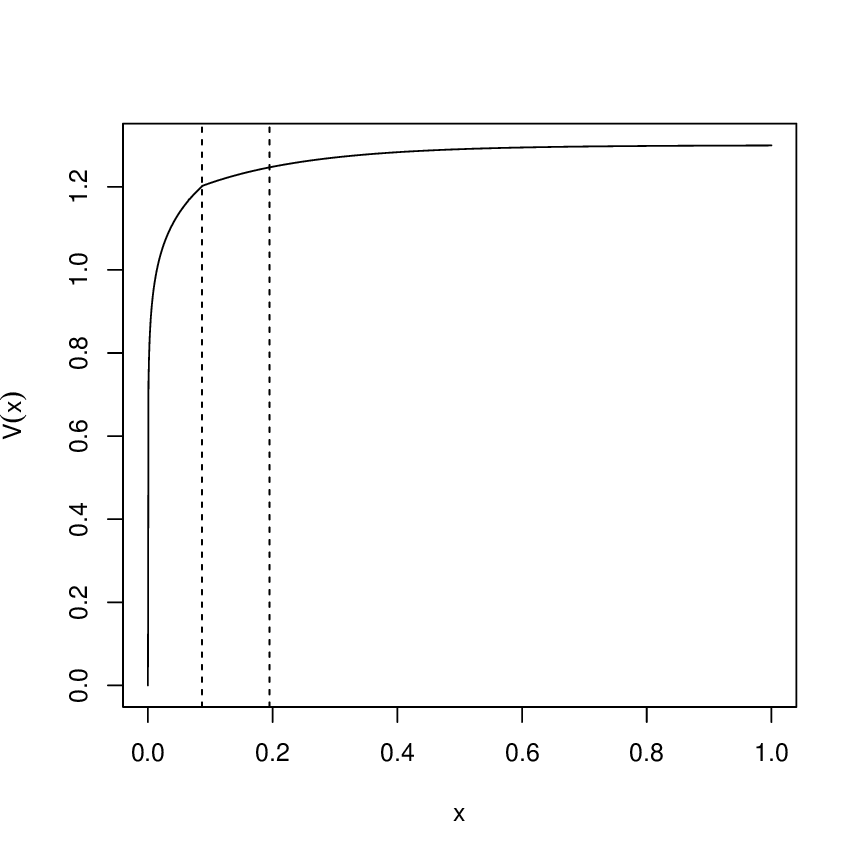} 
		\end{subfigure}
		\hfill
		\begin{subfigure}{0.45\textwidth}
			\centering
			\includegraphics[width=\linewidth, trim = 0cm 0.5cm 1cm 2cm, clip = true]{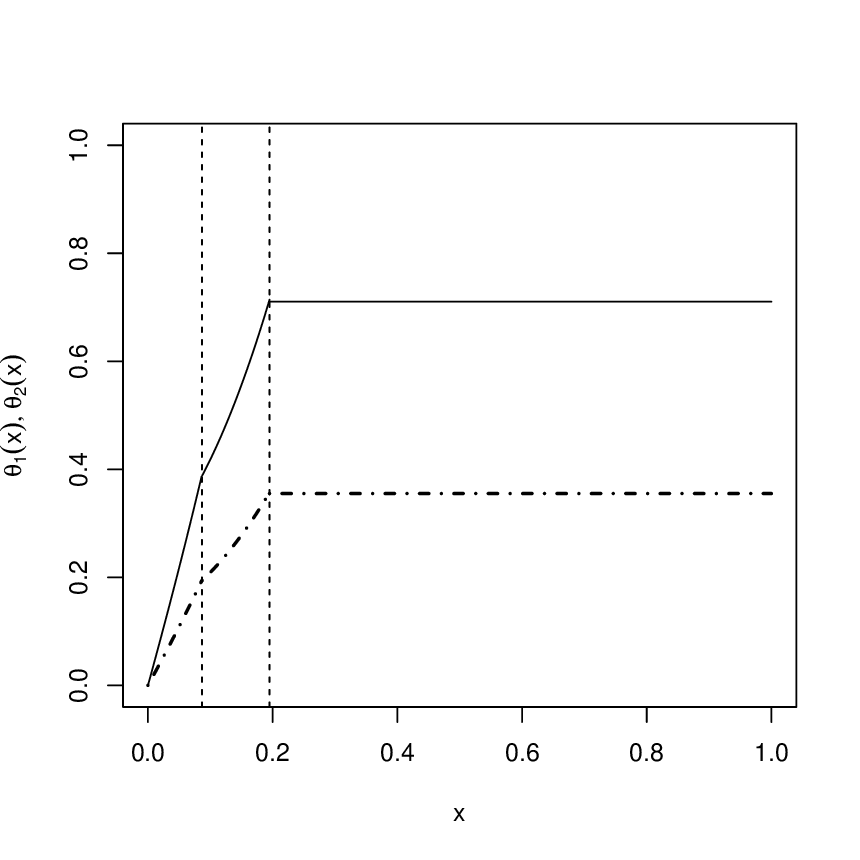}
		\end{subfigure}
		\\[-2ex]
		\caption{\centering Uniformly distributed claims: $\kappa_1=4$, $\kappa_2=2$, $\delta =0.5$, $a=0.3$, $\overline c_1=1$, $\overline c_2=0.5$ \\ ($M_0=0.71$, $M_1=1$, $M_2=1.5$,  $u_1=0.09 <u_2=0.19$, corresponding to Theorem \ref{thm: bdd u1<u2<w0 or M infty})}
		\label{fig:row3}\vspace{-0.5cm}
	\end{figure}
	
	\begin{figure}
		\centering
		\begin{subfigure}{0.45\textwidth}
			\centering
			\includegraphics[width=\linewidth, trim = 0cm 0.5cm 1cm 2cm, clip = true]{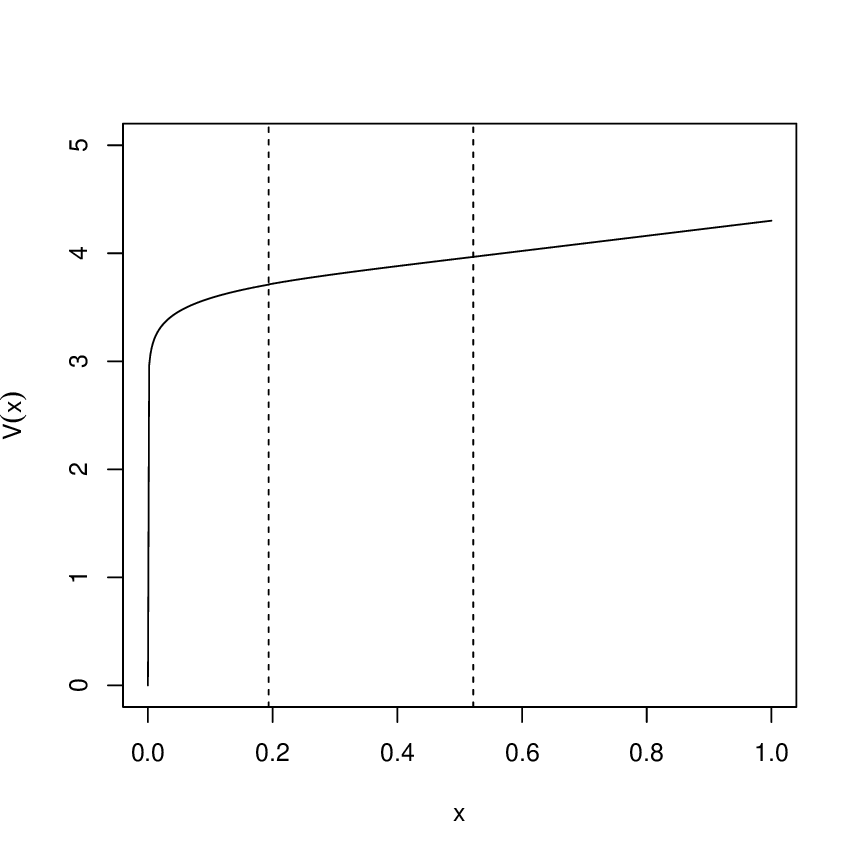} 
		\end{subfigure}
		\hfill
		\begin{subfigure}{0.45\textwidth}
			\centering
			\includegraphics[width=\linewidth, trim = 0cm 0.5cm 1cm 2cm, clip = true]{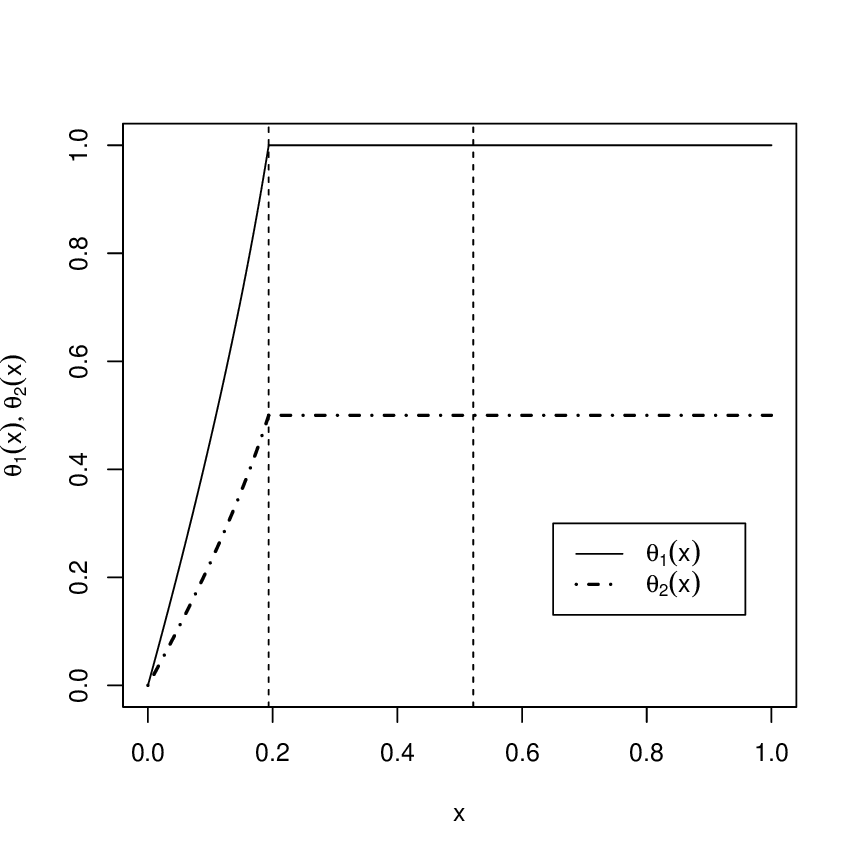}
		\end{subfigure}
		\\[-2ex]
		\caption{\centering Uniformly distributed claims: $\kappa_1=4$, $\kappa_2=2$, $\delta =0.5$, $a=0.3$\\ ($M_1=1$, $M_2=1.5$, $w_0=0.19 < u_1=0.52$, corresponding to Theorem \ref{thm:ubdd M1 finite})}
		\label{fig:row4}
	\end{figure}
	
	\begin{figure}[h]
		\centering
		\begin{subfigure}{0.45\textwidth}
			\centering
			\includegraphics[width=\linewidth, trim = 0cm 0.5cm 1cm 2cm, clip = true]{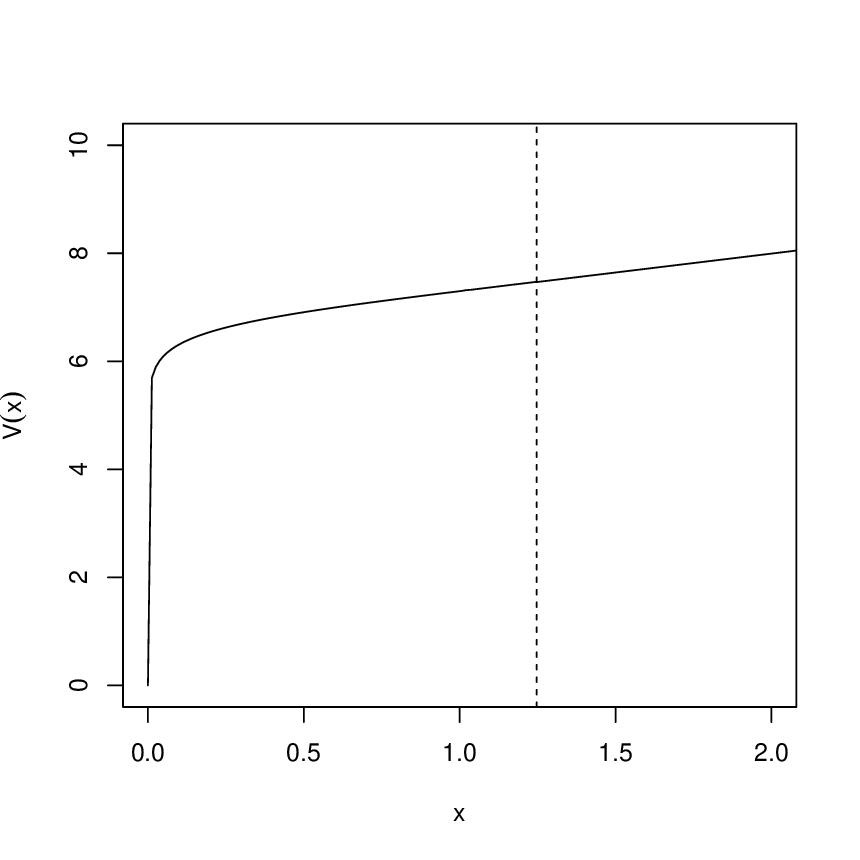} 
		\end{subfigure}
		\hfill
		\begin{subfigure}{0.45\textwidth}
			\centering
			\includegraphics[width=\linewidth, trim = 0cm 0.5cm 1cm 2cm, clip = true]{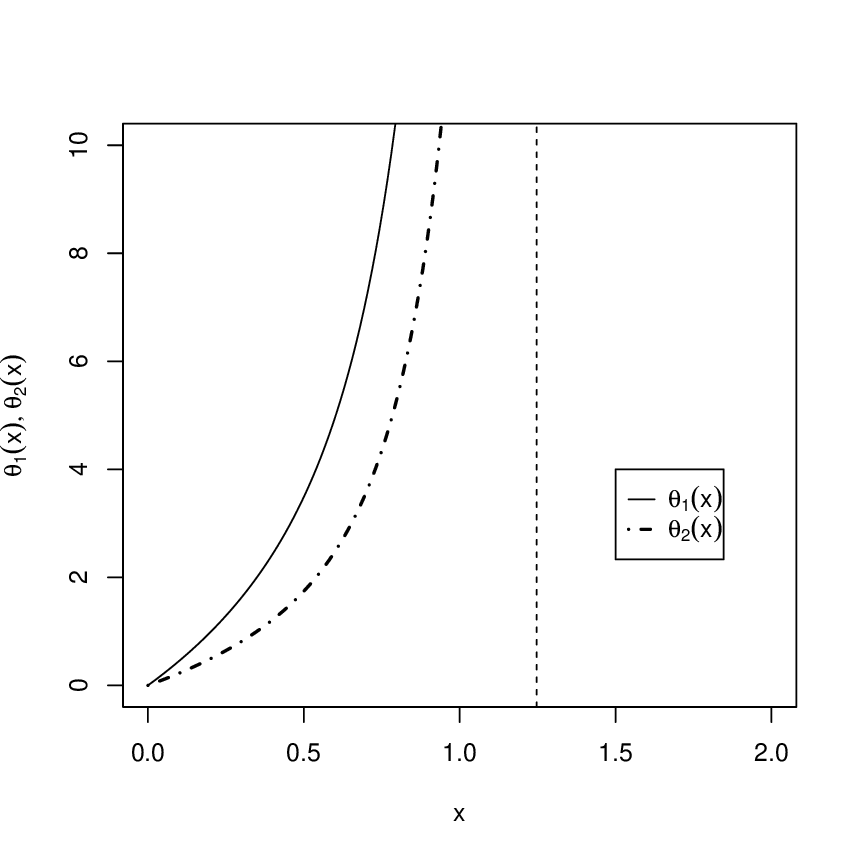}
		\end{subfigure}
		\\[-2ex]
	\caption{\centering Exponentially distributed claims: $\kappa_1=4$, $\kappa_2=2$, $\delta =0.5$, $a=0.3$\\ ($M_1=\infty$, $M_2=\infty$, $u_1=1.25$, corresponding to Theorem \ref{thm: unbdd M1 infty})}
		\label{fig:row5}\vspace{-0.5cm}
	\end{figure}

	\section{Proof of Main Results}\label{sec:proof}
	
	In this section, we provide the proofs of Theorems \ref{thm:bdd w0<u1<u2}, \ref{thm:bdd u1<w0<u2}, \ref{thm: bdd u1<u2<w0 or M infty}, \ref{thm:ubdd M1 finite}, and \ref{thm: unbdd M1 infty}.
	
	\subsection{Proof of Theorem \ref{thm:bdd w0<u1<u2}}
	
	\subsubsection*{Deriving the analytical solution}
	
	Suppose for now that $w_0\leq u_1\leq u_2$ and that $w_0$ exists. In the region $\{x<w_0\}$, we have $\pi^*_1(x)=\widehat \pi_1(x)$, where $\widehat \pi_1$ is given in \eqref{eqn:optimal pi}, and the corresponding HJB equation becomes
	\begin{equation}\label{eqn:hjb x<w0}
		\begin{aligned}
			0
			&=\left[\kappa_1\mu_1(\pi^*_1(x))+\kappa_2\mu_2\left(\frac{\kappa_2}{\kappa_1}\pi^*_1(x)\right)-\frac{\kappa_1}{2\pi^*_1(x)}\left(\sigma_1^2(\pi^*_1(x))+\sigma_2^2\left(\frac{\kappa_2}{\kappa_1}\pi^*_1(x)\right)\right)\right]g'(x)-\delta g(x).
		\end{aligned}
	\end{equation}
	Differentiating with respect to $x$ leads to
	\begin{equation}
		\begin{aligned}
			0
			&=\left[\frac{\kappa_1\frac{\dd \pi^*_1(x)}{\dd x}}{2 (\pi_1^*(x))^2}\left(\sigma_1^2( \pi^*_1(x))+\sigma_2^2\left(\frac{\kappa_2}{\kappa_1}\pi^*_1(x)\right)\right)-\delta\right]g'(x)\\
			&\quad + \left[\kappa_1\mu_1(\pi^*_1(x))+\kappa_2\mu_2\left(\frac{\kappa_2}{\kappa_1}\pi^*_1(x)\right)-\frac{\kappa_1}{2\pi^*_1(x)}\left(\sigma_1^2(\pi^*_1(x))+\sigma_2^2\left(\frac{\kappa_2}{\kappa_1}\pi^*_1(x)\right)\right)\right]g''(x).
		\end{aligned}
	\end{equation}
	Using \eqref{eqn:optimal pi} once more yields
	\begin{equation}
		0=\left[\frac{\kappa_1^2+\kappa_1\frac{\dd \pi^*_1(x)}{\dd x}}{2 (\pi^*_1(x))^2}\left(\sigma_1^2(\pi^*_1(x))+\sigma_2^2\left(\frac{\kappa_2}{\kappa_1}\pi^*_1(x)\right)\right)-\frac{\kappa_1^2}{\pi^*_1(x)}\mu_1(\pi^*_1(x))-\frac{\kappa_1\kappa_2}{\pi^*_1(x)}\mu_2\left(\frac{\kappa_2}{\kappa_1}\pi^*_1(x)\right)-\delta\right]g'(x).
	\end{equation}
	Since $g$ is assumed to be strictly increasing and concave, we obtain the following differential equation for $\pi^*_1$:
	\begin{equation*}
		\frac{\dd \pi^*_1(x)}{\dd x}=\frac{\dd G(y)}{\dd y}\Bigg|_{y=\pi^*_1(x)},
	\end{equation*}
	where $G$ is defined by \eqref{eqn:G}. It can be shown that $G$ is continuous and strictly increasing; hence, its inverse, denoted by $G^{-1}$, exists. Since $\pi^*_1(0)=0$, we then obtain 
	\begin{equation}\label{eqn:m=G^-1}
		\pi^*_1(x)=G^{-1}(x).
	\end{equation}
	Moreover, since $\frac{\dd \pi^*_1(x)}{\dd x}>0$ and $\pi^*_1(0)=0$, there exists an $x_{M_1}>0$ such that $\pi^*_1(x_{M_1})=M_1$. Choosing $w_0=x_{M_1}$ proves the existence of $w_0$ defined in \eqref{eqn:defn of w0} and that $w_0=G(M_1)$. From \eqref{eqn:optimal pi}, we can see that $-\frac{\kappa_1}{\pi^*_1(x)}=\frac{g''(x)}{g'(x)}=\frac{\dd}{\dd x}\ln g'(x)$. Hence, a solution to the HJB equation \eqref{eqn:hjb x<w0} that satisfies $g(0)=0$ is given by
	\begin{equation*}
		g(x)=K_1\int_0^x \exp\left[\int_{z}^{w_0}\frac{\kappa_1}{G^{-1}(y)}dy\right]dz,
	\end{equation*}
	where $K_1>0$ is an unknown constant.	
	
	By the definition of $w_0$ and the constraint $\pi_1\in[0,M_1]$, it follows that $\pi_1^*(x)=M_1$ for $x\geq w_0$. It is easy to see that $(\pi_1^*,\pi_2^*)(x)=\left(M_1,\frac{\kappa_2}{\kappa_1}M_1\right)$ since $\frac{M_2}{M_1}\geq \frac{\kappa_2}{\kappa_1}$ holds. Hence, in the region $\{w_0<x<u_1\}$, the HJB equation becomes
	\begin{equation}\label{eqn:hjb w0<x<u1}
		0=\frac{1}{2}N_2g''(x)+N_1g'(x)-\delta g(x),
	\end{equation}
	where $N_1=\overline N_1(M_1)$ and $N_2=\overline N_2(M_1)$ are defined in \eqref{eqn:notations}.	The solution is given by
	\begin{equation}\label{eqn:g_2}
		g_2(x)=K_{2+}e^{\gamma_{2+}(x-w_0)}+K_{2-}e^{\gamma_{2-}(x-w_0)},
	\end{equation}
	where $\gamma_{2\pm}:=\overline \gamma_{2\pm}(M_1)$ are given by \eqref{eqn:notations} and $K_{2\pm}$ are unknown constants.
	
	In the region $\{u_1<x<u_2\}$, the HJB equation becomes
	\begin{equation}\label{eqn:hjb 3}
		0=\frac{1}{2}N_2g''(x)+(N_1-\overline c_2)g'(x)-\delta g(x)+(1-a)\overline c_2,
	\end{equation}
	whose solution is given by
	\begin{equation}\label{eqn:g_3}
		g_3(x)=K_{3+}e^{\gamma_{3+}(x-u_1)}+K_{3-}e^{\gamma_{3-}(x-u_1)}+\frac{(1-a)\overline c_2}{\delta},
	\end{equation}
	where $\gamma_{3\pm}:=\overline \gamma_{3\pm}(M_1)$ are given by \eqref{eqn:notations} and $K_{3\pm}$ are unknown constants.

	In the region $\{x>u_2\}$, the HJB equation becomes
	\begin{equation}\label{eqn:hjb 4}
		0=\frac{1}{2}N_2g''(x)+(N_1-\overline c_1-\overline c_2)g'(x)-\delta g(x)+a\overline c_1+(1-a)\overline c_2,
	\end{equation}
	whose solution is given by
	\begin{equation}\label{eqn:g_4}
		g_4(x)=K_{4-}e^{\gamma_{4-}(x-u_2)}+\frac{a\overline c_1+(1-a)\overline c_2}{\delta},
	\end{equation}
	where $\gamma_{4-}:=\overline \gamma_{4-}(M_1)$ is defined in \eqref{eqn:notations}. We conjecture the following solution:
	\begin{equation}\label{eqn:conjecture g one}
		g(x)=
		\begin{cases}
			K_1\int_0^x \exp\left[\int^{w_0}_z\frac{\kappa_1}{G^{-1}(y)}dy\right]dz&\mbox{if $x<w_0$,}\\
			K_{2+}e^{\gamma_{2+}(x-w_0)}+K_{2-}e^{\gamma_{2-}(x-w_0)}&\mbox{if $w_0<x<u_1$,}\\
			K_{3+}e^{\gamma_{3+}(x-u_1)}+K_{3-}e^{\gamma_{3-}(x-u_1)}+\frac{(1-a)\overline c_2}{\delta}&\mbox{if $u_1<x<u_2$,}\\
			K_{4-}e^{\gamma_{4-}(x-u_2)}+\frac{a\overline c_1+(1-a)\overline c_2}{\delta}&\mbox{if $x>u_2$,}
		\end{cases}
	\end{equation}
	where $w_0=G(M_1)$ and $K_1,\, K_{2\pm},\, K_{3\pm}, \, K_{4-}, \, u_1,\, u_2$ are yet to be determined.
	
	We now solve for the unknowns using the principle of smooth fit. At $x=w_0$, we have the following system of equations:
	\begin{equation}\label{eqn:system at xM1 case 1}
		\begin{aligned}
			K_1&=K_{2+}\gamma_{2+}+K_{2-}\gamma_{2-}\\
			-K_1\frac{\kappa_1}{M_1}&=K_{2+}\gamma_{2+}^2+K_{2-}\gamma_{2-}^2,
		\end{aligned}
	\end{equation}
	whose solution is given by
	\begin{equation}
		K_{2+}=K_1\alpha_{2+}\quad\mbox{and}\quad K_{2-}=K_1\alpha_{2-},
	\end{equation}
	where $\alpha_{2\pm}$ are given in \eqref{eqn:alpha2pm}. Recall that $\gamma_{2-}<0<\gamma_{2+}$. It is then easy to see that $\alpha_{2-}<0$. To prove $\alpha_{2+}>0$, it suffices to show that $-\gamma_{2-}-\frac{\kappa_1}{M_1}>0$. This result is proved in the following lemma. 
	\begin{lemma}\label{lemma: k1/M1>-gamma2-}
		$			-\frac{\kappa_1}{\gamma_{2-}}<M_1$.
	\end{lemma}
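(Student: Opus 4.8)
The plan is to convert the claimed inequality into a statement about where the root $\gamma_{2-}=\overline\gamma_{2-}(M_1)$ sits relative to the point $-\kappa_1/M_1$, and then to settle that by a single sign evaluation of the quadratic that defines $\overline\gamma_{2\pm}$, invoking the elementary bound $\sigma_i^2(s)\le s\mu_i(s)$ already obtained in the proof of Lemma \ref{lemma:h is inc}.

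First I would recall that $\gamma_{2-}$ is the smaller root of the upward parabola $q(\gamma):=\tfrac12 N_2\gamma^2+N_1\gamma-\delta$, where $N_1=\overline N_1(M_1)>0$ and $N_2=\overline N_2(M_1)>0$. Since $q(0)=-\delta<0$, the roots straddle the origin, so $\gamma_{2-}<0<\gamma_{2+}$. Because $\gamma_{2-}<0$, multiplying the target inequality $-\kappa_1/\gamma_{2-}<M_1$ through by $\gamma_{2-}$ (and flipping) shows it is equivalent to $\gamma_{2-}<-\kappa_1/M_1$. Now $-\kappa_1/M_1<0<\gamma_{2+}$, and $q$ is negative precisely on the open interval $(\gamma_{2-},\gamma_{2+})$; hence $\gamma_{2-}<-\kappa_1/M_1$ holds if and only if $q\!\left(-\kappa_1/M_1\right)<0$. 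This reformulation is the only place where one must be careful about the direction of inequalities (dividing by the negative quantity $\gamma_{2-}$, and noting that $-\kappa_1/M_1$ lies to the left of $\gamma_{2+}$, so a negative value of $q$ there forces $-\kappa_1/M_1>\gamma_{2-}$ rather than the reverse).

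It then remains to check $q\!\left(-\kappa_1/M_1\right)=\tfrac{N_2\kappa_1^2}{2M_1^2}-\tfrac{N_1\kappa_1}{M_1}-\delta<0$, which after multiplying by $M_1/\kappa_1>0$ becomes $\tfrac{\kappa_1 N_2}{2M_1}<N_1+\tfrac{\delta M_1}{\kappa_1}$. Applying $\sigma_i^2(s)\le s\mu_i(s)$ with $s=M_1$ for Line $1$ and $s=\tfrac{\kappa_2}{\kappa_1}M_1$ for Line $2$ gives $N_2=\sigma_1^2(M_1)+\sigma_2^2\!\left(\tfrac{\kappa_2}{\kappa_1}M_1\right)\le M_1\mu_1(M_1)+\tfrac{\kappa_2 M_1}{\kappa_1}\mu_2\!\left(\tfrac{\kappa_2}{\kappa_1}M_1\right)=\tfrac{M_1}{\kappa_1}N_1$, so $\tfrac{\kappa_1 N_2}{2M_1}\le\tfrac{N_1}{2}<N_1\le N_1+\tfrac{\delta M_1}{\kappa_1}$. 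This yields $q\!\left(-\kappa_1/M_1\right)<0$ and hence the lemma. I do not expect a genuine obstacle: the argument is a short root-location/sign computation. As a by-product, the same bound $N_2\le \tfrac{M_1}{\kappa_1}N_1$ shows $N_1-\tfrac{\kappa_1 N_2}{2M_1}+\tfrac{\delta M_1}{\kappa_1}\ge\tfrac{N_1}{2}+\tfrac{\delta M_1}{\kappa_1}>0$, confirming that the threshold appearing in condition (ii) of Theorem \ref{thm:bdd w0<u1<u2} is positive.
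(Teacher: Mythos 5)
Your proof is correct, and it takes a genuinely different route from the paper's. The paper argues directly from the explicit root formula: it writes $-\kappa_1/\gamma_{2-}=\kappa_1 N_2/\bigl(N_1+\sqrt{N_1^2+2\delta N_2}\bigr)$, drops the square root to obtain $<\kappa_1 N_2/(2N_1)$, and then bounds $\kappa_1 N_2$ via the second-moment inequality (with a slightly loose factor $2$ on the Line-$2$ contribution) to reach $<M_1\bigl[N_1+\kappa_2\mu_2(\kappa_2 M_1/\kappa_1)\bigr]/(2N_1)<M_1$. You instead recast the claim as a root-location statement, $\gamma_{2-}<-\kappa_1/M_1$, and settle it by a single sign evaluation of the characteristic quadratic $q(\gamma)=\tfrac12 N_2\gamma^2+N_1\gamma-\delta$ at $\gamma=-\kappa_1/M_1$. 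Both arguments hinge on the same key inequality $\sigma_i^2(s)\le s\mu_i(s)$ established in the proof of Lemma~\ref{lemma:h is inc}; the difference is that you apply it uniformly to the two lines, which produces the cleaner structural bound $N_2\le M_1 N_1/\kappa_1$. This is slightly sharper than the paper's intermediate inequality, avoids any explicit square-root manipulation, and, as you point out, immediately gives the useful by-product that the threshold $N_1-\kappa_1 N_2/(2M_1)+\delta M_1/\kappa_1$ in condition~(ii) of Theorem~\ref{thm:bdd w0<u1<u2} is bounded below by $N_1/2+\delta M_1/\kappa_1>0$.
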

	\begin{proof}
		It holds that
		\begin{equation*}
			\widetilde\sigma_1^2=\sigma_1^2(M_1)=\int_0^{M_1}x^2\,\dd F_1(x)<M_1\int_0^{M_1}x \, \dd F_1(x)=M_1\mu_1(M_1)=M_1\widetilde \mu_1
		\end{equation*}
		and
		\begin{equation*}
			\sigma_2^2\left(\frac{M_1\kappa_2}{\kappa_1}\right)=\int_0^{\frac{M_1\kappa_2}{\kappa_1}}x^2\,\dd F_2(x)<\frac{2M_1\kappa_2}{\kappa_1}\int_0^{\frac{M_1\kappa_2}{\kappa_1}}x \, \dd F_2(x)=\frac{2M_1\kappa_2}{\kappa_1}\mu_2\left(\frac{M_1\kappa_2}{\kappa_1}\right),
		\end{equation*}
		Then,
		\begin{align*}
			-\frac{\kappa_1}{\gamma_{2-}}
			=\frac{\kappa_1N_2}{N_1+ \sqrt{N_1^2+2\delta N_2 }}<\frac{\kappa_1N_2}{2N_1}<\frac{M_1\left[N_1+\kappa_2\mu_2\left(\frac{M_1\kappa_2}{\kappa_1}\right)\right]}{2N_1}<M_1,
		\end{align*}
		which proves the result.
	\end{proof} We can then rewrite $g_2$ as
	\begin{equation*}
		g_2(x)=K_1\left[\alpha_{2+}e^{\gamma_{2+}(x-w_0)}+\alpha_{2-}e^{\gamma_{2-}(x-w_0)}\right],
	\end{equation*}
	where $K_1>0$ is still unknown.
	
	By the definition of $u_1$ in \eqref{eqn:defn of u1 & u2}, we have $g'(u_1)=1-a$. Hence,
	\begin{equation*}
		1-a=g'_3(u_1)=K_{3+}\gamma_{3+}+K_{3-}\gamma_{3-},
	\end{equation*}
	which is equivalent to $K_{3+}$ defined in \eqref{eqn:K3+}. It suffices to show that $g_2(u_1)=g_3(u_1)$ and $g'_2(u_1)=g'_3(u_1)$ to ensure that $g$ is twice continuously differentiable at $x=u_1$. We then have the following system of equations:
	\begin{equation}\label{eqn:system at u1 bb}
		\begin{aligned}
			K_1\left[\alpha_{2+}e^{\gamma_{2+}(u_1-w_0)}+\alpha_{2-}e^{\gamma_{2-}(u_1-w_0)}\right]&=(1-a)\alpha_3\\
			K_1\left[\alpha_{2+}\gamma_{2+}e^{\gamma_{2+}(u_1-w_0)}+\alpha_{2-}\gamma_{2-}e^{\gamma_{2-}(u_1-w_0)}\right]&=(1-a),
		\end{aligned}
	\end{equation}
	where $\alpha_3$ is defined in \eqref{eqn:K3+}. From the second equation in \eqref{eqn:system at u1 bb}, we obtain $K_1$ defined in \eqref{eqn:K1 w0<u1<u2}. Dividing the first equation in \eqref{eqn:system at u1 bb} by the second equation yields
	\begin{equation*}
		\frac{\alpha_{2+}e^{\gamma_{2+}(u_1-w_0)}+\alpha_{2-}e^{\gamma_{2-}(u_1-w_0)}}{\alpha_{2+}\gamma_{2+}e^{\gamma_{2+}(u_1-w_0)}+\alpha_{2-}\gamma_{2-}e^{\gamma_{2-}(u_1-w_0)}}=\alpha_3,
	\end{equation*}
	which is equivalent to $u_1$ defined in \eqref{eqn:u1 and u2 w0<u1<u2}. We point out that we have yet to establish that $u_1$ is well defined.
	
	By the definition of $u_2$, we have $g'(u_2)=a$. Hence,
	\begin{equation*}
		a=g'_4(u_2)=K_{4-}\alpha_{4-},
	\end{equation*}
	or, equivalently,
	\begin{equation}
		K_{4-}=\frac{a}{\gamma_{4-}}.
	\end{equation}
	It suffices to show that $g'_3(u_2)=g'_4(u_2)$ and $g''_3(u_2)=g''_4(u_2)$ to ensure that $g$ is twice continuously differentiable at $x=u_2$. We then have the following system of equations:
	\begin{equation}\label{eqn:system at x=u2}
		\begin{aligned}
			K_{3+}\gamma_{3+}e^{\gamma_{3+}(u_2-u_1)}+K_{3-}\gamma_{3-}e^{\gamma_{3-}(u_2-u_1)}&=a\\
			K_{3+}\gamma_{3+}^2e^{\gamma_{3+}(u_2-u_1)}+K_{3-}\gamma_{3-}^2e^{\gamma_{3-}(u_2-u_1)}&=a\gamma_{4-}.
		\end{aligned}
	\end{equation}
	Dividing the second equation by the first equation yields
	\begin{equation*}
		\frac{K_{3+}\gamma_{3+}^2e^{\gamma_{3+}(u_2-u_1)}+K_{3-}\gamma_{3-}^2e^{\gamma_{3-}(u_2-u_1)}}{K_{3+}\gamma_{3+}e^{\gamma_{3+}(u_2-u_1)}+K_{3-}\gamma_{3-}e^{\gamma_{3-}(u_2-u_1)}}=\gamma_{4-},
	\end{equation*}
	which is equivalent to $u_2$ defined in \eqref{eqn:u1 and u2 w0<u1<u2}, where $K_{3-}$ is still unknown. Thus, we have obtained the form of the value function in \eqref{eqn:g w0<u1<u2}. The next steps are (i) to establish that the formulas for $u_1$ and $u_2$ in \eqref{eqn:u1 and u2 w0<u1<u2} are well defined, (ii) to guarantee that $w_0\leq u_1\leq u_2$, and (iii) to show that $g$ is increasing and concave. 
	
	\subsubsection*{Establishing the bounds for $K_{3-}$}
	
	We now establish the bounds for $K_{3-}$. Since the candidate value function $g$ must be positive for $x>0$, we must have $\alpha_3>0$ from the first equation in \eqref{eqn:system at u1 bb}. Since $\alpha_{2-}<0<\alpha_{2+}$, we must have $1-\gamma_{2+}\alpha_3>0$ for $u_1$ in \eqref{eqn:u1 and u2 w0<u1<u2} to be well defined. Combining these inequalities for $\alpha_3$ yields $0<\alpha_3<\frac{1}{\gamma_{2+}}$, which is equivalent to
	\begin{equation}\label{eqn:alpha 1st ineq}
		\underline{\alpha}<K_{3-}<\frac{(1-a)\gamma_{3+}}{\gamma_{3+}-\gamma_{3-}}\left(\frac{1}{\gamma_{2+}}-\frac{1}{\gamma_{3+}}-\frac{\overline{c}_2}{\delta}\right)=:\overline{\alpha},
	\end{equation}
	where $\underline\alpha$ is defined in \eqref{eqn:alpha w0<u1<u2}. It is clear that $\underline\alpha<\overline\alpha$ since $\frac{1}{\gamma_{2+}}>0$. 
	
We have now established conditions under which \eqref{eqn:u1 and u2 w0<u1<u2} is well defined; however, these conditions do not guarantee that $w_0\leq u_1$. The following lemma provides necessary and sufficient conditions for $\alpha_0$, defined in \eqref{eqn:alpha w0<u1<u2}, to be a lower bound of $K_{3-}$ that is greater than $\underline{\alpha}$.
	\begin{lemma}\label{lemma:alpha0, ul alpha, ol alpha}
		$\alpha_0 > \underline{\alpha}$ if and only if 
		\begin{equation*}
			N_1- \frac{\kappa_1N_2}{2M_1}>0.
		\end{equation*}
	\end{lemma}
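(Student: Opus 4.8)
The plan is to reduce the equivalence to a single sign computation. First I would write out $\alpha_0$ and $\underline\alpha$ from \eqref{eqn:alpha w0<u1<u2} and form the difference $\alpha_0-\underline\alpha$. Both quantities carry the common factor $\frac{(1-a)\gamma_{3+}}{\gamma_{3+}-\gamma_{3-}}$, and inside the bracket of $\alpha_0$ the terms $-\frac1{\gamma_{3+}}-\frac{\overline{c}_2}{\delta}$ cancel exactly against the terms $+\frac1{\gamma_{3+}}+\frac{\overline{c}_2}{\delta}$ contributed by $-\underline\alpha$. This leaves
\[
\alpha_0-\underline\alpha=\frac{(1-a)\gamma_{3+}}{\delta(\gamma_{3+}-\gamma_{3-})}\left(N_1-\frac{\kappa_1 N_2}{2M_1}\right),
\]
so the whole lemma comes down to checking that the prefactor $\frac{(1-a)\gamma_{3+}}{\delta(\gamma_{3+}-\gamma_{3-})}$ is strictly positive.

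To do that I would argue as follows. We have $\delta>0$ by assumption, and $1-a>0$ because we work under the standing assumption $a\le\frac12$. It remains to see that $\gamma_{3+}>0$ and $\gamma_{3+}-\gamma_{3-}>0$. Recall $N_2=\overline N_2(M_1)=\sigma_1^2(M_1)+\sigma_2^2(\kappa_2 M_1/\kappa_1)$; since $M_1>0$ (the claim-size distribution $F_1$ is nondegenerate, as $\widetilde\mu_1>0$ and $\kappa_1>0$ force $p_1>0$) and $\sigma_i^2(\cdot)$ is strictly positive on $(0,\infty)$, we get $N_2>0$. Plugging this into the definition of $\overline\gamma_{3\pm}(M_1)$ in \eqref{eqn:notations}, the discriminant $(N_1-\overline{c}_2)^2+2\delta N_2$ is strictly larger than $(N_1-\overline{c}_2)^2$, so $\gamma_{3+}>0>\gamma_{3-}$, and in particular $\gamma_{3+}-\gamma_{3-}>0$. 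Hence the prefactor is positive, and $\alpha_0-\underline\alpha$ has the same sign as $N_1-\frac{\kappa_1 N_2}{2M_1}$; the claimed equivalence follows at once.

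I do not expect a genuine obstacle here: the cancellation is exact by construction, and the signs of $\gamma_{3+}$ and of $\gamma_{3+}-\gamma_{3-}$ are an elementary consequence of $N_2>0$. The only point worth a moment's care is that $N_2>0$ genuinely holds, i.e.\ that $F_1$ is not the degenerate distribution at $0$ with $M_1=0$ — but this is already excluded by the model assumptions (finite positive moments and positive safety loading). Since $\underline\alpha$ has also been shown in \eqref{eqn:alpha 1st ineq} to be the true lower bound of $K_{3-}$, this lemma is exactly what is needed to decide, in the first branch of the definition of $\alpha_{LB}$ in \eqref{eqn:alpha w0<u1<u2}, whether the sharper bound $\alpha_0$ is admissible.
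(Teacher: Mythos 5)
Your proof is correct and follows essentially the same route as the paper's, which simply notes that comparing the bracketed expressions in \eqref{eqn:alpha w0<u1<u2} reduces the inequality $\alpha_0>\underline\alpha$ to $\frac{N_1}{\delta}-\frac{\kappa_1 N_2}{2\delta M_1}>0$. You spell out the cancellation of the $\frac{1}{\gamma_{3+}}+\frac{\overline c_2}{\delta}$ terms and the positivity of the common prefactor $\frac{(1-a)\gamma_{3+}}{\delta(\gamma_{3+}-\gamma_{3-})}$ (via $1-a>0$, $\delta>0$, $N_2>0$ hence $\gamma_{3-}<0<\gamma_{3+}$), which the paper leaves implicit; the substance is identical.
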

	\begin{proof}
		From \eqref{eqn:alpha 1st ineq} and \eqref{eqn:alpha w0<u1<u2}, it follows that $\alpha_0>\underline \alpha$ if and only if $\frac{N_1}{\delta} - \frac{\kappa_1N_2}{2\delta M_1}>0$. The result follows directly.
	\end{proof}
	The following lemma proves that $\alpha_0$ does not exceed the upper bound $\overline\alpha$.
	\begin{lemma}\label{lemma:alpha0 < ol alpha}
		$\alpha_0 < \overline{\alpha}$.
	\end{lemma}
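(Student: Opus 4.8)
The plan is to cancel the common prefactor shared by $\alpha_0$ and $\overline\alpha$ and reduce the claim to an elementary inequality between a linear expression and a square root. Comparing the definitions in \eqref{eqn:alpha w0<u1<u2} and \eqref{eqn:alpha 1st ineq}, both constants carry the factor $\tfrac{(1-a)\gamma_{3+}}{\gamma_{3+}-\gamma_{3-}}$, which is strictly positive: $a\le\tfrac12$ gives $1-a>0$, while $\gamma_{3+}>0>\gamma_{3-}$ (which hold by the same argument that gives $\gamma_{2-}<0<\gamma_{2+}$) give $\gamma_{3+}>0$ and $\gamma_{3+}-\gamma_{3-}>0$. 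Dividing $\alpha_0<\overline\alpha$ by this factor and cancelling the shared $-\tfrac{1}{\gamma_{3+}}-\tfrac{\overline c_2}{\delta}$ terms, the inequality reduces to $\tfrac{N_1}{\delta}-\tfrac{\kappa_1 N_2}{2\delta M_1}<\tfrac{1}{\gamma_{2+}}$, i.e.\ (multiplying by $\delta>0$) to $N_1-\tfrac{\kappa_1 N_2}{2M_1}<\tfrac{\delta}{\gamma_{2+}}$.

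Next I would rewrite $\tfrac{1}{\gamma_{2+}}$ in a form that makes the comparison transparent. From \eqref{eqn:notations}, $\gamma_{2+}=\gamma_{2+}(M_1)=\tfrac{-N_1+\sqrt{N_1^2+2\delta N_2}}{N_2}$; rationalising the reciprocal, by multiplying numerator and denominator by $N_1+\sqrt{N_1^2+2\delta N_2}$ and using $\bigl(\sqrt{N_1^2+2\delta N_2}\bigr)^2-N_1^2=2\delta N_2$, yields
\[
\frac{1}{\gamma_{2+}}=\frac{N_1+\sqrt{N_1^2+2\delta N_2}}{2\delta}.
\]
Substituting this and multiplying through by $2\delta>0$, the target inequality becomes
\[
N_1-\frac{\kappa_1 N_2}{M_1}<\sqrt{N_1^2+2\delta N_2}.
\]

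To finish, I would simply observe that $N_1,N_2,\delta,\kappa_1,M_1$ are all strictly positive --- indeed $N_1=\kappa_1\mu_1(M_1)+\kappa_2\mu_2\bigl(\tfrac{\kappa_2}{\kappa_1}M_1\bigr)>0$ and $N_2=\sigma_1^2(M_1)+\sigma_2^2\bigl(\tfrac{\kappa_2}{\kappa_1}M_1\bigr)>0$ since $\mu_i$ and $\sigma_i^2$ are positive on $(0,\infty)$ --- so $N_1-\tfrac{\kappa_1 N_2}{M_1}\le N_1<\sqrt{N_1^2+2\delta N_2}$, with the last step strict because $2\delta N_2>0$. This establishes $\alpha_0<\overline\alpha$. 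There is no genuine obstacle: the only points needing a little care are tracking the signs of $\gamma_{2+}$ and $\gamma_{3\pm}$ when dividing through, and recognising that rationalising $1/\gamma_{2+}$ is precisely what turns the inequality into the obvious bound $N_1-\tfrac{\kappa_1N_2}{M_1}<\sqrt{N_1^2+2\delta N_2}$.
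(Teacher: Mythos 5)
Your proof is correct and takes a genuinely cleaner route than the paper's. Both arguments begin identically --- cancel the positive prefactor $\tfrac{(1-a)\gamma_{3+}}{\gamma_{3+}-\gamma_{3-}}$ and the shared $-\tfrac{1}{\gamma_{3+}}-\tfrac{\overline c_2}{\delta}$ terms to reduce the claim to $\tfrac{N_1}{\delta}-\tfrac{\kappa_1 N_2}{2\delta M_1}<\tfrac{1}{\gamma_{2+}}$ --- but diverge in how they handle the $\gamma_{2+}$ term. The paper clears $\gamma_{2+}$ by multiplying through by $\sqrt{N_1^2+2\delta N_2}-N_1$, which leaves the factor $2N_1-\tfrac{\kappa_1 N_2}{M_1}$ of indeterminate sign on the left; this forces a case split on $N_1\gtrless\tfrac{\kappa_1 N_2}{2M_1}$ and a subsequent squaring step. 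You instead rationalise $1/\gamma_{2+}$ to the closed form $\tfrac{N_1+\sqrt{N_1^2+2\delta N_2}}{2\delta}$, which collapses the target to the single, trivially-true inequality $N_1-\tfrac{\kappa_1 N_2}{M_1}<\sqrt{N_1^2+2\delta N_2}$ with no case analysis and no squaring. The two endpoints are in fact algebraically equivalent (divide the paper's displayed inequality once more by $\sqrt{N_1^2+2\delta N_2}-N_1>0$ to get yours), so nothing is lost; your version simply makes the positivity manifest from the outset. A minor stylistic point: your first comparison $N_1-\tfrac{\kappa_1 N_2}{M_1}\le N_1$ is actually strict, since $\kappa_1,N_2,M_1>0$, though as you note the strictness of the full chain is already secured by $2\delta N_2>0$.
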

	\begin{proof}
		We can rewrite $\frac{N_1}{\delta}-\frac{\kappa_1 N_2}{2\delta M_1}<\frac{1}{\gamma_{2+}}$ as
		\begin{equation*}
			\left(2N_1-\frac{\kappa_1}{M_1}N_2\right)\left(\sqrt{N_1^2+2\delta N_2}-N_1\right)<2\delta N_2,
		\end{equation*}
		which immediately holds if $N_1<\frac{\kappa_1 N_2}{2M_1}$. Suppose $N_1\geq\frac{\kappa_1 N_2}{2M_1}$. We can then rewrite the above inequality as
		\begin{equation*}
			\left(2N_1-\frac{\kappa_1}{M_1}N_2\right)\sqrt{N_1^2+2\delta N_2}<2\delta N_2+2N_1^2-\frac{\kappa_1}{M_1}N_1N_2.
		\end{equation*}
		Squaring both sides yields $N_1- \frac{\kappa_1N_2}{2M_1}+\frac{\delta M_1}{\kappa_1}>0$, which proves the result.
	\end{proof}

	The following result gives a necessary and sufficient condition to ensure that $w_0\leq u_1$.
	\begin{lemma}\label{lemma on xM1<u1}
		$w_0\leq u_1$ if and only if
		\begin{equation}\label{eqn:alpha 2nd ineq}
			\underline{\alpha}+\left(\alpha_0-\underline{\alpha}\right)\cdot\mathds{1}_{\{N_1>\frac{\kappa_1 N_2}{2M_1}\}}<K_{3-}<\overline{\alpha}.
		\end{equation}
	\end{lemma}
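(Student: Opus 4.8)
The plan is to exploit the closed form of $u_1$ in \eqref{eqn:u1 and u2 w0<u1<u2} to turn the inequality $w_0\le u_1$ into an elementary condition first on $\alpha_3$ and then on $K_{3-}$. Since $\gamma_{2-}<0<\gamma_{2+}$, the factor $(\gamma_{2+}-\gamma_{2-})^{-1}$ appearing in \eqref{eqn:u1 and u2 w0<u1<u2} is positive, so I would first observe that $w_0\le u_1$ is equivalent to the logarithm's argument being $\ge 1$, i.e.\ to $\alpha_{2-}(\gamma_{2-}\alpha_3-1)\ge \alpha_{2+}(1-\gamma_{2+}\alpha_3)$. Before rearranging I would record the signs: under the range $\underline{\alpha}<K_{3-}<\overline{\alpha}$ already established in \eqref{eqn:alpha 1st ineq} — equivalently $0<\alpha_3<1/\gamma_{2+}$ — we have $\gamma_{2-}\alpha_3-1<0$, $1-\gamma_{2+}\alpha_3>0$, and, recalling $\alpha_{2-}<0<\alpha_{2+}$, the right-hand side $\alpha_{2+}(1-\gamma_{2+}\alpha_3)$ is strictly positive, so cross-multiplying is legitimate and preserves the inequality. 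Collecting the $\alpha_3$-terms and using the identity $\alpha_{2+}\gamma_{2+}+\alpha_{2-}\gamma_{2-}=1$ — which is exactly the first equation of the smooth-fit system \eqref{eqn:system at xM1 case 1} divided by $K_1$ — the condition collapses to the clean inequality $\alpha_3\ge \alpha_{2+}+\alpha_{2-}$.

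Next I would evaluate $\alpha_{2+}+\alpha_{2-}$ explicitly. Putting the two expressions in \eqref{eqn:alpha2pm} over the common denominator $\gamma_{2+}\gamma_{2-}(\gamma_{2+}-\gamma_{2-})$, the $(\gamma_{2+}-\gamma_{2-})$ factor cancels and leaves $\alpha_{2+}+\alpha_{2-}=(\gamma_{2+}+\gamma_{2-}+\kappa_1/M_1)/(\gamma_{2+}\gamma_{2-})$; since $\gamma_{2\pm}$ are the roots of $N_2\gamma^2+2N_1\gamma-2\delta=0$, Vieta's formulas give $\gamma_{2+}+\gamma_{2-}=-2N_1/N_2$ and $\gamma_{2+}\gamma_{2-}=-2\delta/N_2$, whence $\alpha_{2+}+\alpha_{2-}=\frac{N_1}{\delta}-\frac{\kappa_1 N_2}{2\delta M_1}$. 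Thus $w_0\le u_1$ holds iff $\alpha_3\ge \frac{N_1}{\delta}-\frac{\kappa_1 N_2}{2\delta M_1}$. Since \eqref{eqn:K3+} shows $\alpha_3$ is a strictly increasing affine function of $K_{3-}$ (its slope $\frac{\gamma_{3+}-\gamma_{3-}}{\gamma_{3+}(1-a)}$ is positive because $\gamma_{3-}<0<\gamma_{3+}$ and $a<1$), this becomes a lower bound on $K_{3-}$. I would then split on the sign of $N_1-\frac{\kappa_1 N_2}{2M_1}$: if it is $\le 0$, the right-hand side is $\le 0<\alpha_3$, so $w_0\le u_1$ holds automatically on all of $(\underline{\alpha},\overline{\alpha})$ and the indicator in \eqref{eqn:alpha 2nd ineq} correctly vanishes; if it is $>0$, solving $\alpha_3\ge \frac{N_1}{\delta}-\frac{\kappa_1 N_2}{2\delta M_1}$ for $K_{3-}$ reproduces precisely the threshold $\alpha_0$ of \eqref{eqn:alpha w0<u1<u2}, and the indicator equals $1$ with $\underline{\alpha}+(\alpha_0-\underline{\alpha})=\alpha_0$.

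To finish I would invoke the two preceding lemmas to confirm the resulting interval is consistent and nonempty: Lemma \ref{lemma:alpha0, ul alpha, ol alpha} gives $\alpha_0>\underline{\alpha}$ exactly when $N_1>\frac{\kappa_1 N_2}{2M_1}$ (so the strengthened lower endpoint genuinely lies inside the range from \eqref{eqn:alpha 1st ineq}), and Lemma \ref{lemma:alpha0 < ol alpha} gives $\alpha_0<\overline{\alpha}$; assembling the two cases then yields \eqref{eqn:alpha 2nd ineq}. The step I expect to be most delicate is not any single computation but the sign bookkeeping in the first paragraph — verifying the logarithm's argument is positive and that cross-multiplication does not flip the inequality — together with the minor endpoint discrepancy that $K_{3-}=\alpha_0$ gives $w_0=u_1$ rather than a strict inequality; I would reconcile the latter either by continuity or by noting that the eventual selection of $K_{3-}$ as the root of $\psi$ in its prescribed subinterval does not attain this boundary value.
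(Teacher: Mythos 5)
Your proof is correct and follows essentially the same route as the paper's: reduce $w_0\le u_1$ to the condition that the logarithm argument in \eqref{eqn:u1 and u2 w0<u1<u2} is at least $1$, rewrite this as $\alpha_3\ge \frac{N_1}{\delta}-\frac{\kappa_1 N_2}{2\delta M_1}$ via the sign information coming from $\underline\alpha<K_{3-}<\overline\alpha$, convert to a lower bound on $K_{3-}$ using the affine relation \eqref{eqn:K3+}, and then appeal to Lemma \ref{lemma:alpha0, ul alpha, ol alpha} to sort out which of $\alpha_0$ and $\underline\alpha$ is the binding lower endpoint. You fill in the algebra (the identity $\alpha_{2+}\gamma_{2+}+\alpha_{2-}\gamma_{2-}=1$ and the Vieta computation giving $\alpha_{2+}+\alpha_{2-}=\frac{N_1}{\delta}-\frac{\kappa_1 N_2}{2\delta M_1}$) that the paper compresses into a single ``we can rewrite'' step, and your observation about the strict-versus-nonstrict endpoint $K_{3-}=\alpha_0$ is a real, if harmless, imprecision in the paper that you handle appropriately.
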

	\begin{proof}
		From \eqref{eqn:u1 and u2 w0<u1<u2}, $w_0\leq u_1$ is equivalent to
		\begin{equation}\label{eqn:ineq1}
			\frac{\alpha_{2-}(\gamma_{2-}\alpha_3-1)}{\alpha_{2+}(1-\gamma_{2+}\alpha_3)}\geq 1.
		\end{equation}
		Moreover, $K_{3-}<\overline \alpha$ is equivalent to $1-\gamma_{2+}\alpha_3>0$, and $K_{3-}>\underline\alpha$ is equivalent to $\alpha_3>0$. Hence, we can rewrite \eqref{eqn:ineq1} as
		\begin{equation*}
			\alpha_3\geq \frac{N_1}{\delta}-\frac{\kappa_1 N_2}{2\delta M_1},
		\end{equation*}
		or, equivalently,
		\begin{equation*}
			K_{3-}\geq \alpha_0.
		\end{equation*}
		Using Lemma \ref{lemma:alpha0, ul alpha, ol alpha} completes the proof.
	\end{proof}

	We now determine the signs of $K_{3\pm}$. First, we note that a sufficient condition such that $g$ is increasing, particularly in the region $\{u_1<x<u_2\}$, is $K_{3-}\le 0\le K_{3+}$. The following lemma proves that $K_{3-}<0$.
	\begin{lemma}
		$K_{3-}<0$.
	\end{lemma}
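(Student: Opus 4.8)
The plan is to read off the sign of $K_{3-}$ directly from the smooth-fit system \eqref{eqn:system at x=u2} at $x=u_2$ — which holds by construction — rather than from the bounds on $K_{3-}$ established above.

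First I would eliminate $K_{3+}$: subtracting $\gamma_{3+}$ times the first equation of \eqref{eqn:system at x=u2} from the second removes the $K_{3+}$-term and leaves
\begin{equation*}
	K_{3-}\gamma_{3-}\left(\gamma_{3-}-\gamma_{3+}\right)e^{\gamma_{3-}(u_2-u_1)}=a\left(\gamma_{4-}-\gamma_{3+}\right),
\end{equation*}
that is,
\begin{equation*}
	K_{3-}\gamma_{3-}\,e^{\gamma_{3-}(u_2-u_1)}=\frac{a\left(\gamma_{3+}-\gamma_{4-}\right)}{\gamma_{3+}-\gamma_{3-}},
\end{equation*}
the division being legitimate since $\gamma_{3+}\neq\gamma_{3-}$ (they are the two distinct roots of $\frac{1}{2} N_2\gamma^2+(N_1-\overline c_2)\gamma-\delta=0$).

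Then I would collect signs. By \eqref{eqn:notations}, $\gamma_{3\pm}$ are the roots of a quadratic of the form $\frac{1}{2} N_2\gamma^2+(\,\cdot\,)\gamma-\delta$ with $N_2>0$ and $\delta>0$, so $\gamma_{3-}<0<\gamma_{3+}$, and likewise $\gamma_{4-}<0$. Together with $a>0$ (the case $a=0$ degenerates to the univariate problem for Line~2) this makes the right-hand side above strictly positive, so, since $e^{\gamma_{3-}(u_2-u_1)}>0$ and $\gamma_{3-}<0$, we are forced to have $K_{3-}<0$, which is the assertion.

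I do not expect any genuine obstacle: the only care needed is to have the elementary sign facts $\gamma_{3-}<0<\gamma_{3+}$ and $\gamma_{4-}<0$ in hand, all immediate from \eqref{eqn:notations}. As a by-product, the mirror-image elimination gives $K_{3+}\gamma_{3+}e^{\gamma_{3+}(u_2-u_1)}=\frac{a(\gamma_{4-}-\gamma_{3-})}{\gamma_{3+}-\gamma_{3-}}$, which is positive once one notes $\gamma_{4-}>\gamma_{3-}$ (the negative root of $\frac{1}{2} N_2\gamma^2+b\gamma-\delta$ decreases in $b$, and $N_1-\overline c_1-\overline c_2<N_1-\overline c_2$); hence $K_{3+}>0$ as well, so both inequalities $K_{3-}\le 0\le K_{3+}$ used in the text — which together yield the concavity of $g$ on $(u_1,u_2)$ — fall out of the same short computation.
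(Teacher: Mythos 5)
Your proof is correct, and it takes a genuinely different and shorter route than the paper's. The paper proves the lemma by showing that the upper bound $\overline\alpha$ from \eqref{eqn:alpha 1st ineq} is itself nonpositive, which reduces to the parameter inequality $\frac{\overline c_2}{\delta}\geq \frac{1}{\gamma_{2+}}-\frac{1}{\gamma_{3+}}$ and is verified by a brute-force squaring argument; the sign of $K_{3-}$ then follows from $K_{3-}<\overline\alpha\leq 0$. You instead read the sign directly from the smooth-fit system \eqref{eqn:system at x=u2}, which has already been imposed in the derivation, by eliminating $K_{3+}$: this gives $K_{3-}\gamma_{3-}e^{\gamma_{3-}(u_2-u_1)}=\frac{a(\gamma_{3+}-\gamma_{4-})}{\gamma_{3+}-\gamma_{3-}}>0$ (using $\gamma_{3-}<0<\gamma_{3+}$, $\gamma_{4-}<0$, $a>0$), forcing $K_{3-}<0$. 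The sign facts you invoke are all immediate from \eqref{eqn:notations}, and your comparison-of-roots argument for $\gamma_{4-}>\gamma_{3-}$ matches the paper's unproved assertion just before its lemma on $u_1\leq u_2$. Your elimination buys you $K_{3+}>0$ as a free byproduct, whereas the paper establishes that separately via another lemma ($\frac{1-a}{\gamma_{3-}}<\overline\alpha$). What the paper's longer computation buys instead is the parameter-only inequality $\overline\alpha\leq 0$, which is independently useful for organizing the chain of bounds $\alpha_{LB}<K_{3-}<\alpha_{UB}$ used in the existence argument for $K_{3-}$; your route gets the sign but not that quantitative bound. You correctly flag the degenerate case $a=0$: there the right-hand side vanishes and the whole construction of a finite $u_2$ breaks down, so the strict inequality does require $a>0$ — a hypothesis the theorem tacitly assumes but does not state.
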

	\begin{proof}
		From \eqref{eqn:alpha 1st ineq}, it suffices to prove that
		\begin{equation*}
			\frac{\overline{c}_2}{\delta}\geq  \frac{1}{\gamma_{2+}}-\frac{1}{\gamma_{3+}}.
		\end{equation*}
		Write $k:=2\delta N_2>0$. Suppose otherwise that $\frac{\overline{c}_2}{\delta}< \frac{1}{\gamma_{2+}}-\frac{1}{\gamma_{3+}}$. This is equivalent to
		\begin{equation}\label{eqn:r1}
			\begin{aligned}
				&2\overline c_2\sqrt{(N_1-\overline c_2)^2+k}\cdot\sqrt{N_1^2+k}-\overline c_2\left(k-2N_1(N_1-\overline{c}_2)\right)\\
				&< (2N_1\overline c_2+k)\sqrt{(N_1-\overline c_2)^2+k}+(2\overline c_2(N_1-\overline c_2)-k)\sqrt{N_1^2+k}.
			\end{aligned}			
		\end{equation}
		The left-hand side of \eqref{eqn:r1} can be shown to be always positive. If the right-hand side of \eqref{eqn:r1} is negative, it results in a contradiction. Otherwise, we can square both sides and get
		\begin{equation*}
			\overline{c}_2^2k< 0,
		\end{equation*}
		which is also a contradiction. The proof is complete.
	\end{proof}
	
	We now show that $K_{3+}\geq 0$. Using \eqref{eqn:K3+}, we must have the following:
	\begin{equation*}
		K_{3-}\geq \frac{1-a}{\gamma_{3-}}.
	\end{equation*}
	The following lemma proves that $\frac{1-a}{\gamma_{3-}}$ does not exceed the upper bound $\overline\alpha$.
	\begin{lemma}
		$\frac{1-a}{\gamma_{3-}}<\overline{\alpha}$.
	\end{lemma}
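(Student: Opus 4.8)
The plan is to peel off all the constants and reduce the claim to an elementary inequality between square roots. Recall from \eqref{eqn:alpha 1st ineq} that $\overline{\alpha}=\frac{(1-a)\gamma_{3+}}{\gamma_{3+}-\gamma_{3-}}\left(\frac{1}{\gamma_{2+}}-\frac{1}{\gamma_{3+}}-\frac{\overline{c}_2}{\delta}\right)$. Dividing the target inequality $\frac{1-a}{\gamma_{3-}}<\overline{\alpha}$ by $1-a>0$, then multiplying by $\gamma_{3+}-\gamma_{3-}>0$, then dividing by $\gamma_{3+}>0$, and cancelling the common summand $-1$ on each side, one finds that $\frac{1-a}{\gamma_{3-}}<\overline{\alpha}$ is equivalent to
\begin{equation*}
	\frac{1}{\gamma_{2+}}-\frac{1}{\gamma_{3-}}>\frac{\overline{c}_2}{\delta}.
\end{equation*}

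The second step rewrites the left-hand side via Vieta's formulas. By \eqref{eqn:notations} (equivalently \eqref{eqn:hjb w0<x<u1} and \eqref{eqn:hjb 3}), $\gamma_{2\pm}$ and $\gamma_{3\pm}$ are the roots of $\frac12 N_2\gamma^2+N_1\gamma-\delta=0$ and $\frac12 N_2\gamma^2+(N_1-\overline{c}_2)\gamma-\delta=0$, respectively; these quadratics share the leading coefficient $\frac12 N_2$ and the constant term $-\delta$, so $\gamma_{2+}\gamma_{2-}=\gamma_{3+}\gamma_{3-}=-\frac{2\delta}{N_2}$. Hence $\frac{1}{\gamma_{2+}}=-\frac{N_2\gamma_{2-}}{2\delta}$ and $\frac{1}{\gamma_{3-}}=-\frac{N_2\gamma_{3+}}{2\delta}$, so the displayed inequality becomes $\frac{N_2}{2}\bigl(\gamma_{3+}-\gamma_{2-}\bigr)>\overline{c}_2$. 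Inserting the explicit forms of $\gamma_{3+}$ and $\gamma_{2-}$ from \eqref{eqn:notations} and simplifying gives $N_2(\gamma_{3+}-\gamma_{2-})=\overline{c}_2+\sqrt{(N_1-\overline{c}_2)^2+2\delta N_2}+\sqrt{N_1^2+2\delta N_2}$, so it remains to prove
\begin{equation*}
	\sqrt{(N_1-\overline{c}_2)^2+2\delta N_2}+\sqrt{N_1^2+2\delta N_2}>\overline{c}_2.
\end{equation*}

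This last inequality is immediate: since $\delta>0$ and $N_2>0$ we have $\sqrt{N_1^2+2\delta N_2}>|N_1|\ge N_1$ and $\sqrt{(N_1-\overline{c}_2)^2+2\delta N_2}>|N_1-\overline{c}_2|\ge \overline{c}_2-N_1$, and summing these two strict estimates yields a total strictly larger than $N_1+(\overline{c}_2-N_1)=\overline{c}_2$. The only point that requires care in this argument is keeping track of the signs ($\gamma_{2-},\gamma_{3-}<0<\gamma_{2+},\gamma_{3+}$ and $\gamma_{3+}-\gamma_{3-}>0$) so that no inequality is inadvertently reversed while clearing denominators; once that reduction is carried out correctly, the remaining estimate is trivial.
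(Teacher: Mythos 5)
Your reduction to $\frac{\overline c_2}{\delta}<\frac{1}{\gamma_{2+}}-\frac{1}{\gamma_{3-}}$ is exactly the paper's first step, and both proofs from that point on are correct, but the subsequent arguments differ in a way worth noting. The paper bounds $\frac{1}{\gamma_{2+}}$ and $-\frac{1}{\gamma_{3-}}$ from below separately via the inequality $\sqrt{A^2+B}-A<\frac{B}{2A}$, which forces a case split on the sign of $N_1-\overline c_2$ (in one case the term $-\frac{1}{\gamma_{3-}}$ contributes the estimate $\frac{\overline c_2-N_1}{\delta}$; in the other it is simply discarded as a positive quantity). Your Vieta observation $\gamma_{2+}\gamma_{2-}=\gamma_{3+}\gamma_{3-}=-\frac{2\delta}{N_2}$ converts the reciprocals $\frac{1}{\gamma_{2+}}$ and $\frac{1}{\gamma_{3-}}$ into the conjugate roots $\gamma_{2-}$ and $\gamma_{3+}$, after which everything reduces to the single inequality $\sqrt{(N_1-\overline c_2)^2+2\delta N_2}+\sqrt{N_1^2+2\delta N_2}>\overline c_2$, disposed of by the crude bound $\sqrt{x^2+c}>|x|$. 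This eliminates the case analysis entirely and needs only the weaker elementary estimate. The paper's device has the minor advantage of being reused elsewhere in the proof of Theorem 4.2, so the authors had reason to prefer it; your route is shorter and self-contained as a proof of this particular lemma.
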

	\begin{proof}
		It suffices to prove that
		\begin{equation}\label{eqn:c2/b<UB}
			\frac{\overline{c}_2}{\delta}<  \frac{1}{\gamma_{2+}}-\frac{1}{\gamma_{3-}}.
		\end{equation}
		From the elementary inequality
		\begin{equation*}
			\sqrt{A^2+B}-A<\frac{B}{2A},\quad A,B>0,
		\end{equation*}
		we obtain the following:
		\begin{equation*}
			\gamma_{2+}<\frac{\delta}{N_1}\Leftrightarrow\frac{1}{\gamma_{2+}}>\frac{N_1}{\delta}.
		\end{equation*}
		If $N_1-\overline{c}_2\leq 0$, then
		\begin{equation*}
			-\gamma_{3-}<\frac{\delta}{\overline{c}_2-N_1}\Leftrightarrow-\frac{1}{\gamma_{3-}}>\frac{\overline c_2-N_1}{\delta},
		\end{equation*}
		and \eqref{eqn:c2/b<UB} follows. If $N_1-\overline{c}_2> 0$, then
		\begin{equation*}
			\gamma_{2+}<\frac{\delta}{N_1}<\frac{\delta}{\overline{c}_2}\Leftrightarrow\frac{\overline{c}_2}{\delta}<\frac{1}{\gamma_{2+}},
		\end{equation*}
		and \eqref{eqn:c2/b<UB} follows since $\gamma_{3-}<0$. The proof is complete.
	\end{proof}

	We now determine the conditions under which $\frac{1-a}{\gamma_{3-}}$, $\alpha_0$, or $\underline\alpha$ serves as the lower bound for $K_{3-}$ via the lemma below. 
	\begin{lemma}\label{lemma on the 3 LBs}
		\begin{enumerate}
			\item[(i)] $\frac{1-a}{\gamma_{3-}} > \underline{\alpha}$ if and only if 
			\begin{equation}\label{ineq:1}
				\overline c_2>\frac{\delta N_2}{2N_1}.
			\end{equation}
			\item[(ii)] $\frac{1-a}{\gamma_{3-}}>\alpha_0$ if and only if 
			\begin{equation}\label{ineq:2}
				\overline c_2>N_1- \frac{\kappa_1N_2}{2M_1}+\frac{\delta M_1}{\kappa_1}.
			\end{equation}	
		\end{enumerate}
	\end{lemma}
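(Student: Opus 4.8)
The plan is to reduce both equivalences to a single, easily-checked property of the quadratic whose roots are $\gamma_{3+}$ and $\gamma_{3-}$. Observe that $\gamma_{3\pm}$ (defined via \eqref{eqn:notations}, evaluated at $M_1$) are the roots of
\[
Q(\gamma):=\frac{1}{2}N_2\gamma^2+(N_1-\overline c_2)\gamma-\delta ,
\]
a convex parabola with $Q(0)=-\delta<0$; hence $\gamma_{3-}<0<\gamma_{3+}$ and $Q(\gamma)<0$ exactly on $(\gamma_{3-},\gamma_{3+})$. Consequently, for every $\gamma_\ast\le 0$ (which automatically satisfies $\gamma_\ast<\gamma_{3+}$) we have the criterion $\gamma_{3-}<\gamma_\ast\iff Q(\gamma_\ast)<0$. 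Everything else is elementary sign-chasing, kept under control by the signs $1-a>0$, $\gamma_{3+}>0$, $\gamma_{3+}-\gamma_{3-}>0$, and $\gamma_{3-}<0$.

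For part (i), I would rewrite $\frac{1-a}{\gamma_{3-}}>\underline\alpha$ (with $\underline\alpha$ from \eqref{eqn:alpha w0<u1<u2}) by successively dividing by $1-a$, multiplying by $\gamma_{3+}-\gamma_{3-}$, cancelling the common factor $\gamma_{3+}$, and finally multiplying by $\gamma_{3-}<0$ (reversing the inequality). Each step is reversible, and the chain collapses to $\gamma_{3-}<-\delta/\overline c_2$. Since $-\delta/\overline c_2<0$, the criterion applies, and a direct computation gives $Q(-\delta/\overline c_2)=\frac{\delta}{\overline c_2}\bigl(\frac{\delta N_2}{2\overline c_2}-N_1\bigr)$, which is negative precisely when $\overline c_2>\frac{\delta N_2}{2N_1}$. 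This yields \eqref{ineq:1}.

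Part (ii) follows the same template with one case split. Set $E:=N_1-\frac{\kappa_1 N_2}{2M_1}-\overline c_2$ and $B:=E/\delta$, so that $\alpha_0=\frac{(1-a)\gamma_{3+}}{\gamma_{3+}-\gamma_{3-}}\bigl(B-\frac{1}{\gamma_{3+}}\bigr)$. The same sign-chasing (now using only the positive factors $1-a$, $\gamma_{3+}-\gamma_{3-}$, $\gamma_{3+}$) reduces $\frac{1-a}{\gamma_{3-}}>\alpha_0$ to $\frac{1}{\gamma_{3-}}>B$. If $B\ge0$, the left side is negative so this fails, and \eqref{ineq:2} also fails since $B\ge0$ gives $\overline c_2\le N_1-\frac{\kappa_1 N_2}{2M_1}<N_1-\frac{\kappa_1 N_2}{2M_1}+\frac{\delta M_1}{\kappa_1}$; thus the equivalence holds vacuously. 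If $B<0$, multiplying by $\gamma_{3-}<0$ and then by $1/B<0$ turns $\frac{1}{\gamma_{3-}}>B$ into $\gamma_{3-}<1/B$, and since $1/B<0$ the criterion gives $Q(1/B)<0$. Multiplying through by $B^2>0$ and substituting $B=E/\delta$, the term $(N_1-\overline c_2)B-\delta B^2$ reduces via the identity $N_1-\overline c_2-E=\frac{\kappa_1 N_2}{2M_1}$ to $\frac{\kappa_1 N_2 E}{2M_1\delta}$, so that $Q(1/B)<0$ becomes $\delta+\frac{\kappa_1}{M_1}E<0$, i.e. $\overline c_2>N_1-\frac{\kappa_1 N_2}{2M_1}+\frac{\delta M_1}{\kappa_1}$. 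This is \eqref{ineq:2}, and it in particular forces $E<0$, consistent with the case $B<0$.

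The main obstacle is bookkeeping rather than mathematical depth: one must track every inequality reversal through the several multiplications and divisions by quantities of definite sign, and in part (ii) the a priori unknown sign of $B$ forces the case split, which has to be arranged so that the stated equivalence holds in both regimes. The only genuine computation to be careful with is the simplification of $Q(1/B)$, where the cross term $(N_1-\overline c_2)B-\delta B^2$ must be recognized as collapsing to a single term proportional to $E$.
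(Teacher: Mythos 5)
Your proof is correct, and it takes a genuinely cleaner route than the paper's. Both proofs eventually reduce the two inequalities to comparing $\gamma_{3-}$ against an explicit negative threshold ($-\delta/\overline c_2$ in part (i), $1/B$ in part (ii)), but the paper then substitutes the closed-form expression for $\gamma_{3-}$ and is forced into the classic ``isolate the square root, check the sign of the other side, square'' procedure, with extra care about which side might be negative — and in part (ii) an auxiliary discussion of a ``trivial case'' to patch the sign bookkeeping. You sidestep all of that by observing that $\gamma_{3\pm}$ are the roots of the convex quadratic $Q(\gamma)=\tfrac{1}{2}N_2\gamma^2+(N_1-\overline c_2)\gamma-\delta$ with $Q(0)<0$, so for any test point $\gamma_\ast\le 0$ the location of $\gamma_{3-}$ relative to $\gamma_\ast$ is equivalent to the sign of $Q(\gamma_\ast)$. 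Evaluating $Q$ at the two test points is a short polynomial computation with no radicals, so no squaring step and no auxiliary sign case is needed; the only case split left is the benign $B\ge 0$ versus $B<0$, which you handle correctly and where the equivalence holds vacuously on one side. Your algebra (the simplification $Q(-\delta/\overline c_2)=\tfrac{\delta}{\overline c_2}\bigl(\tfrac{\delta N_2}{2\overline c_2}-N_1\bigr)$ and the collapse of $(N_1-\overline c_2)B-\delta B^2$ to $\tfrac{\kappa_1 N_2 E}{2M_1\delta}$) checks out. In short: same endpoint, but your quadratic-root criterion is a more robust and reusable lever than the paper's direct manipulation of the radicals — it would, for instance, also streamline the analogous arguments in Lemmas \ref{lemma on ineqs}, \ref{lemma on u2>w0}, and \ref{lemma on gamma3- and k1/M1}.
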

	\begin{proof}
		$\frac{1-a}{\gamma_{3-}} > \underline{\alpha}$ is equivalent to
		\begin{equation}\label{eqn:r2}
			\delta N_2-\overline c_2(N_1-\overline c_2)<\overline c_2\sqrt{(N_1-\overline c_2)^2+2\delta N_2}.
		\end{equation}
		The inequality holds when $\delta N_2-\overline c_2(N_1-\overline c_2)<0$, or, equivalently, $N_1\overline c_2>\delta N_2+\overline c_2^2$. Squaring both sides of \eqref{eqn:r2} yields \eqref{ineq:1}, which is equivalent to $N_1\overline c_2>\frac{\delta N_2}{2}$. Since $\delta N_2+\overline c_2^2>\frac{\delta N_2}{2}$, then $(i)$ follows.
		
		$\frac{1-a}{\gamma_{3-}}>\alpha_0$ is equivalent to
		\begin{equation*}
			1<\gamma_{3-}\left(\frac{N_1-\overline c_2}{\delta}-\frac{\kappa_1N_2}{2\delta M_1}\right).
		\end{equation*}
		A necessary condition for the above inequality to hold is that $\overline c_2>N_1-\frac{\kappa_1N_2}{2M_1}$. Moreover, the inequality is equivalent to the following:
		\begin{equation}\label{eqn:r3}
			2\delta M_1N_2 +2M_1(N_1-\overline c_2)^2-\kappa_1 N_2(N_1-\overline c_2)<(\kappa_1N_2-2M_1(N_1-\overline c_2))\sqrt{(N_1-\overline c_2)^2+2\delta N_2}.
		\end{equation}
		The above inequality is immediately satisfied when $2M_1(N_1-\overline c_2)^2-\kappa_1 N_2(N_1-\overline c_2)<-2\delta M_1N_2$ (i.e., the left-hand side is negative) and $\overline c_2>N_1-\frac{\kappa_1N_2}{2M_1}$ (i.e., the right-hand side is positive). We call this scenario the ``trivial" case.
		
		Squaring both sides of \eqref{eqn:r3} yields \eqref{ineq:2}, which satisfies the necessary condition. It must be noted that squaring both sides of \eqref{eqn:r3} is valid if $N_1-\overline c_2<0$. Moreover, if $N_1-\overline c_2>0$, we can rewrite \eqref{ineq:2} as $2M_1(N_1-\overline c_2)^2-\kappa_1 N_2(N_1-\overline c_2)<-\frac{2\delta M_1^2(N_1-\overline c_2)}{\kappa_1}$. Since $-2\delta M_1N_2<-\frac{2\delta M_1^2(N_1-\overline c_2)}{\kappa_1}$ if $0<N_1-\overline c_2<\frac{\kappa_1N_2}{M_1}$, then the trivial case is covered by $(ii)$. The proof is complete.
	\end{proof}
	
	Before presenting the formula for the lower bound of $K_{3-}$, we state the following lemma which is useful in bridging the results of Lemmas \ref{lemma on xM1<u1} and \ref{lemma on the 3 LBs}:
	\begin{lemma}\label{lemma on ineqs}
		$\frac{\delta N_2}{2N_1}< N_1- \frac{\kappa_1N_2}{2M_1}+\frac{\delta M_1}{\kappa_1}$ if and only if
		\begin{equation*}
			N_1>\frac{\kappa_1 N_2}{2M_1}.
		\end{equation*}
	\end{lemma}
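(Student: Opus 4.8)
The plan is to collapse the claimed equivalence to the sign of a product of two factors, the first of which is obviously positive. All of $N_1,N_2,\kappa_1,M_1,\delta$ are strictly positive and finite here (recall $N_1=\overline N_1(M_1)=\kappa_1\widetilde\mu_1+\kappa_2\mu_2(\frac{\kappa_2}{\kappa_1}M_1)>0$ from \eqref{eqn:notations}, and the lemma is used in the regime $M_1<\infty$). So I would first multiply the target inequality through by $2N_1>0$, which turns
\begin{equation*}
    \frac{\delta N_2}{2N_1}< N_1-\frac{\kappa_1 N_2}{2M_1}+\frac{\delta M_1}{\kappa_1}
\end{equation*}
into the equivalent statement
\begin{equation*}
    0< 2N_1^2-\frac{\kappa_1 N_1 N_2}{M_1}+\frac{2\delta M_1 N_1}{\kappa_1}-\delta N_2.
\end{equation*}

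The crux is the elementary identity $\frac{\kappa_1}{2M_1}\cdot\frac{\delta M_1}{\kappa_1}=\frac{\delta}{2}$, which is exactly the cross term produced when multiplying out $\left(N_1+\frac{\delta M_1}{\kappa_1}\right)\left(N_1-\frac{\kappa_1 N_2}{2M_1}\right)$. A short expansion then yields the factorization
\begin{equation*}
    2N_1^2-\frac{\kappa_1 N_1 N_2}{M_1}+\frac{2\delta M_1 N_1}{\kappa_1}-\delta N_2 = 2\left(N_1+\frac{\delta M_1}{\kappa_1}\right)\left(N_1-\frac{\kappa_1 N_2}{2M_1}\right).
\end{equation*}
Since $N_1+\frac{\delta M_1}{\kappa_1}>0$, the sign of the right-hand side — and hence the validity of the original inequality — is governed solely by the sign of $N_1-\frac{\kappa_1 N_2}{2M_1}$; therefore the inequality holds if and only if $N_1>\frac{\kappa_1 N_2}{2M_1}$, which is the claim.

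I do not anticipate any genuine obstacle: the argument is essentially a one-line factorization once the relation $\frac{\kappa_1}{2M_1}\cdot\frac{\delta M_1}{\kappa_1}=\frac{\delta}{2}$ is noticed. The only points needing a word of care are that $N_1>0$ (so multiplying by $2N_1$ preserves the direction of the inequality) and that the first factor $N_1+\frac{\delta M_1}{\kappa_1}$ is strictly positive; both are immediate from the definitions in \eqref{eqn:notations} together with the standing positivity assumptions on $\kappa_i$, $\delta$, and $M_1<\infty$.
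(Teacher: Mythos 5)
Your proof is correct and takes essentially the same route as the paper: clear the positive denominators to reduce the claim to the sign of a quadratic expression in $N_1$, which factors as $2\bigl(N_1+\tfrac{\delta M_1}{\kappa_1}\bigr)\bigl(N_1-\tfrac{\kappa_1 N_2}{2M_1}\bigr)$ (the paper finds the two roots of the equivalent quadratic $2\kappa_1M_1N_1^2+(2\delta M_1^2-\kappa_1^2N_2)N_1-\delta\kappa_1M_1N_2$, which are exactly $\tfrac{\kappa_1 N_2}{2M_1}$ and $-\tfrac{\delta M_1}{\kappa_1}$, and discards the negative one). Writing the explicit factorization is a slightly cleaner presentation, but the substance is identical.
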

	\begin{proof}
		$\frac{\delta N_2}{2N_1}\leq N_1- \frac{\kappa_1N_2}{2M_1}+\frac{\delta M_1}{\kappa_1}$ is equivalent to
		\begin{equation*}
			2\kappa_1M_1N_1^2+(2\delta M_1^2-\kappa_1^2N_2)N_1-\delta\kappa_1M_1N_2>0.
		\end{equation*}
		Solving the inequality with respect to $N_1$ yields
		\begin{equation*}
			N_1>\frac{\kappa_1 N_2}{2M_1}\quad\mbox{or}\quad N_1<-\frac{\delta M_1}{\kappa_1}.
		\end{equation*}
		Since $N_1>0$, the result follows.
	\end{proof}
	
	Following the results of Lemmas \ref{lemma on xM1<u1}, \ref{lemma on the 3 LBs}, and \ref{lemma on ineqs}, we now present the four cases that yield the expression for the lower bound of $K_{3-}$ (denoted by $\alpha_{LB}$):
	\begin{itemize}
		\item[(i)] If $N_1>\frac{\kappa_1N_2}{2M_1}$ and $\overline c_2\geq N_1-\frac{\kappa_1 N_2}{2M_1}+\frac{\delta M_1}{\kappa_1}$, then $\alpha_{LB}=\frac{1-a}{\gamma_{3-}}>\alpha_0>\underline\alpha$.
		\item[(ii)] If $N_1>\frac{\kappa_1N_2}{2M_1}$ and $\overline c_2<N_1-\frac{\kappa_1 N_2}{2M_1}+\frac{\delta M_1}{\kappa_1}$, then $\alpha_{LB}=\alpha_0>\left(\underline\alpha\vee \frac{1-a}{\gamma_{3-}}\right)$.
		\item[(iii)] If $N_1\leq \frac{\kappa_1N_2}{2M_1}$ and $\overline c_2\geq \frac{\delta N_2}{2N_1}$, then $\alpha_{LB}=\frac{1-a}{\gamma_{3-}}>\underline\alpha>\alpha_0$.
		\item[(iv)] If $N_1\leq \frac{\kappa_1N_2}{2M_1}$ and $\overline c_2< \frac{\delta N_2}{2N_1}$, then $\alpha_{LB}=\underline\alpha>\left(\alpha_0\vee \frac{1-a}{\gamma_{3-}}\right)$.
	\end{itemize}
	For case (iii), we used Lemma \ref{lemma on ineqs} to combine the inequalities $\overline c_2\geq N_1-\frac{\kappa_1 N_2}{2M_1}+\frac{\delta M_1}{\kappa_1}$ and $\overline c_2\geq \frac{\delta N_2}{2N_1}$. For case (iv), we also used Lemma \ref{lemma on ineqs} to combine two subcases: (1) $N_1-\frac{\kappa_1 N_2}{2M_1}+\frac{\delta M_1}{\kappa_1}\leq \overline c_2<\frac{\delta N_2}{2N_1}$ and (2) $\overline c_2<N_1-\frac{\kappa_1 N_2}{2M_1}+\frac{\delta M_1}{\kappa_1}$. Hence, we obtain the formula for $\alpha_{LB}$ given in \eqref{eqn:alpha w0<u1<u2}. Thus, $K_{3-}<0<K_{3+}$ and $w_0\leq u_1$ if the following inequalities hold:
	\begin{equation*}
		\alpha_{LB}<K_{3-}<\overline \alpha.
	\end{equation*}

    \begin{remark}
		It follows that $g$ is strictly increasing in the region $\{u_1<x<u_2\}$. It can easily be shown that $g$ is also strictly increasing in the other regions. By the continuity of the first derivative, $g$ is strictly increasing for all $x>0$.
		
		It can also be easily shown that $g$ is strictly concave in the regions $\{x<w_0\}$ and $\{x>u_2\}$. Since $g'''>0$ in the regions $\{w_0<x<u_1\}$ and $\{u_1<x<u_2\}$, then, by the continuity of the second derivative, $g$ is strictly concave on $[w_0,u_2]$.
	\end{remark}
	
	It can be shown that $\gamma_{4-}-\gamma_{3-}>0$. Hence, $u_2$ is well defined. We now require that $u_1\leq u_2$. The following lemma gives a necessary and sufficient condition such that $u_1\leq u_2$.
	
	\begin{lemma}\label{lemma on u1<u2}
		$u_1\leq u_2$ if and only if
		\begin{equation*}
			K_{3-}\leq \frac{(1-a)(\gamma_{3+}-\gamma_{4-})}{\gamma_{3-}(\gamma_{3+}-\gamma_{3-})}=:\alpha_{UB}.
		\end{equation*}
	\end{lemma}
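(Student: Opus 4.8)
The plan is to read the comparison $u_1\le u_2$ straight off the closed-form expression for $u_2$ in \eqref{eqn:u1 and u2 w0<u1<u2}, i.e.\ from
\[
u_2-u_1=\frac{1}{\gamma_{3+}-\gamma_{3-}}\ln\!\left(\frac{K_{3-}\gamma_{3-}(\gamma_{4-}-\gamma_{3-})}{K_{3+}\gamma_{3+}(\gamma_{3+}-\gamma_{4-})}\right),
\]
and to convert the inequality $u_2-u_1\ge 0$ into a bound on $K_{3-}$. Since $\gamma_{3-}<0<\gamma_{3+}$ (immediate from their definitions in \eqref{eqn:notations} together with $\overline N_2>0$, by the usual $\sqrt{A^2+B}>|A|$ estimate), the prefactor $(\gamma_{3+}-\gamma_{3-})^{-1}$ is positive, so $u_1\le u_2$ is equivalent to the argument of the logarithm being at least $1$.

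Before cross-multiplying I would record the signs of every factor in that argument: $K_{3-}<0<K_{3+}$ (established in the preceding lemmas), $\gamma_{3-}<0<\gamma_{3+}$, $\gamma_{4-}<0$ (the same square-root estimate applied to $\overline\gamma_{4-}$), $\gamma_{4-}-\gamma_{3-}>0$ (noted just above the lemma), and hence $\gamma_{3+}-\gamma_{4-}>0$. Consequently both the numerator $K_{3-}\gamma_{3-}(\gamma_{4-}-\gamma_{3-})$ and the denominator $K_{3+}\gamma_{3+}(\gamma_{3+}-\gamma_{4-})$ are strictly positive, so ``ratio $\ge 1$'' is equivalent to $K_{3-}\gamma_{3-}(\gamma_{4-}-\gamma_{3-})\ge K_{3+}\gamma_{3+}(\gamma_{3+}-\gamma_{4-})$.

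The final step is an elimination. I would substitute $K_{3+}\gamma_{3+}=1-a-K_{3-}\gamma_{3-}$ from \eqref{eqn:K3+}, expand both sides, cancel the common term $K_{3-}\gamma_{3-}\gamma_{4-}$, and collect terms to reach $K_{3-}\gamma_{3-}(\gamma_{3+}-\gamma_{3-})\ge (1-a)(\gamma_{3+}-\gamma_{4-})$. Dividing through by $\gamma_{3-}(\gamma_{3+}-\gamma_{3-})$, which is negative, reverses the inequality and gives exactly $K_{3-}\le\alpha_{UB}$, and every step above is an equivalence, so the ``only if'' direction comes for free. I expect the only place needing genuine care to be the sign bookkeeping---tracking which of $\gamma_{3\pm}$, $\gamma_{4-}$, $K_{3\pm}$ are positive or negative, and hence whether each multiplication or division preserves or flips the inequality---since the algebra itself is a short linear manipulation once $K_{3+}\gamma_{3+}$ has been eliminated in favour of $K_{3-}$.
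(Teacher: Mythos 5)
Your proof is correct and takes essentially the same route as the paper: the paper's own proof is a two-line sketch ("$u_1\le u_2$ is equivalent to the ratio $\ge 1$; using \eqref{eqn:K3+} yields the result"), and you have simply filled in the sign bookkeeping that makes the cross-multiplication and the final division by the negative quantity $\gamma_{3-}(\gamma_{3+}-\gamma_{3-})$ legitimate, together with the algebraic elimination of $K_{3+}\gamma_{3+}$.
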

	\begin{proof}
		From \eqref{eqn:u1 and u2 w0<u1<u2}, $u_1\leq u_2$ is equivalent to
		\begin{equation*}
			\frac{K_{3-}\gamma_{3-}(\gamma_{4-}-\gamma_{3-})}{K_{3+}\gamma_{3+}(\gamma_{3+}-\gamma_{4-})}\geq 1.
		\end{equation*}
		Using \eqref{eqn:K3+} yields the result.
	\end{proof}
	The following lemma proves that $\alpha_{UB}$ is a lower upper bound of $K_{3-}$.
	\begin{lemma}
		$\alpha_{UB}\leq \overline \alpha$.
	\end{lemma}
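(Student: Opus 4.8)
The plan is to strip off a common positive factor, clear the denominator by multiplying through by $\gamma_{3-}<0$ (which reverses the inequality), and then recognise that almost everything cancels thanks to a shared Vieta invariant of the three quadratics whose roots are the various $\gamma$'s, leaving only the trivial bound $\gamma_{2+}\ge\gamma_{4-}$.

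First I would note that, since we have taken $a\le\frac12$ (so $1-a>0$) and $\gamma_{3-}<0<\gamma_{3+}$ (so $\gamma_{3+}-\gamma_{3-}>0$), the factor $\frac{1-a}{\gamma_{3+}-\gamma_{3-}}$ is strictly positive and can be divided out of both $\alpha_{UB}$ and $\overline\alpha$; hence $\alpha_{UB}\le\overline\alpha$ is equivalent to $\frac{\gamma_{3+}-\gamma_{4-}}{\gamma_{3-}}\le\frac{\gamma_{3+}}{\gamma_{2+}}-1-\frac{\gamma_{3+}\overline c_2}{\delta}$. Multiplying by $\gamma_{3-}<0$ turns this into $\gamma_{3+}-\gamma_{4-}\ge\frac{\gamma_{3-}\gamma_{3+}}{\gamma_{2+}}-\gamma_{3-}-\frac{\gamma_{3-}\gamma_{3+}\overline c_2}{\delta}$, a relation whose only remaining denominators are the $\gamma_{2+}$ and $\delta$ removed in the next step.

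The key observation is that each of the quadratics defining $\gamma_{2\pm}$, $\gamma_{3\pm}$ and $\gamma_{4\pm}$ has the form $\frac12 N_2\gamma^2+(\,\cdot\,)\gamma-\delta=0$, so by Vieta the product of its two roots equals $-\frac{2\delta}{N_2}$; in particular $\gamma_{2+}\gamma_{2-}=\gamma_{3+}\gamma_{3-}=-\frac{2\delta}{N_2}$. Substituting $\gamma_{3-}\gamma_{3+}=-\frac{2\delta}{N_2}$ gives $\frac{\gamma_{3-}\gamma_{3+}}{\gamma_{2+}}=\gamma_{2-}$ and $\frac{\gamma_{3-}\gamma_{3+}\overline c_2}{\delta}=-\frac{2\overline c_2}{N_2}$, after which the inequality becomes $\gamma_{3+}+\gamma_{3-}-\gamma_{4-}-\gamma_{2-}-\frac{2\overline c_2}{N_2}\ge0$. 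Now using the sum relations $\gamma_{3+}+\gamma_{3-}=-\frac{2(N_1-\overline c_2)}{N_2}$ and $\gamma_{2+}+\gamma_{2-}=-\frac{2N_1}{N_2}$, the $N_1$- and $\overline c_2$-terms cancel and what remains is exactly $\gamma_{2+}-\gamma_{4-}\ge0$.

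This last inequality is immediate from the formulas in \eqref{eqn:notations}: $\gamma_{2+}>0$ because its numerator $-N_1+\sqrt{N_1^2+2\delta N_2}$ is positive (as $N_1,N_2>0$), while $\gamma_{4-}<0$ because its numerator $-(N_1-\overline c_1-\overline c_2)-\sqrt{(N_1-\overline c_1-\overline c_2)^2+2\delta N_2}$ is negative. The only delicate point is the sign bookkeeping in the two multiplications (by the positive common factor, then by $\gamma_{3-}<0$) together with the correct use of the product and sum Vieta identities; once the invariant $\gamma_{2+}\gamma_{2-}=\gamma_{3+}\gamma_{3-}$ is noticed, the remaining computation is short and purely algebraic.
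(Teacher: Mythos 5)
Your proof is correct, and it takes a genuinely different route from the paper's. The paper's proof substitutes the explicit closed forms of $\gamma_{2+}$, $\gamma_{3\pm}$, and $\gamma_{4-}$ into $\alpha_{UB}\le\overline\alpha$, reduces everything to a single radical inequality $\sqrt{N_1^2+2\delta N_2}\sqrt{(N_1-\overline c_1-\overline c_2)^2+2\delta N_2} \ge -N_1(N_1-\overline c_1-\overline c_2)-2\delta N_2$, dispenses with the case where the right-hand side is negative, and squares to obtain $(\overline c_1+\overline c_2)^2\ge0$. Your argument instead exploits the structural fact that $\overline\gamma_{2\pm}(y)$, $\overline\gamma_{3\pm}(y)$, and $\overline\gamma_{4\pm}(y)$ are the roots of three quadratics sharing the same leading coefficient $\tfrac12 N_2$ and the same constant term $-\delta$, so they all have the same Vieta product $-2\delta/N_2$; combined with the Vieta sums, the inequality collapses (after dividing out the positive factor $\tfrac{1-a}{\gamma_{3+}-\gamma_{3-}}$ and multiplying through by $\gamma_{3-}<0$) to $\gamma_{2+}-\gamma_{4-}\ge0$, which is immediate from sign considerations. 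Your route avoids squaring and the associated sign bookkeeping entirely, and it makes transparent \emph{why} the inequality holds (shared Vieta invariants across the three drift regimes), whereas the paper's proof is shorter to write but more opaque; the trade-off is that your approach requires recognizing the common quadratic structure, while the paper's is a direct, if brute-force, computation.
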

	\begin{proof}
		$\alpha_{UB}\leq \overline \alpha$ is equivalent to
		\begin{equation}\label{eqn:r4}
			\sqrt{N_1^2+2\delta N_2}\sqrt{(N_1-\overline c_1-\overline c_2)^2+2\delta N_2} \geq -N_1(N_1-\overline c_1-\overline c_2)-2\delta N_2.
		\end{equation}
		The above inequality is always true if $\overline c_1+\overline c_2<N_1+\frac{2\delta N_2}{N_1}$. Suppose $\overline c_1+\overline c_2\geq N_1+\frac{2\delta N_2}{N_1}$. Squaring both sides of the inequality \eqref{eqn:r4} yields
		\begin{equation*}
			(\overline c_1+\overline c_2)^2\geq 0,
		\end{equation*}
		which is always true. The result is proved.
	\end{proof}
	
	The following lemma gives a necessary and sufficient condition such that $\alpha_{LB}\leq \alpha_{UB}$.
	\begin{lemma}\label{lemma on LB<UB}
		$\alpha_{LB}\leq \alpha_{UB}$ if and only if
		\begin{equation}\label{eqn:c1+c2>}
			\overline c_1+\overline c_2\geq N_1-\frac{\kappa_1 N_2}{2M_1}+\frac{\delta M_1}{\kappa_1}.
		\end{equation}
	\end{lemma}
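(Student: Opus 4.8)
The plan is a case analysis along the four regimes that were just used to \emph{define} $\alpha_{LB}$, reducing the inequality $\alpha_{LB}\le\alpha_{UB}$ in each regime to an elementary polynomial inequality in $\overline c_1+\overline c_2$. Throughout I would use Vieta's relations: since $\gamma_{3\pm}$ are the roots of $\tfrac12 N_2 z^2+(N_1-\overline c_2)z-\delta=0$, one has $\gamma_{3+}\gamma_{3-}=-2\delta/N_2$ and $\gamma_{3+}+\gamma_{3-}=-2(N_1-\overline c_2)/N_2$; and $\gamma_{4-}$ is the negative root of $q(z):=\tfrac12 N_2 z^2+(N_1-\overline c_1-\overline c_2)z-\delta$, so that $q$ is an upward parabola with $q(0)=-\delta<0$ and, for any $z_0<0$, the equivalence $\gamma_{4-}\ge z_0\iff q(z_0)\ge 0$ holds.

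First I would dispose of the two regimes in which $\alpha_{LB}=\tfrac{1-a}{\gamma_{3-}}$. There $\alpha_{LB}\le\alpha_{UB}$ holds unconditionally: dividing this inequality by the negative quantity $\tfrac{1-a}{\gamma_{3-}}$ reverses it into $\gamma_{3+}-\gamma_{3-}\ge\gamma_{3+}-\gamma_{4-}$, i.e. $\gamma_{4-}\ge\gamma_{3-}$, which is already known. It then remains to check that \eqref{eqn:c1+c2>} is automatically true in these two regimes as well: in the regime $N_1>\tfrac{\kappa_1N_2}{2M_1}$, $\overline c_2\ge N_1-\tfrac{\kappa_1N_2}{2M_1}+\tfrac{\delta M_1}{\kappa_1}$ this is immediate from $\overline c_1>0$, and in the regime $N_1\le\tfrac{\kappa_1N_2}{2M_1}$, $\overline c_2\ge\tfrac{\delta N_2}{2N_1}$ it follows by combining $\overline c_1>0$ with the inequality $\tfrac{\delta N_2}{2N_1}\ge N_1-\tfrac{\kappa_1N_2}{2M_1}+\tfrac{\delta M_1}{\kappa_1}$ supplied by Lemma \ref{lemma on ineqs}. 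So in these two regimes both sides of the asserted equivalence hold.

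The substantive regimes are $\alpha_{LB}=\alpha_0$ and $\alpha_{LB}=\underline\alpha$. For $\alpha_{LB}=\alpha_0$: cancelling the common positive factor $\tfrac{1-a}{\gamma_{3+}-\gamma_{3-}}$ from $\alpha_0\le\alpha_{UB}$ and multiplying through by $\gamma_{3-}<0$, the Vieta substitutions make every $\overline c_2$-term cancel and the inequality collapses to $\gamma_{4-}\ge-\tfrac{\kappa_1}{M_1}$, equivalently $q\!\left(-\tfrac{\kappa_1}{M_1}\right)\ge 0$, which — after multiplying by $M_1/\kappa_1$ — rearranges exactly into \eqref{eqn:c1+c2>}. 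For $\alpha_{LB}=\underline\alpha$ we are in the regime $N_1\le\tfrac{\kappa_1N_2}{2M_1}$, so by Lemma \ref{lemma:alpha0, ul alpha, ol alpha} $\underline\alpha\ge\alpha_0$; hence the forward implication is free, since $\alpha_{LB}=\underline\alpha\le\alpha_{UB}$ forces $\alpha_0\le\alpha_{UB}$ and therefore \eqref{eqn:c1+c2>} by the previous sentence. For the converse I would run the analogous reduction for $\underline\alpha\le\alpha_{UB}$; this collapses (again via Vieta) to $\gamma_{4-}\ge-\tfrac{2N_1}{N_2}$, i.e. $\overline c_1+\overline c_2\ge\tfrac{\delta N_2}{2N_1}$, and I would then invoke the defining constraint $\overline c_2<\tfrac{\delta N_2}{2N_1}$ of this regime together with Lemmas \ref{lemma on ineqs} and \ref{lemma on xM1<u1} to show that, under the standing hypotheses, this is equivalent to \eqref{eqn:c1+c2>}.

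The hard part will be exactly this last step. The $\underline\alpha$-computation produces the threshold $\tfrac{\delta N_2}{2N_1}$, whereas \eqref{eqn:c1+c2>} uses the threshold $N_1-\tfrac{\kappa_1N_2}{2M_1}+\tfrac{\delta M_1}{\kappa_1}$, and these are not literally equal once $N_1<\tfrac{\kappa_1N_2}{2M_1}$; closing the gap requires squeezing in the remaining case constraints (on $\overline c_2$, and implicitly on the admissible range of $K_{3-}$) rather than a bare computation. All the other regimes are routine once the Vieta identities and the parabola sign-test $\gamma_{4-}\ge z_0\iff q(z_0)\ge 0$ are available.
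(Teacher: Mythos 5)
Your route is the same as the paper's: a case split over the four regimes that define $\alpha_{LB}$. The Vieta/parabola-sign reformulation is an equivalent, slightly cleaner rewording of the paper's direct square-root manipulations, and the first three regimes go through exactly as you outline; in particular, the reduction of $\alpha_0\le\alpha_{UB}$ to $\gamma_{4-}\ge-\kappa_1/M_1$, and hence to \eqref{eqn:c1+c2>}, is precisely what the paper obtains by isolating the radical and squaring.

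Your worry about the regime $\alpha_{LB}=\underline\alpha$ is well-founded and cannot be closed in the way you tentatively suggest. The exact equivalent of $\underline\alpha\le\alpha_{UB}$ is $\overline c_1+\overline c_2\ge\tfrac{\delta N_2}{2N_1}$, while in this regime ($N_1\le\tfrac{\kappa_1N_2}{2M_1}$) Lemma~\ref{lemma on ineqs} gives $\tfrac{\delta N_2}{2N_1}\ge N_1-\tfrac{\kappa_1N_2}{2M_1}+\tfrac{\delta M_1}{\kappa_1}$, with strict inequality whenever $N_1<\tfrac{\kappa_1N_2}{2M_1}$. So \eqref{eqn:c1+c2>} is \emph{necessary} for $\alpha_{LB}\le\alpha_{UB}$ here but not sufficient; the additional case constraints you mention ($\overline c_2<\tfrac{\delta N_2}{2N_1}$, the admissible range of $K_{3-}$) do not involve $\overline c_1$ and therefore cannot bridge the gap between the two thresholds. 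In fact the paper's own proof of this case derives $\overline c_1+\overline c_2\ge\tfrac{\delta N_2}{2N_1}>N_1-\tfrac{\kappa_1N_2}{2M_1}+\tfrac{\delta M_1}{\kappa_1}$ and stops, which as written establishes only the forward implication (necessity); the stated biconditional is not actually proved in this regime. Your hesitation on that last step is therefore justified, not a defect of your proposal.
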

	\begin{proof}
		Suppose $N_1>\frac{\kappa_1 N_2}{2M_1}$ and $\overline c_2\geq  N_1-\frac{\kappa_1 N_2}{2M_1}+\frac{\delta M_1}{\kappa_1}$. Then, $\alpha_{LB}=\frac{1-a}{\gamma_{3-}}$ and $\alpha_{LB}\leq \alpha_{UB}$ are equivalent to
		\begin{equation*}
			\gamma_{3-}-\gamma_{4-}\leq 0,
		\end{equation*}
		which can be shown to be always true. Since $\overline c_1>0$, then \eqref{eqn:c1+c2>} follows. For the case $N_1\leq \frac{\kappa_1 N_2}{2M_1}$ and $\overline c_2\geq  \frac{\delta N_2}{2N_1}$, we also obtain the equivalence between $\alpha_{LB}\leq \alpha_{UB}$ and $\gamma_{3-}-\gamma_{4-}\leq 0$. By Lemma \ref{lemma on ineqs}, $\frac{\delta N_2}{2N_1}\geq N_1-\frac{\kappa_1 N_2}{2M_1}+\frac{\delta M_1}{\kappa_1}$, which implies \eqref{eqn:c1+c2>}.
		
		Suppose $N_1>\frac{\kappa_1 N_2}{2M_1}$ and $\overline c_2<  N_1-\frac{\kappa_1 N_2}{2M_1}+\frac{\delta M_1}{\kappa_1}$. Then, $\alpha_{LB}=\alpha_0$ and $\alpha_{LB}\leq \alpha_{UB}$ are equivalent to
		\begin{equation*}
			-(N_1-\overline c_1-\overline c_2)+\frac{\kappa_1 N_2}{M_1}\geq \sqrt{(N_1-\overline c_1-\overline c_2)^2+2\delta N_2},
		\end{equation*}
		which holds only if $\overline c_1+\overline c_2\geq N_1-\frac{\kappa_1 N_2}{M_1}$. Squaring both sides of the above inequality yields
		\begin{equation*}
			\overline c_1+\overline c_2 \geq N_1-\frac{\kappa_1 N_2}{2M_1}+\frac{\delta M_1}{\kappa_1}>N_1-\frac{\kappa_1 N_2}{M_1},
		\end{equation*}
		which proves the result for when $\alpha_{LB}=\alpha_0$.
		
		Suppose $N_1\leq \frac{\kappa_1 N_2}{2M_1}$ and $\overline c_2<  \frac{\delta N_2}{2N_1}$. Then, $\alpha_{LB}=\underline \alpha$ and $\alpha_{LB}\leq \alpha_{UB}$ are equivalent to
		\begin{equation*}
			N_1+\overline c_1+\overline c_2\geq \sqrt{(N_1-\overline c_1+\overline c_2)^2+2\delta N_2}.
		\end{equation*}
		Squaring both sides yields
		\begin{equation*}
			\overline c_1+\overline c_2\geq \frac{\delta N_2}{2N_1}>N_1-\frac{\kappa_1 N_2}{2M_1}+\frac{\delta M_1}{\kappa_1},
		\end{equation*}
		where the second inequality follows from Lemma \ref{lemma on ineqs}. The proof is complete.
	\end{proof}
	
	We now state the following result which gives a necessary and sufficient condition such that $w_0\leq u_1\leq u_2$.
	\begin{lemma}
		$w_0\leq u_1\leq u_2$ if and only if $K_{3-}\in(\alpha_{LB},\alpha_{UB})$.
	\end{lemma}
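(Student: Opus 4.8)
The plan is to treat this as a \emph{synthesis} lemma: each geometric/analytic requirement on the conjectured $g$ in \eqref{eqn:conjecture g one} has already been translated into an interval constraint on the single free constant $K_{3-}$, and the claim is that the conjunction of all these constraints is exactly $K_{3-}\in(\alpha_{LB},\alpha_{UB})$. The constraints accumulated in the preceding lemmas are: (a) positivity of $g$ together with well-definedness of the formula for $u_1$ in \eqref{eqn:u1 and u2 w0<u1<u2} forces $\alpha_3\in(0,1/\gamma_{2+})$, equivalently $\underline\alpha<K_{3-}<\overline\alpha$ by \eqref{eqn:alpha 1st ineq}; (b) monotonicity of $g$ on $\{u_1<x<u_2\}$ requires $K_{3-}\le 0\le K_{3+}$, where $K_{3-}<0$ was proved unconditionally and $K_{3+}\ge 0$ is equivalent to $K_{3-}\ge\frac{1-a}{\gamma_{3-}}$ via \eqref{eqn:K3+}; (c) Lemma~\ref{lemma on xM1<u1} gives $w_0\le u_1$ iff $\underline\alpha+(\alpha_0-\underline\alpha)\mathds{1}_{\{N_1>\kappa_1N_2/(2M_1)\}}<K_{3-}<\overline\alpha$; and (d) Lemma~\ref{lemma on u1<u2} gives $u_1\le u_2$ iff $K_{3-}\le\alpha_{UB}$ (recall $u_2$ is already known to be well defined since $\gamma_{4-}-\gamma_{3-}>0$).

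For the forward direction, assume the conjectured $g$ is a bona fide positive, strictly increasing, twice-differentiable solution with $w_0\le u_1\le u_2$. Then (a)--(d) all hold, so $K_{3-}$ lies strictly above $\max\bigl\{\underline\alpha,\ \tfrac{1-a}{\gamma_{3-}},\ \underline\alpha+(\alpha_0-\underline\alpha)\mathds{1}_{\{N_1>\kappa_1N_2/(2M_1)\}}\bigr\}$ and satisfies $K_{3-}\le\alpha_{UB}$. The core of the argument is to identify that maximum with $\alpha_{LB}$ as written in \eqref{eqn:alpha w0<u1<u2}: one splits into the four regimes determined by the sign of $N_1-\kappa_1N_2/(2M_1)$ and the comparisons of $\overline c_2$ against $N_1-\kappa_1N_2/(2M_1)+\delta M_1/\kappa_1$ and against $\delta N_2/(2N_1)$, and then uses Lemma~\ref{lemma:alpha0, ul alpha, ol alpha} (ordering $\alpha_0$ vs.\ $\underline\alpha$), Lemma~\ref{lemma on the 3 LBs} (ordering $\tfrac{1-a}{\gamma_{3-}}$ vs.\ $\underline\alpha$ and vs.\ $\alpha_0$), and Lemma~\ref{lemma on ineqs} (consistency of the two threshold conditions) to determine which of the three candidate lower bounds is largest in each regime; this reproduces exactly the four-case bullet list preceding the lemma, i.e.\ $\alpha_{LB}$. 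Hence $K_{3-}\in(\alpha_{LB},\alpha_{UB})$.

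For the converse, suppose $K_{3-}\in(\alpha_{LB},\alpha_{UB})$. Since in every regime $\alpha_{LB}\ge\tfrac{1-a}{\gamma_{3-}}$ (this is precisely the content of cases (i)--(iv)), we get $K_{3+}\ge 0$, and since $K_{3-}<\alpha_{UB}\le\overline\alpha$ the lemma showing $K_{3-}<0$ applies, so $g$ is increasing on $\{u_1<x<u_2\}$; since $\alpha_{LB}\ge\underline\alpha$ and $\alpha_{UB}\le\overline\alpha$ we have $\alpha_3\in(0,1/\gamma_{2+})$, so $u_1$ in \eqref{eqn:u1 and u2 w0<u1<u2} is well defined and $g>0$; since $\alpha_{LB}\ge\underline\alpha+(\alpha_0-\underline\alpha)\mathds{1}_{\{N_1>\kappa_1N_2/(2M_1)\}}$ and $K_{3-}<\overline\alpha$, Lemma~\ref{lemma on xM1<u1} yields $w_0\le u_1$; and since $K_{3-}<\alpha_{UB}$, Lemma~\ref{lemma on u1<u2} yields $u_1\le u_2$. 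Finally, to see the equivalence is non-vacuous one invokes Lemma~\ref{lemma on LB<UB}: the interval $(\alpha_{LB},\alpha_{UB})$ is nonempty exactly under hypothesis (ii) of Theorem~\ref{thm:bdd w0<u1<u2}.

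The main obstacle is organizational rather than conceptual: one must verify that the list (a)--(d) of constraints is complete (in particular that ``$w_0\le u_1\le u_2$'' is understood inside the running requirement that $g$ be an increasing, positive solution, which contributes the bound $\tfrac{1-a}{\gamma_{3-}}$), and then carry the four-regime bookkeeping that collapses three competing lower bounds into the single piecewise expression $\alpha_{LB}$. Since Lemmas~\ref{lemma:alpha0, ul alpha, ol alpha}, \ref{lemma on the 3 LBs}, and \ref{lemma on ineqs} already supply every pairwise comparison needed, no new estimate is required beyond assembling them carefully.
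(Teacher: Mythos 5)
Your proposal is correct and takes essentially the same route as the paper: the paper's own proof is a one-line citation of Lemmas~\ref{lemma on xM1<u1}, \ref{lemma on u1<u2}, and \ref{lemma on LB<UB}, relying on the four-case bullet list immediately preceding the lemma to identify $\alpha_{LB}$ as the maximum of the three candidate lower bounds, which is exactly the bookkeeping you carry out explicitly. Your write-up is more detailed in tracking constraints (a)--(d) and verifying in each regime that $\alpha_{LB}$ dominates $\underline\alpha$, $\tfrac{1-a}{\gamma_{3-}}$, and the indicator expression, but the underlying logic coincides with the paper's.
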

	\begin{proof}
		This is a direct consequence of Lemmas \ref{lemma on xM1<u1}, \ref{lemma on u1<u2}, and \ref{lemma on LB<UB}.
	\end{proof}
	
	\subsubsection*{Solving for $K_{3-}$}
	
	It remains to prove the existence of $K_{3-}$. We do this via the first equation in \eqref{eqn:system at x=u2}, which ensures that the first derivative at $x=u_2$ is continuous. The following lemma gives a necessary and sufficient condition such that $K_{3-}$ exists and is unique.
	
	\begin{lemma}\label{lemma on K3- soln to psi}
		$K_{3-}$ is the unique solution to $\psi(z)=0$ in $(\alpha_{LB},\alpha_{UB})$ if and only if $\psi(\alpha_{LB})\leq 0$, where $\psi$ is defined in \eqref{eqn:psi}.
	\end{lemma}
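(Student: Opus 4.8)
The plan is to reduce the statement to an elementary monotonicity analysis of $\psi$ on $[\alpha_{LB},\alpha_{UB}]$. First I would record that, once $K_{3+}$ is chosen via \eqref{eqn:K3+} (equivalently $K_{3+}\gamma_{3+}=1-a-K_{3-}\gamma_{3-}$) and $u_1,u_2$ via \eqref{eqn:u1 and u2 w0<u1<u2}, the displacement $u_2-u_1$ equals $\zeta(K_{3-})$; this is simply the second formula in \eqref{eqn:u1 and u2 w0<u1<u2} rewritten with $K_{3+}\gamma_{3+}=1-a-K_{3-}\gamma_{3-}$, matching the definition of $\zeta$ in \eqref{eqn:psi}. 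The only smooth-fit requirement not yet imposed is the first equation of \eqref{eqn:system at x=u2}; substituting $u_2-u_1=\zeta(K_{3-})$ and $K_{3+}\gamma_{3+}=1-a-K_{3-}\gamma_{3-}$ into it collapses it precisely to $\psi(K_{3-})=0$. By Lemmas \ref{lemma on xM1<u1}, \ref{lemma on u1<u2}, and \ref{lemma on LB<UB}, the remaining conditions — well-definedness of the logarithms in \eqref{eqn:u1 and u2 w0<u1<u2}, the ordering $w_0\le u_1\le u_2$, and $K_{3-}<0<K_{3+}$ — are equivalent to $K_{3-}\in(\alpha_{LB},\alpha_{UB})$. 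Hence the lemma reduces to the claim that $\psi$ has a (necessarily unique) zero in $(\alpha_{LB},\alpha_{UB})$ if and only if $\psi(\alpha_{LB})\le 0$, reading $\psi(\alpha_{LB})$ as a one-sided limit in the cases (i) and (iii) preceding the lemma, where $\alpha_{LB}=\tfrac{1-a}{\gamma_{3-}}$.

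The next step is to reparametrize $\psi$ through $t:=e^{\zeta(z)}$. Put $\rho:=\tfrac{\gamma_{4-}-\gamma_{3-}}{\gamma_{3+}-\gamma_{4-}}$, which is positive since $\gamma_{4-}-\gamma_{3-}>0$ and $\gamma_{3+}>0>\gamma_{4-}$. Then the definition of $\zeta$ in \eqref{eqn:psi} reads $t^{\gamma_{3+}-\gamma_{3-}}=\rho\,\gamma_{3-}z/(1-a-\gamma_{3-}z)$; solving this for $1-a-\gamma_{3-}z$ and $\gamma_{3-}z$ (which are both positive on the domain and sum to $1-a$) and substituting back into \eqref{eqn:psi} yields the closed form
\begin{equation*}
	\psi(z)=f(t),\qquad f(t):=(1-a)\,\frac{(1+\rho)\,t^{\gamma_{3+}}}{\rho+t^{\gamma_{3+}-\gamma_{3-}}}-a,\qquad t=e^{\zeta(z)}.
\end{equation*}
Since $\zeta'(z)=\tfrac{1-a}{(\gamma_{3+}-\gamma_{3-})\,z\,(1-a-\gamma_{3-}z)}<0$ for $z<0$, the map $z\mapsto t$ is strictly decreasing; moreover a direct substitution of the formula for $\alpha_{UB}$ shows the argument of the logarithm in $\zeta$ equals $1$ at $z=\alpha_{UB}$, so $\zeta(\alpha_{UB})=0$. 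Consequently $t$ carries $[\alpha_{LB},\alpha_{UB}]$ decreasingly onto $[1,t_{LB}]$ with $t_{LB}:=e^{\zeta(\alpha_{LB})}\ge 1$ (possibly $+\infty$).

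The core of the argument is the monotonicity of $f$ on $[1,\infty)$. Differentiating, $f'(t)$ has the sign of $t^{\gamma_{3+}-1}\bigl(\gamma_{3+}\rho+\gamma_{3-}t^{\gamma_{3+}-\gamma_{3-}}\bigr)$; the bracket is strictly decreasing in $t$ because $\gamma_{3-}<0$, and at $t=1$ it simplifies (using $\rho=\tfrac{\gamma_{4-}-\gamma_{3-}}{\gamma_{3+}-\gamma_{4-}}$) to $\gamma_{3+}\rho+\gamma_{3-}=\tfrac{\gamma_{4-}(\gamma_{3+}-\gamma_{3-})}{\gamma_{3+}-\gamma_{4-}}<0$, since $\gamma_{4-}<0$, $\gamma_{3+}-\gamma_{3-}>0$, and $\gamma_{3+}-\gamma_{4-}>0$. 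Hence $f'<0$ on all of $[1,\infty)$, so $f$ is strictly decreasing there, and therefore $\psi$, being the composition of $f$ with the strictly decreasing map $z\mapsto t$, is strictly increasing on $[\alpha_{LB},\alpha_{UB}]$. In particular $\psi(\alpha_{UB})=f(1)=(1-a)-a=1-2a\ge 0$ under the standing assumption $a\le\tfrac12$.

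Finally I would conclude by the intermediate value theorem. Since $\psi$ is continuous and strictly increasing on $[\alpha_{LB},\alpha_{UB}]$ with $\psi(\alpha_{UB})\ge 0$: if $\psi(\alpha_{LB})\le 0$ there is exactly one zero in $[\alpha_{LB},\alpha_{UB}]$, which by the reduction in the first step is the claimed $K_{3-}$; and if $\psi(\alpha_{LB})>0$, strict monotonicity forces $\psi>0$ on $(\alpha_{LB},\alpha_{UB}]$, so no admissible $K_{3-}$ exists. When $\alpha_{LB}=\tfrac{1-a}{\gamma_{3-}}$ one has $\psi(z)\to -a\le 0$ as $z\downarrow\alpha_{LB}$ (equivalently $f(t)\to -a$ as $t\to\infty$), so the condition holds automatically there — consistent with Theorem \ref{thm:bdd u1<w0<u2}, where $\psi(\alpha_{LB})>0$ is assumed and which can occur only in the two cases where $\alpha_{LB}\neq\tfrac{1-a}{\gamma_{3-}}$; the borderline instances $\psi(\alpha_{LB})=0$ or $a=\tfrac12$, in which the zero lands on an endpoint and $w_0=u_1$ or $u_1=u_2$, sit on the interface with the neighbouring regime and can be treated there. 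The main obstacle is the monotonicity step: spotting the substitution $\psi(z)=f(e^{\zeta(z)})$ and the algebraic identity $\gamma_{3+}\rho+\gamma_{3-}=\gamma_{4-}(\gamma_{3+}-\gamma_{3-})/(\gamma_{3+}-\gamma_{4-})$ that fixes the sign of $f'$; once these are in hand the rest is bookkeeping with the lemmas already proved.
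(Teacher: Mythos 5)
Your proof is correct, and the core monotonicity step takes a genuinely different route from the paper. The paper works directly with $\psi'(z)$: it computes $\zeta'(z)$ and $\psi'(z)$, then signs $\psi'$ by bounding $(\gamma_{3+}-\gamma_{3-})\zeta(z)$ above by $\ln\left(\frac{-\gamma_{3-}^2 z}{\gamma_{3+}(1-a-\gamma_{3-}z)}\right)$, which in turn rests on the identity $\gamma_{3+}(\gamma_{4-}-\gamma_{3-}) < \gamma_{3-}(\gamma_{4-}-\gamma_{3+})$ and on the auxiliary fact $\zeta(z)\ge 0$ on the domain (established from $\zeta(\alpha_{UB})=0$ and $\zeta'<0$). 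Your substitution $t=e^{\zeta(z)}$ eliminates the logarithm entirely and collapses $\psi$ to the rational form $f(t)=(1-a)(1+\rho)\,t^{\gamma_{3+}}/(\rho+t^{\gamma_{3+}-\gamma_{3-}})-a$, so that $f'(t)$ is signed by the elementary bracket $\gamma_{3+}\rho+\gamma_{3-}t^{\gamma_{3+}-\gamma_{3-}}$, decreasing in $t$ and equal to $\gamma_{4-}(\gamma_{3+}-\gamma_{3-})/(\gamma_{3+}-\gamma_{4-})<0$ at $t=1$; composing with the strictly decreasing map $z\mapsto t$ then gives $\psi'>0$ with no further inequalities needed. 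The two proofs encode the same algebra (your identity at $t=1$ is exactly the cross-multiplied form of the paper's inequality), but your reparametrization makes the sign analysis transparent and avoids the two-step log comparison. The endpoint values $f(1)=1-2a$ and $\lim_{t\to\infty}f(t)=-a$ reproduce the paper's boundary checks, and your explicit reduction step (citing Lemmas \ref{lemma on xM1<u1}, \ref{lemma on u1<u2}, and \ref{lemma on LB<UB} to justify that the interval $(\alpha_{LB},\alpha_{UB})$ is exactly where all the smooth-fit and ordering constraints coexist) is sound and makes explicit what the paper leaves to the surrounding exposition. The only thing I would tidy is the phrase ``$\zeta'(z)<0$ for $z<0$'': this also uses $1-a-\gamma_{3-}z>0$, which holds on $(\tfrac{1-a}{\gamma_{3-}},\alpha_{UB})$ but is worth stating since it is the reason the one-sided limit at $\alpha_{LB}=\tfrac{1-a}{\gamma_{3-}}$ must be taken.
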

	\begin{proof}
		We can rewrite $\psi$ as follows:
		\begin{equation*}
			\begin{aligned}
				\psi(z)
				&=(1-a-\gamma_{3-}z)\left[\frac{\gamma_{3-}(\gamma_{4-}-\gamma_{3-})z}{(1-a-\gamma_{3-}z)(\gamma_{3+}-\gamma_{4-})}\right]^{\frac{\gamma_{3+}}{\gamma_{3+}-\gamma_{3-}}}+\gamma_{3-}ze^{\gamma_{3-}\zeta(z)}-a\\
				&=(1-a-\gamma_{3-}z)^{-\frac{\gamma_{3-}}{\gamma_{3+}-\gamma_{3-}}}\left[\frac{\gamma_{3-}(\gamma_{4-}-\gamma_{3-})z}{\gamma_{3+}-\gamma_{4-}}\right]^{\frac{\gamma_{3+}}{\gamma_{3+}-\gamma_{3-}}}+\gamma_{3-}ze^{\gamma_{3-}\zeta(z)}-a.
			\end{aligned}
		\end{equation*}
		Since $\gamma_{3-}<0$ and $\lim_{z\downarrow \frac{1-a}{\gamma_{3-}}}\zeta(z)=+\infty$, then  $\lim_{z\downarrow \frac{1-a}{\gamma_{3-}}}\psi(z)=-a<0$. From Lemma \ref{lemma on u1<u2}, $\zeta(\alpha_{UB})=0$. Since $a\leq \frac{1}{2}$, $\psi(\alpha_{UB})=1-2a>0$. Hence, by the intermediate value theorem, there exists a $z_0\in\left(\frac{1-a}{\gamma_{3-}},\alpha_{UB}\right)$ such that $\psi(z_0)=0$. We now prove the uniqueness of $z_0$. From the definitions of $\zeta$ and $\psi$ in \eqref{eqn:psi}, we have
		\begin{equation*}
			\begin{aligned}
				\zeta'(z)&=\frac{1-a}{(\gamma_{3+}-\gamma_{3-})(1-a-\gamma_{3-}z)z},\\
				\psi'(z)&=\gamma_{3-}\left(e^{\gamma_{3-}\zeta(z)}-e^{\gamma_{3+}\zeta(z)}\right)+\frac{1-a}{\gamma_{3+}-\gamma_{3-}}\left[\frac{\gamma_{3+}}{z}e^{\gamma_{3+}\zeta(z)}+\frac{\gamma_{3-}^2}{1-a-\gamma_{3-}z}e^{\gamma_{3-}\zeta(z)}\right].    
			\end{aligned}
		\end{equation*}
		Since $\gamma_{3+}>\gamma_{3-}$, it holds that
		\begin{equation*}
			(\gamma_{3+}-\gamma_{3-})\zeta(z)=\ln\left(\frac{\gamma_{3-}(\gamma_{4-}-\gamma_{3-})z}{(1-a-\gamma_{3-}z)(\gamma_{3+}-\gamma_{4-})}\right)<\ln\left(\frac{-\gamma_{3-}^2z}{\gamma_{3+}(1-a-\gamma_{3-}z)}\right).
		\end{equation*}
		It follows that
		\begin{equation*}
			\frac{\gamma_{3+}}{z}e^{\gamma_{3+}\zeta(z)}>\frac{-\gamma_{3-}^2}{1-a-\gamma_{3-}z}e^{\gamma_{3-}\zeta(z)}.
		\end{equation*}
		Now using $\zeta(z)\geq 0$ for $z\in\left(\frac{1-a}{\gamma_{3-}},\alpha_{UB}\right)$, we obtain  $\psi'(z)>0$ on $\left(\frac{1-a}{\gamma_{3-}},\alpha_{UB}\right)$. Hence, $z_0$ is unique.
		
		If $\alpha_{LB}=\frac{1-a}{\gamma_{3-}}$, the result immediately follows by choosing $z_0=\alpha_{3-}$. Suppose $\alpha_{LB}=\alpha_0$ or $\alpha_{LB}=\underline \alpha$. If $\psi(\alpha_{LB})\leq 0$, then the result follows by choosing $z_0=\alpha_{3-}$. Suppose $\psi(\alpha_{LB})>0$. By the intermediate value theorem and the strict monotonicity of $\psi$, the unique solution $z_1$ of $\psi$ is on the interval $\left(\frac{1-a}{\gamma_{3-}},\alpha_{LB}\right)$. Hence, there is no solution on the interval $(\alpha_{LB},\alpha_{UB})$. The proof is complete.
	\end{proof}
	\begin{remark}
		Lemmas \ref{lemma on K3- soln to psi} and \ref{lemma on ineqs} imply that if $\overline c_2\geq N_1-\frac{\kappa_1 N_2}{2M_1}+\frac{\delta M_1}{\kappa_1}$ holds, then the existence (and uniqueness) of $K_{3-}$ is guaranteed. 
	\end{remark}
	
	\subsection{Proof of Theorem \ref{thm:bdd u1<w0<u2}}
	\subsubsection*{Deriving the analytical solution}
	Suppose for now that $u_1<w_0\leq u_2$ and $w_0$ exists. In the region $\{x<u_1\}$, the HJB equation becomes \eqref{eqn:hjb x<w0} and its solution is given by $	g_1(x)=K_1\int_0^x \exp\left[\int^{u_1}_z\frac{\kappa_1}{G^{-1}(y)}dy\right]dz$, where $K_1>0$ is a constant and $G^{-1}$ is the inverse of $G$ given by \eqref{eqn:G}. We also obtain $\pi_1^*(x)=G^{-1}(x)$.
	
	In the region $\{u_1<x<w_0\}$, we have $\pi_1^*(x)=\widehat\pi_1(x)$ and the HJB equation becomes
	\begin{equation}\label{eqn:hjb two}
		0=\left[\kappa_1\mu_1( \pi^*_1(x))+\kappa_2\mu_2\left(\frac{\kappa_2}{\kappa_1}\pi^*_1(x)\right)-\overline c_2\right]g'(x)+\left[\frac{1}{2}\sigma_1^2( \pi^*_1(x))+\frac{1}{2}\sigma_2^2\left(\frac{\kappa_2}{\kappa_1} \pi^*_1(x)\right)\right]g''(x)-\delta g(x)+(1-a)\overline c_2.
	\end{equation}
	Using \eqref{eqn:optimal pi}, we obtain $\pi_1^*(x)=H(x)$, where $H$ satisfies the differential equation in \eqref{eqn:H}. Hence, $w_0$ exists and satisfies $H(w_0)=M_1$ via \eqref{eqn:defn of w0}. Similar to solving \eqref{eqn:hjb x<w0}, we obtain the following solution for \eqref{eqn:hjb two}:
	\begin{equation*}
		g_2(x)=K_{2+}\int_{u_1}^x \exp\left[\int^{w_0}_z\frac{\kappa_1}{H(y)}dy\right]dz+K_{2-},
	\end{equation*}
	where $K_{2\pm}$ are unknown constants.
	
	In the region $\{w_0<x<u_2\}$, the HJB equation becomes \eqref{eqn:hjb 3}, whose solution is of the form given in \eqref{eqn:g_3}. In the region $\{x>u_2\}$, the HJB equation simplifies to \eqref{eqn:hjb 4}, which has a solution described by \eqref{eqn:g_4}.
	We conjecture the following solution:
	\begin{equation}\label{eqn:conjecture g two}
		g(x)=
		\begin{cases}
			K_1\int_0^x \exp\left[\int^{u_1}_z\frac{\kappa_1}{G^{-1}(y)}dy\right]dz&\mbox{if $x<u_1$,}\\
			K_{2+}\int_{u_1}^x \exp\left[\int^{w_0}_z\frac{\kappa_1}{H(y)}dy\right]dz+K_{2-}&\mbox{if $u_1<x<w_0$,}\\
			K_{3+}e^{\gamma_{3+}(x-u_1)}+K_{3-}e^{\gamma_{3-}(x-u_1)}+\frac{(1-a)\overline c_2}{\delta}&\mbox{if $w_0<x<u_2$,}\\
			K_{4-}e^{\gamma_{4-}(x-u_2)}+\frac{a\overline c_1+(1-a)\overline c_2}{\delta}&\mbox{if $x>u_2$,}
		\end{cases}
	\end{equation}
	where $w_0=H(M_1)$ and $K_1,\, K_{2\pm},\, K_{3\pm}, \, K_{4-}, \, u_1,\, u_2$ are yet to be determined.
	
	To ensure twice continuous differentiability at $x=u_1$, we obtain the following equations:
	\begin{equation}\label{eqn:system at u1 case 2}
		\begin{aligned}
			K_1\int_0^{u_1} \exp\left[\int^{u_1}_z\frac{\kappa_1}{G^{-1}(y)}dy\right]dz&=K_{2-},\\
			K_1=K_{2+} \exp\left[\int^{w_0}_{u_1}\frac{\kappa_1}{H(y)}dy\right]&=1-a,
		\end{aligned}
	\end{equation}
	which imply the following:
	\begin{equation}\label{eqn:equiv to system at u1}
		\begin{aligned}
			K_1&=1-a,\\
			K_{2-}&=(1-a)\int_0^{u_1} \exp\left[\int^{u_1}_z\frac{\kappa_1}{G^{-1}(y)}dy\right]dz,\\
			K_{2+}&=(1-a) \exp\left[-\int^{w_0}_{u_1}\frac{\kappa_1}{H(y)}dy\right],
		\end{aligned}
	\end{equation}
	where $u_1$ is still unknown.
	
	To ensure twice continuous differentiability at $x=w_0$, we have the following equations:
	\begin{equation*}
		\begin{aligned}
			K_{2+}&=K_{3+}\gamma_{3+}e^{\gamma_{3+}(w_0-u_1)}+K_{3-}\gamma_{3-}e^{\gamma_{3-}(w_0-u_1)},\\
			-\frac{\kappa_1K_{2+}}{M}&=K_{3+}\gamma_{3+}^2e^{\gamma_{3+}(w_0-u_1)}+K_{3-}\gamma_{3-}^2e^{\gamma_{3-}(w_0-u_1)}.
		\end{aligned}
	\end{equation*}
	Dividing the second equation by the first equation yields the formula for $u_1$ in \eqref{eqn:u1 and u2 u1<w0<u2}.

	\subsubsection*{Establishing the bounds for $K_{3-}$}
	
	We now establish the bounds for $K_{3-}$. Since the candidate value function $g$ must be positive for $x>0$, then, from \eqref{eqn:g_3}, we must have $\alpha_3>0$, where $\alpha_3$ is defined in \eqref{eqn:K3+}. This implies that $K_{3-}>\underline \alpha$, where $\underline\alpha$ is defined in \eqref{eqn:alpha w0<u1<u2}.
	
	Next, we establish the conditions such that the formula for $u_1$ in \eqref{eqn:u1 and u2 u1<w0<u2} is well defined. Suppose for now that $K_{3-}<0<K_{3+}$. We then require the following result:
	\begin{lemma}\label{lemma on gamma3- and k1/M1}
		$\gamma_{3-}+\frac{\kappa_1}{M_1}<0$ if and only if
		\begin{equation*}
			\overline c_2 < N_1- \frac{\kappa_1 N_2}{2M_1}+\frac{\delta M_1}{\kappa_1}\quad\mbox{and}\quad N_1> \frac{\kappa_1 N_2}{2M_1}.
		\end{equation*}
	\end{lemma}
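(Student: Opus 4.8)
The plan is to exploit the fact that $\gamma_{3-}$ and $\gamma_{3+}$ are exactly the two roots of the characteristic polynomial $q(\gamma):=\tfrac12 N_2\gamma^2+(N_1-\overline c_2)\gamma-\delta$ associated with the linear ODE \eqref{eqn:hjb 3}. The product of these roots is $-2\delta/N_2<0$, so they have opposite signs and hence $\gamma_{3-}<0<\gamma_{3+}$; since $q$ opens upwards, $q(\gamma)<0$ precisely for $\gamma\in(\gamma_{3-},\gamma_{3+})$.

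First I would observe that $-\kappa_1/M_1<0<\gamma_{3+}$, so the point $-\kappa_1/M_1$ automatically lies to the left of $\gamma_{3+}$; therefore $\gamma_{3-}+\kappa_1/M_1<0$, i.e.\ $\gamma_{3-}<-\kappa_1/M_1$, is equivalent to $-\kappa_1/M_1\in(\gamma_{3-},\gamma_{3+})$, which is equivalent to $q(-\kappa_1/M_1)<0$. Evaluating $q$ at $-\kappa_1/M_1$ and multiplying the resulting inequality by $M_1/\kappa_1>0$ gives
\[
\frac{\kappa_1 N_2}{2M_1}-(N_1-\overline c_2)-\frac{\delta M_1}{\kappa_1}<0,
\]
i.e.\ $\overline c_2<N_1-\frac{\kappa_1 N_2}{2M_1}+\frac{\delta M_1}{\kappa_1}$. (Equivalently, one may clear $N_2>0$ in $\gamma_{3-}<-\kappa_1/M_1$, isolate the square root, and---after distinguishing the sign of $\frac{\kappa_1 N_2}{M_1}-(N_1-\overline c_2)$ and squaring in the nontrivial branch---arrive at the same inequality; I prefer the root-location argument since it avoids any squaring.)

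It remains to account for the second condition in the statement. The inequality $N_1>\frac{\kappa_1 N_2}{2M_1}$ in fact holds unconditionally under the standing assumptions: applying the elementary bound $\sigma_i^2(s)=\mathbb{E}[(Y_{i,k}\wedge s)^2]<s\,\mu_i(s)$ (the same inequality used in the proof of Lemma \ref{lemma: k1/M1>-gamma2-}) with $s=M_1$ for Line $1$ and $s=\kappa_2 M_1/\kappa_1$ for Line $2$ yields $N_2<M_1\mu_1(M_1)+\frac{\kappa_2 M_1}{\kappa_1}\mu_2\!\big(\frac{\kappa_2 M_1}{\kappa_1}\big)$, whence $\frac{\kappa_1 N_2}{2M_1}<\tfrac12\big(\kappa_1\mu_1(M_1)+\kappa_2\mu_2(\tfrac{\kappa_2 M_1}{\kappa_1})\big)=\tfrac12 N_1<N_1$. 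Thus $N_1>\frac{\kappa_1 N_2}{2M_1}$ is always true, so adjoining it to the condition $\overline c_2<N_1-\frac{\kappa_1 N_2}{2M_1}+\frac{\delta M_1}{\kappa_1}$ leaves the statement unchanged, and the claimed equivalence follows. The only mildly delicate points are the sign bookkeeping that turns $\gamma_{3-}<-\kappa_1/M_1$ into $q(-\kappa_1/M_1)<0$ and the recognition that the second listed condition is automatic; no genuine obstacle arises.
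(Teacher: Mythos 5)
Your proof is correct, and the root-location argument is a genuinely cleaner route than the one the paper takes. The paper's proof isolates the radical in the explicit formula for $\gamma_{3-}$, rewrites $\gamma_{3-}+\kappa_1/M_1<0$ as $\sqrt{(N_1-\overline c_2)^2+2\delta N_2}>\frac{\kappa_1 N_2}{M_1}-(N_1-\overline c_2)$ (the printed version drops the $+2\delta N_2$, which is a typo), splits into the case where the right-hand side is negative (trivially true) and the case where it is nonnegative, and squares in the nontrivial branch. Your approach instead recognizes $\gamma_{3\pm}$ as the two roots of the characteristic quadratic $q(\gamma)=\tfrac12 N_2\gamma^2+(N_1-\overline c_2)\gamma-\delta$ of \eqref{eqn:hjb 3}; since $q$ opens upward and has roots of opposite sign, $\gamma_{3-}<-\kappa_1/M_1$ is equivalent to $q(-\kappa_1/M_1)<0$, and expanding this directly yields $\overline c_2<N_1-\frac{\kappa_1 N_2}{2M_1}+\frac{\delta M_1}{\kappa_1}$ with no casework and no squaring. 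What each buys: the paper's route is elementary algebra on the closed-form root, while yours localizes the cleverness in a single observation about where a point sits relative to the roots of a fixed quadratic, which is shorter and less error-prone. You also make a point the paper does not: the second listed condition $N_1>\frac{\kappa_1 N_2}{2M_1}$ is automatic, since $\sigma_i^2(s)\le s\mu_i(s)$ gives $N_2\le\frac{M_1}{\kappa_1}N_1$, hence $\frac{\kappa_1 N_2}{2M_1}\le\frac{N_1}{2}<N_1$; indeed, the paper's own chain of inequalities in the proof of Lemma \ref{lemma: k1/M1>-gamma2-} already passes through $\frac{\kappa_1 N_2}{2N_1}<M_1$, which is the same fact. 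This means the second condition in the lemma statement, as well as the indicator $\mathds 1_{\{N_1\le\kappa_1 N_2/(2M_1)\}}$ appearing in the definitions of $\alpha_{LB}$, is vacuous under the model's standing assumptions---a useful simplification the paper does not flag.
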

	\begin{proof}
		$\gamma_{3-}+\frac{\kappa_1}{M_1}<0$ is equivalent to
		\begin{equation*}
			\sqrt{(N_1-\overline c_2)^2} \geq \frac{\kappa_1 N_2}{M_1}-(N_1-\overline c_2).
		\end{equation*}
		The inequality holds immediately if $0<\overline c_2<N_1-\frac{\kappa_1 N_2}{M_1}$. Suppose that $\overline c_2\geq N_1-\frac{\kappa_1 N_2}{M_1}$. Squaring both sides of the above inequality yields the desired result.
	\end{proof}
	The above lemma implies that $u_1$ in \eqref{eqn:u1 and u2 u1<w0<u2} is well defined if $\overline c_2 < N_1- \frac{\kappa_1 N_2}{2M_1}+\frac{\delta M_1}{\kappa_1}$. We now require that $w_0>u_1$. It can be shown that $\frac{(1-a)\left(\frac{\kappa_1}{M_1}+\gamma_{3+}\right)}{\gamma_{3-}(\gamma_{3+}-\gamma_{3-})}=\alpha_0$. Hence, $w_0>u_1$ is equivalent to $K_{3-}\leq \alpha_0$. This also proves that $K_{3-}<0$. Since \eqref{eqn:K3+} still holds, we have $K_{3-}\geq \frac{1-a}{\gamma_{3-}}$, or, equivalently, $K_{3+}>0$. Since $\overline c_2 < N_1- \frac{\kappa_1 N_2}{2M_1}+\frac{\delta M_1}{\kappa_1}$ and $N_1>\frac{\kappa_1 N_2}{2M_1}$, then by Lemmas \ref{lemma on the 3 LBs} and \ref{lemma on ineqs}, we have $\left(\underline\alpha\vee\frac{1-a}{\gamma_{3-}}\right)<\alpha_0$ and the following bounds for $K_{3-}$:
	\begin{equation*}
		\frac{1-a}{\gamma_{3-}}<K_{3-}<\alpha_{LB},
	\end{equation*}
	where $\alpha_{LB}$ is defined in \eqref{eqn:alphaLB u1<w0<u2}.
	
	For $u_2$ defined in \eqref{eqn:u1 and u2 u1<w0<u2}, it is well defined by Lemma \ref{lemma on gamma3- and k1/M1}. The following lemma gives a necessary and sufficient condition such that $u_2\geq w_0$.
	\begin{lemma}\label{lemma on u2>w0}
		$u_2\geq w_0$ if and only if
		\begin{equation*}
			\overline c_1+\overline c_2\geq N_1-\frac{\kappa_1 N_2}{2M_1}+\frac{\delta M_1}{\kappa_1}.
		\end{equation*}
	\end{lemma}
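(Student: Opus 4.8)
The plan is to start from the closed-form expression for $u_2$ in \eqref{eqn:u1 and u2 u1<w0<u2} and reduce the claimed equivalence to a one-sided inequality between $\gamma_{4-}$ and $-\kappa_1/M_1$, which is then translated into the stated condition on $\overline c_1+\overline c_2$ by elementary algebra. First I would note that, since $\gamma_{3+}-\gamma_{3-}>0$, the relation $u_2\geq w_0$ is equivalent to the argument of the logarithm in \eqref{eqn:u1 and u2 u1<w0<u2} being at least $1$.

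Next I would record the signs of the four factors in that argument: $\gamma_{3-}<0<\gamma_{3+}$, $\gamma_{4-}<0$, $\gamma_{4-}-\gamma_{3-}>0$ (already established in the proof of Theorem \ref{thm:bdd w0<u1<u2}), and $\gamma_{3-}+\kappa_1/M_1<0$ by Lemma \ref{lemma on gamma3- and k1/M1} (recall that in the present regime we have $\overline c_2<N_1-\frac{\kappa_1 N_2}{2M_1}+\frac{\delta M_1}{\kappa_1}$ and $N_1>\frac{\kappa_1 N_2}{2M_1}$, so this lemma applies). These give $-\kappa_1/M_1-\gamma_{3+}<0$ and $\gamma_{3+}-\gamma_{4-}>0$, hence both numerator and denominator of the argument are negative. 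Multiplying through by the (negative) denominator and reversing the inequality turns ``argument $\geq 1$'' into $(\gamma_{4-}-\gamma_{3-})(-\kappa_1/M_1-\gamma_{3+})\leq(\gamma_{3+}-\gamma_{4-})(\gamma_{3-}+\kappa_1/M_1)$; expanding both sides, cancelling the common product $\gamma_{3+}\gamma_{3-}$ and the common term $-(\kappa_1/M_1)\gamma_{4-}$, one is left with $(\gamma_{3-}-\gamma_{3+})(\gamma_{4-}+\kappa_1/M_1)\leq 0$, and since $\gamma_{3-}-\gamma_{3+}<0$ this is equivalent to $\gamma_{4-}\geq-\kappa_1/M_1$.

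The last step is to show that $\gamma_{4-}\geq-\kappa_1/M_1$ holds if and only if $\overline c_1+\overline c_2\geq N_1-\frac{\kappa_1 N_2}{2M_1}+\frac{\delta M_1}{\kappa_1}$. Writing $q:=N_1-\overline c_1-\overline c_2$ and substituting $\gamma_{4-}=\overline\gamma_{4-}(M_1)$ from \eqref{eqn:notations}, the inequality becomes, after multiplying by $N_2>0$, the claim $\frac{\kappa_1 N_2}{M_1}-q\geq\sqrt{q^2+2\delta N_2}$. Note that the target condition on $\overline c_1+\overline c_2$ rewrites as $q\leq\frac{\kappa_1 N_2}{2M_1}-\frac{\delta M_1}{\kappa_1}$, and the right-hand side here is strictly below $\kappa_1 N_2/M_1$; so if $q>\kappa_1 N_2/M_1$ then the square-root inequality fails (left side negative, right side positive) and the target condition also fails, and otherwise both sides of the square-root inequality are nonnegative so squaring is reversible. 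Squaring, cancelling $q^2$, dividing by $N_2$ and then by $2\kappa_1/M_1$ yields exactly $q\leq\frac{\kappa_1 N_2}{2M_1}-\frac{\delta M_1}{\kappa_1}$, completing the equivalence.

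I expect the only delicate point to be the sign bookkeeping in the middle step — correctly reversing the inequality when clearing the negative denominator and verifying that the expansion telescopes to the single factored inequality $(\gamma_{3-}-\gamma_{3+})(\gamma_{4-}+\kappa_1/M_1)\leq 0$; once $\gamma_{4-}\geq-\kappa_1/M_1$ is reached, the translation to $\overline c_1+\overline c_2$ is routine provided the validity of squaring is handled via the case split on the sign of $\kappa_1 N_2/M_1-q$.
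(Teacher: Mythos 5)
Your proposal is correct and follows essentially the same route as the paper: reduce "$u_2\geq w_0$" to "$\gamma_{4-}+\kappa_1/M_1\geq 0$" (you just fill in the telescoping expansion the paper leaves implicit), then rewrite this as $\frac{\kappa_1 N_2}{M_1}-q\geq\sqrt{q^2+2\delta N_2}$ and square after the same necessary-condition check. The extra bookkeeping on the sign of $\kappa_1 N_2/M_1-q$ is a welcome clarification but does not change the argument.
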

	\begin{proof}
		$u_2\geq w_0$ is equivalent to
		\begin{equation*}
			\frac{(\gamma_{4-}-\gamma_{3-})\left(-\frac{\kappa_1}{M_1}-\gamma_{3+}\right)}{(\gamma_{3+}-\gamma_{4-})\left(\gamma_{3-}+\frac{\kappa_1}{M_1}\right)}\geq 1.
		\end{equation*}
		The above inequality is equivalent to $\gamma_{4-}+\frac{\kappa_1}{M_1}\geq 0$, which can be rewritten as 
		\begin{equation*}
			-(N_1-\overline c_1-\overline c_2)+\frac{\kappa_1 N_2}{M_1}\geq \sqrt{(N_1-\overline c_1-\overline c_2)^2+2\delta N_2}.
		\end{equation*}
		A necessary condition for the above inequality to hold is $\overline c_1+\overline c_2\geq N_1-\frac{\kappa_1 N_2}{M_1}$. Suppose $\overline c_1+\overline c_2\geq N_1-\frac{\kappa_1 N_2}{M_1}$. Squaring both sides of the above inequality yields the desired result.
	\end{proof}
	
	\subsubsection*{Solving for $K_{3-}$}
	
	It remains to determine $K_{3-}$. Suppose $\psi(\alpha_{LB})>0$. By Lemma \ref{lemma on K3- soln to psi}, it follows that $\overline c_2<N_1-\frac{\kappa_1 N_2}{2M_1}+\frac{\delta M_1}{\kappa_1}$ and $N_1>\frac{\kappa_1 N_2}{2M_1}$. Since $\psi\left(\frac{1-a}{\gamma_{3-}}\right)<0$, we have that $K_{3-}$ is the unique solution of $\psi(z)=0$ on $\left(\frac{1-a}{\gamma_{3-}},\alpha_{LB}\right)$ via Lemma \ref{lemma on K3- soln to psi}.
	
	\subsection{Proof of Theorem \ref{thm: bdd u1<u2<w0 or M infty}}
	
	\subsubsection*{Proving that $w_0$ is infinite when $M_1<\infty$}
	
	Suppose $\overline c_1+\overline c_2<N_1-\frac{\kappa_1 N_2}{2M_1}+\frac{\delta M_1}{\kappa_1}$. Suppose further that $w_0<\infty$ exists. In the region $\{x>w_0\}$, we get \eqref{eqn:hjb 4}, whose solution is given by \eqref{eqn:g_4}. From \eqref{eqn:defn of w0} and the assumption $\overline c_1+\overline c_2<N_1-\frac{\kappa_1 N_2}{2M_1}+\frac{\delta M_1}{\kappa_1}$, we get
	\begin{equation*}
		0=-\kappa_1\frac{g_4'(w_0)}{g_4''(w_0)}-M_1=-\frac{\kappa_1}{\gamma_{4-}}-M_1<0,
	\end{equation*}
	which is a contradiction. Hence, no such $w_0$ exists and we write $w_0=\infty$.

    \subsubsection*{Deriving the analytical solution for $M_1<\infty$}

	Suppose for now that $u_1\leq u_2<w_0=\infty$. In the region $\{x<u_1\}$, the HJB equation becomes \eqref{eqn:hjb x<w0} whose solution that satisfies $g(0)=0$ is given by $g_1(x)=K_1\int_0^x \exp\left[\int^{u_1}_z\frac{\kappa_1}{G^{-1}(y)}dy\right]dz$, where $K_1>0$ is an unknown constant and $G^{-1}$ is the inverse of the function $G$ given by \eqref{eqn:G}. In the region $\{u_1<x<u_2\}$, the HJB equation becomes \eqref{eqn:hjb two} whose solution is given by $g_2(x)=K_{2+}\int_{u_1}^x \exp\left[\int^{u_2}_z\frac{\kappa_1}{H(y)}dy\right]dz+K_{2-}$, where $K_{2\pm}$ are unknown constants and $H$ satisfies \eqref{eqn:H}.

	Since $w_0=\infty$, we conjecture that $\pi^*_1(x)=M_0$ for all $x\geq u_2$ and some unknown constant $M_0\in(0,M_1)$. In the region $\{x>u_2\}$, the HJB equation then becomes
	\begin{equation}
		0=\left[\overline N_1(M_0)-\overline c_1-\overline c_2\right]g'(x)+\frac{1}{2}\overline N_2(M_0)g''(x)-\delta g(x)+a\overline c_1+(1-a)\overline c_2,
	\end{equation}
	whose solution that satisfies $\lim_{x\to\infty}g(x)=\frac{a\overline c_1+(1-a)\overline c_2}{\delta}$ is given by
	\begin{equation*}
		g_3(x)=K_3e^{\overline\gamma_{4-}(M_0)x}+\frac{a\overline c_1+(1-a)\overline c_2}{\delta},
	\end{equation*}
	where $\overline\gamma_{4-}(y)$ is defined in \eqref{eqn:notations}. We conjecture the following solution:
	\begin{equation*}
		g(x)=
		\begin{cases}
			K_1\int_0^x \exp\left[\int^{u_1}_z\frac{\kappa_1}{G^{-1}(y)}dy\right]dz&\mbox{if $x<u_1$,}\\
			K_{2+}\int_{u_1}^x \exp\left[\int^{u_2}_z\frac{\kappa_1}{H(y)}dy\right]dz+K_{2-}&\mbox{if $u_1<x<u_2$,}\\
			K_3e^{\overline \gamma_{4-}(M_0)x}+\frac{a\overline c_1+(1-a)\overline c_2}{\delta}&\mbox{if $x>u_2$,}
		\end{cases}
	\end{equation*}
	where $K_1,\, K_{2\pm},\, K_{3}, \, M_0, \, u_1,\, u_2$ are yet to be determined.	
	
	To ensure twice continuous differentiability at $x=u_1$, we obtain the equations in \eqref{eqn:system at u1 case 2}, which yield the formulas for $K_1$ and $K_{2-}$ in \eqref{eqn:equiv to system at u1}. Moreover, we have
	\begin{equation*}
		K_{2+}=(1-a) \exp\left[-\int^{u_2}_{u_1}\frac{\kappa_1}{H(y)}dy\right].
	\end{equation*}
	
	To ensure twice continuous differentiability at $x=u_2$, we obtain the following equations:
	\begin{equation}\label{eqn:system at u2 case 3}
		\begin{aligned}
			(1-a)\exp\left[\int_{u_2}^{u_1}\frac{\kappa}{H(y)}dy\right]=K_3\overline \gamma_{4-}(M_0)e^{\overline \gamma_{4-}(M_0)u_2}&=a,\\
			-\frac{(1-a)\kappa_1}{M_0}\exp\left[\int_{u_2}^{u_1}\frac{\kappa}{H(y)}dy\right]=K_3\overline\gamma_{4-}^2(M_0)e^{\overline\gamma_{4-}(M_0)u_2}.
		\end{aligned}
	\end{equation}
	From the first equation, we obtain
	\begin{equation*}
		K_3=\frac{a}{\overline \gamma_{4-}(M_0)}e^{-\overline \gamma_{4-}(M_0)u_2}.
	\end{equation*}	
	Dividing the second equation in \eqref{eqn:system at u2 case 3} by the first equation yields
	\begin{equation}\label{eqn:gamma3(M0)}
		-\frac{\kappa_1}{M_0}=\overline \gamma_{4-}(M_0).
	\end{equation}
	We still have to prove that $M_0$ exists.
	\begin{lemma}
		Suppose $\overline c_1+\overline c_2<N_1-\frac{\kappa_1 N_2}{2M_1}+\frac{\delta M_1}{\kappa_1}$. Then, $M_0$ is a solution to
		\begin{equation*}
			-\frac{\kappa_1}{y}=\overline \gamma_{4-}(y), \quad y\in(0,M_1).
		\end{equation*}
	\end{lemma}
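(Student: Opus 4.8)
The plan is to obtain $M_0$ as a zero of the auxiliary function
\begin{equation*}
\Phi(y):=\overline\gamma_{4-}(y)+\frac{\kappa_1}{y},\qquad y\in(0,M_1],
\end{equation*}
via the intermediate value theorem. First I would check that $\Phi$ is well defined and continuous on $(0,M_1]$. For every $y>0$ we have $\overline N_2(y)=\sigma_1^2(y)+\sigma_2^2\!\left(\tfrac{\kappa_2}{\kappa_1}y\right)>0$, since $\sigma_i^2$ increases strictly from $0$ (because $\overline F_i>0$ on $[0,M_i)$); hence the denominator in the defining formula \eqref{eqn:notations} for $\overline\gamma_{4-}$ never vanishes on $(0,M_1]$, and since $\overline N_1$ and $\overline N_2$ are continuous, so are $\overline\gamma_{4-}$ and $\Phi$.

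Next I would examine the sign of $\Phi$ near the two endpoints of the interval. Writing $a(y):=\overline N_1(y)-\overline c_1-\overline c_2$ and rationalizing the numerator of $\overline\gamma_{4-}$ gives
\begin{equation*}
\overline\gamma_{4-}(y)=\frac{-2\delta}{-a(y)+\sqrt{a(y)^2+2\delta\overline N_2(y)}},
\end{equation*}
where the denominator is strictly positive. As $y\downarrow 0$ we have $\overline N_1(y)\to0$ and $\overline N_2(y)\to0$, so $a(y)\to-(\overline c_1+\overline c_2)<0$ and $\overline\gamma_{4-}(y)\to-\delta/(\overline c_1+\overline c_2)$, a finite limit; since $\kappa_1/y\to+\infty$, it follows that $\Phi(y)\to+\infty$, and in particular $\Phi>0$ on a right-neighborhood of $0$. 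At the other endpoint, $\Phi(M_1)=\gamma_{4-}+\tfrac{\kappa_1}{M_1}$, and the strict inequality $\gamma_{4-}+\tfrac{\kappa_1}{M_1}<0$ is equivalent to $-\kappa_1/\gamma_{4-}<M_1$, which is exactly the inequality already established (in the paragraph showing $w_0=\infty$) to follow from the standing assumption $\overline c_1+\overline c_2<N_1-\tfrac{\kappa_1 N_2}{2M_1}+\tfrac{\delta M_1}{\kappa_1}$. Hence $\Phi(M_1)<0$.

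Finally, since $\Phi$ is continuous on $(0,M_1]$, positive near $0$, and strictly negative at $M_1$, the intermediate value theorem provides some $M_0\in(0,M_1)$ with $\Phi(M_0)=0$, i.e.\ $-\kappa_1/M_0=\overline\gamma_{4-}(M_0)$, which is the claim. I expect the only point needing care to be the behavior of $\overline\gamma_{4-}(y)$ as $y\downarrow0$: the defining formula is a $0/0$ expression there, so one must pass through the rationalized form above (or the elementary asymptotics $\mu_i(y)\sim y$, $\sigma_i^2(y)\sim y^2$) to see that $\overline\gamma_{4-}$ stays bounded; the remaining verifications are routine.
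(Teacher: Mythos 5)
Your proof is correct and follows essentially the same route as the paper: define the auxiliary function $f(y)=\overline\gamma_{4-}(y)+\kappa_1/y$, rationalize the numerator of $\overline\gamma_{4-}$ to evaluate the limit $\overline\gamma_{4-}(0^+)=-\delta/(\overline c_1+\overline c_2)$ (hence $f\to+\infty$ as $y\downarrow 0$), note $f(M_1)=\gamma_{4-}+\kappa_1/M_1<0$ under the standing assumption, and conclude by the intermediate value theorem. The paper cites Lemma~\ref{lemma on u2>w0} for the equivalence $\gamma_{4-}+\kappa_1/M_1<0\iff\overline c_1+\overline c_2<N_1-\frac{\kappa_1N_2}{2M_1}+\frac{\delta M_1}{\kappa_1}$, whereas you point to the earlier $w_0=\infty$ argument, but these are the same fact.
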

	\begin{proof}
		Since $\overline N_1(0)=\overline N_2(0)=0$, we have
		\begin{equation*}
			\overline \gamma_{4-}(0):=\lim_{y\to 0} \overline \gamma_{4-}(y)=\lim_{y\to 0} \frac{-2\delta}{-(\overline N_1(y)-\overline c_1-\overline c_2)+\sqrt{(\overline N_1(y)-\overline c_1-\overline c_2)^2+2\delta \overline N_2(y)}}=\frac{-\delta}{\overline c_1+\overline c_2}.
		\end{equation*}
		Define $f(y):=\overline \gamma_{4-}(y)+\frac{\kappa_1}{y}$. We have the following:
		\begin{equation*}
			\lim_{y\downarrow 0} f(y)=\infty\quad\mbox{and}\quad f(M_1)=\gamma_{4-}+\frac{\kappa_1}{M_1}.
		\end{equation*}
		By the intermediate value theorem, $f$ has a solution in $(0,M_1)$ if $\gamma_{4-}+\frac{\kappa_1}{M_1}<0$. From the proof of Lemma \ref{lemma on u2>w0}, $\gamma_{4-}+\frac{\kappa_1}{M_1}<0$ is equivalent to $\overline c_1+\overline c_2<N_1-\frac{\kappa_1 N_2}{2M_1}+\frac{\delta M_1}{\kappa_1}$, which proves the result.
	\end{proof}
	Similarly to the previous arguments, we have $\pi_1^*(x)=H(x)$. Hence, it follows that $u_2$ satisfies
	\begin{equation*}
		H(u_2)=M_0.
	\end{equation*}
    Finally, from the first equation in \eqref{eqn:system at u2 case 3}, $u_1$ satisfies \eqref{eqn:formula for u1 case 3}. The following lemma proves that $u_1$ is the unique solution to the above equation.
	\begin{lemma}
		$u_1$ is the unique solution to \eqref{eqn:formula for u1 case 3} in $(0,u_2)$. 
	\end{lemma}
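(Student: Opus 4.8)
The plan is to treat \eqref{eqn:formula for u1 case 3} as an equation in the single unknown $x$ and show that its left-hand side, as a function of $x$, is a continuous strictly increasing bijection from its natural domain onto an interval containing $\ln\!\big(\tfrac{a}{1-a}\big)$. Put
\[
\Phi(x):=\int_{u_2}^{x}\frac{\kappa_1}{H(y)}\,\dd y ,
\]
with $H$ the solution of \eqref{eqn:H} identified in the construction, and recall $H=\pi_1^*$ on $(u_1,u_2)$, so that by \eqref{eqn:optimal pi} one has $g_2'(x)=K_{2+}\exp\!\big[\int_x^{u_2}\tfrac{\kappa_1}{H(y)}\dd y\big]=K_{2+}e^{-\Phi(x)}$ there. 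I would first note that \eqref{eqn:formula for u1 case 3} is nothing but the $C^1$ smooth-fit condition at $u_1$ once $K_1=1-a$ and $K_{2+}=a$ have been fixed (the latter being $g_2'(u_2)=a$), i.e.\ it is the first equation of \eqref{eqn:system at u2 case 3} rewritten; so it suffices to prove that \eqref{eqn:formula for u1 case 3} has a unique root in $(0,u_2)$ and to call it $u_1$.

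Uniqueness is immediate from monotonicity: wherever $H>0$, $\Phi$ is $C^1$ with $\Phi'(x)=\kappa_1/H(x)>0$, hence $\Phi$ is strictly increasing and injective, so \eqref{eqn:formula for u1 case 3} has at most one solution. For existence I would verify $\Phi(x)\to 0$ as $x\uparrow u_2$ (immediate) together with $\Phi(x)\to-\infty$ as $x$ decreases to the left endpoint $\ell$ of the domain of $H$; then, since $a\le\tfrac12$ gives $\ln\!\big(\tfrac{a}{1-a}\big)\le 0$, the intermediate value theorem produces a (unique) $u_1\in(\ell,u_2)\subseteq(0,u_2)$ with $\Phi(u_1)=\ln\!\big(\tfrac{a}{1-a}\big)$. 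For $a=\tfrac12$ the root degenerates to $u_1=u_2$, consistent with $u_1\le u_2$.

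The genuine work — and the step I expect to be the main obstacle — is establishing that limit: one must show the backward trajectory $H$ stays strictly positive on all of $(0,u_2)$ and approaches its lower endpoint at a rate slow enough (linear, as for $G^{-1}$, since $\mu_i(s)\sim s$ and $\sigma_i^2(s)\sim s^2$ as $s\downarrow 0$) that $\kappa_1/H$ is not integrable there, so that $\Phi$ sweeps out all of $(-\infty,0)$. This is where the explicit form of \eqref{eqn:H} near $H=0$ has to be exploited. Once the range of $\Phi$ is under control, the remaining steps — monotonicity, the intermediate value theorem, and identifying the root with the $u_1$ of the conjectured solution via the smooth-fit computation above — are routine.
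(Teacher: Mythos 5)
Your argument mirrors the paper's own proof: both reduce the statement to showing that $\Phi(x):=\int_{u_2}^x \kappa_1/H(y)\,\dd y$ is strictly increasing (with derivative $\kappa_1/H>0$), vanishes at $x=u_2$, and tends to $-\infty$ as $x$ decreases toward $0$, and then invoke the intermediate value theorem together with $\ln\!\big(\tfrac{a}{1-a}\big)\le 0$ to locate a unique root $u_1\in(0,u_2)$. The identification of \eqref{eqn:formula for u1 case 3} with the smooth-fit condition at $u_1$ (via $g_2'(x)=K_{2+}e^{-\Phi(x)}$ and $K_{2+}=a$) is also consistent with \eqref{eqn:system at u2 case 3}. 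The one ingredient you flag as the main obstacle --- that $\Phi(x)\to -\infty$ at the lower endpoint, i.e.\ that $\kappa_1/H$ fails to be integrable near $0$ --- is precisely what the paper also compresses into a single line, namely that $H(0)=0$ implies $\lim_{x\to0}f(x)=-\infty$, without a rate estimate. Your remark that a linear decay of $H$ near zero (driven by $\mu_i(s)\sim s$, $\sigma_i^2(s)\sim s^2$ as $s\downarrow 0$) is what forces the divergence is correct and is the implicit content behind the paper's assertion. So you take essentially the same route as the paper; the step you deliberately leave unfinished is the same step the paper leaves implicit.
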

	\begin{proof}
		Define $f(x):=-\int_{x}^{u_2}\frac{\kappa_1}{H(y)}dy-\ln\left(\frac{a}{1-a}\right)$.  Since $a\leq \frac{1}{2}$, we have $f(u_2)=-\ln\left(\frac{a}{1-a}\right)>0$. Since $H(0)=0$, we have
		\begin{equation*}
			\lim_{x\to 0}f(x)=-\infty.
		\end{equation*}
		Thus, by the intermediate value theorem, there exists a unique $x_0\in(0,u_2)$ such that $f(x_0)=0$. Choosing $x_0=u_1$ proves the result.
	\end{proof}
	
	\subsubsection*{Proving that $w_0$ is infinite if $M_1=\infty$}
	
	We now consider the case where the support is unbounded (i.e., $M_1=\infty$). We first prove that the configurations $w_0\leq u_1\leq u_2$ and $u_1< w_0\leq u_2$ do not hold. Suppose the configuration $w_0\leq u_1\leq u_2$ holds and $w_0$ exists. Similar to the bounded support case (i.e., $M_1<\infty$), we obtain the form of the candidate solution in \eqref{eqn:conjecture g one}. The discussion follows with the bounded support case. However, from Lemma \ref{lemma on LB<UB}, we must have $\overline c_1+\overline c_2\geq \infty$ for $\alpha_{LB}\leq \alpha_{UB}$ to hold. This implies that $K_{3-}$ does not exist. Thus, the configuration $w_0\leq u_1\leq u_2$ is not possible.
	
	Suppose the configuration $u_1< w_0\leq u_2$ holds and $w_0$ exists. Similar to the bounded support case, we also obtain the form of the candidate solution in \eqref{eqn:conjecture g two}, which leads to Lemma \ref{lemma on u2>w0}. The configuration will hold if and only if $\overline c_1+\overline c_2 \geq \infty$, which implies that $u_1< w_0\leq u_2$ does not hold.

	\subsubsection*{Deriving the analytical solution when $M_1=\infty$}
	Suppose now that the configuration $u_1\leq u_2<w_0$ holds. The discussion follows similarly as in the bounded support case. We obtain the equation \eqref{eqn:gamma3(M0)}. The following lemma states that $M_0$ is indeed a solution to $\eqref{eqn:gamma3(M0)}$:
	\begin{lemma}
		$M_0$ is a solution to
		\begin{equation*}
			-\frac{\kappa_1}{y}=\overline \gamma_{4-}(y), \quad y\in(0,M_1).
		\end{equation*}
	\end{lemma}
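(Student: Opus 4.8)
The plan is to mirror the proof of the analogous lemma in the bounded-support case ($M_1<\infty$), with the right endpoint $y=M_1$ replaced by the limit $y\to\infty$. Set $f(y):=\overline\gamma_{4-}(y)+\frac{\kappa_1}{y}$ on $(0,\infty)$. Since $\overline N_1$ and $\overline N_2$ are continuous and $\overline N_2(y)>0$ for $y>0$, the function $f$ is continuous on $(0,\infty)$; hence it suffices to exhibit a sign change and invoke the intermediate value theorem.

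First I would handle the limit $y\downarrow 0$. As in the earlier lemma, $\overline N_1(y)\to 0$ and $\overline N_2(y)\to 0$, and using the rationalized form $\overline\gamma_{4-}(y)=\frac{-2\delta}{-(\overline N_1(y)-\overline c_1-\overline c_2)+\sqrt{(\overline N_1(y)-\overline c_1-\overline c_2)^2+2\delta\overline N_2(y)}}$ one gets $\overline\gamma_{4-}(0^+)=-\frac{\delta}{\overline c_1+\overline c_2}$, a finite constant. Since $\frac{\kappa_1}{y}\to+\infty$, it follows that $\lim_{y\downarrow 0}f(y)=+\infty$.

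Next I would handle $y\to\infty$, which is where the hypothesis $M_1=\infty$ is used. The finiteness of the first and second moments $\widetilde\mu_i$ and $\widetilde\sigma^2_i$ gives $\overline N_1(y)\to \kappa_1\widetilde\mu_1+\kappa_2\widetilde\mu_2=:N_1^\infty<\infty$ and $\overline N_2(y)\to \widetilde\sigma^2_1+\widetilde\sigma^2_2=:N_2^\infty\in(0,\infty)$ as $y\to\infty$ (recall $\mu_i(M_i)=\widetilde\mu_i$ and $\sigma_i^2(M_i)=\widetilde\sigma_i^2$). Consequently $\overline\gamma_{4-}(y)\to \overline\gamma_{4-}(\infty):=\frac{-(N_1^\infty-\overline c_1-\overline c_2)-\sqrt{(N_1^\infty-\overline c_1-\overline c_2)^2+2\delta N_2^\infty}}{N_2^\infty}$, and this limit is strictly negative: if $N_1^\infty-\overline c_1-\overline c_2\ge 0$ both terms in the numerator are nonpositive while the radical is strictly positive, and if $N_1^\infty-\overline c_1-\overline c_2<0$ the inequality $\sqrt{A^2+B}>|A|$ for $B>0$ forces the radical to dominate $-(N_1^\infty-\overline c_1-\overline c_2)$, so the numerator is negative in either case. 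Since $\frac{\kappa_1}{y}\to 0$, we obtain $\lim_{y\to\infty}f(y)=\overline\gamma_{4-}(\infty)<0$. Combining this with $\lim_{y\downarrow 0}f(y)=+\infty$ and the continuity of $f$, the intermediate value theorem yields a point $M_0\in(0,\infty)=(0,M_1)$ with $f(M_0)=0$, that is, $-\frac{\kappa_1}{M_0}=\overline\gamma_{4-}(M_0)$, which is the claim.

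I expect the only genuinely delicate point to be the strict negativity of $\overline\gamma_{4-}(\infty)$: unlike in the bounded case, there is no hypothesis of the form $\overline c_1+\overline c_2<N_1-\frac{\kappa_1 N_2}{2M_1}+\frac{\delta M_1}{\kappa_1}$ available to pin down the sign at the endpoint, so the argument must instead rely on the intrinsic structure of $\overline\gamma_{4-}$. As indicated above this is handled cleanly by the elementary inequality $\sqrt{A^2+B}>|A|$ for $B>0$ together with $2\delta N_2^\infty>0$; the remaining ingredients (continuity of $f$, the two boundary limits, and the moment convergences $\overline N_i(y)\to N_i^\infty$) are routine.
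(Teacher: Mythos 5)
Your proof is correct and follows the same route the paper takes: define $f(y)=\overline\gamma_{4-}(y)+\kappa_1/y$, show $f(0^+)=+\infty$ (using $\overline N_i(0)=0$ and the rationalized form to get $\overline\gamma_{4-}(0^+)=-\delta/(\overline c_1+\overline c_2)$), show $f(\infty)=\gamma_{4-}<0$, and apply the intermediate value theorem. The only difference is that the paper asserts $\gamma_{4-}<0$ without comment, whereas you supply the short case check via $\sqrt{A^2+B}>|A|$ for $B>0$; that is a welcome elaboration rather than a change of approach.
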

	\begin{proof}
		Since $\overline N_1(0)=\overline N_2(0)=0$, then we have
		\begin{equation*}
			\overline \gamma_{4-}(0):=\lim_{y\to 0} \overline \gamma_{4-}(y)=\frac{-\delta}{\overline c_1+\overline c_2}.
		\end{equation*}
		Define $f(y):=\overline \gamma_{4-}(y)+\frac{\kappa_1}{y}$. We have the following:
		\begin{equation*}
			\lim_{y\downarrow 0} f(y)=\infty\quad\mbox{and}\quad f(M_1)=\gamma_{4-}<0.
		\end{equation*}
		By the intermediate value theorem, $f$ has a solution in $(0,M_1)$.
	\end{proof}
	
	\subsection{Proof of Theorem \ref{thm:ubdd M1 finite}}
	
	Suppose $M_1<\infty$. The candidates for the optimal reinsurance strategies still satisfy \eqref{eqn:optimal pi}. In the region $\{x<u_1\}$, the HJB equation becomes \eqref{eqn:hjb x<w0}, whose solution is given by $g_1(x)=K_1\int_0^x \exp\left[\int^{w_0}_z\frac{\kappa_1}{G^{-1}(y)}dy\right]dz$, where $K_1>0$ is an unknown constant and $G^{-1}$ is the inverse of the function $G$ given by \eqref{eqn:G}. We also obtain $\pi_1^*(x)=G^{-1}(x)$. Since $\pi_i'(x)>0$ and $\pi_i(0)=0$, $w_0$ exists and satisfies $w_0=G(M_1)$.
	
	In the region $\{w_0<x<u_1\}$, the HJB equation becomes \eqref{eqn:hjb w0<x<u1}, whose solution is given by \eqref{eqn:g_2}. In the region $\{x>u_1\}$, we must have $g'(x)=1-a$. Hence, 
	\begin{equation*}
		g_3(x)=(1-a)\left[x-u_1+K_3\right].
	\end{equation*}
	We conjecture the following solution:
	\begin{equation*}
		g(x)=
		\begin{cases}
			K_1\int_0^x \exp\left[\int^{w_0}_z\frac{\kappa_1}{G^{-1}(y)}dy\right]dz&\mbox{if $x<w_0$,}\\
			K_{2+}e^{\gamma_{2+}(x-w_0)}+K_{2-}e^{\gamma_{2-}(x-w_0)}&\mbox{if $w_0<x<u_1$,}\\
			(1-a)\left[x-u_1+K_3\right]&\mbox{if $x>u_1$,}
		\end{cases}
	\end{equation*}
	where $w_0=G(M_1)$, $\gamma_{2\pm}:=\overline \gamma_{2\pm}(M_1)$ are defined in \eqref{eqn:notations}, and $K_1,\, K_{2\pm},\, K_3, \, u_1$ are yet to be determined.
	
	We now solve for the unknowns using the principle of smooth fit. At $x=u_1$, we have the following system of equations:
	\begin{equation*}
		\begin{aligned}
			K_{2+}e^{\gamma_{2+}u_1}+K_{2-}e^{\gamma_{2-}u_1}&=(1-a)K_3,\\
			K_{2+}\gamma_{2+}e^{\gamma_{2+}u_1}+K_{2-}\gamma_{2-}e^{\gamma_{2-}u_1}&=1-a,\\
			K_{2+}\gamma^2_{2+}e^{\gamma_{2+}u_1}+K_{2-}\gamma^2_{2-}e^{\gamma_{2-}u_1}&=0.
		\end{aligned}
	\end{equation*}
	From the second and third equations, we obtain
	\begin{equation*}
		K_{2+}=-\frac{\gamma_{2-}(1-a)e^{-\gamma_{2+}u_1}}{\gamma_{2+}(\gamma_{2+}-\gamma_{2-})}>0\quad\mbox{and}\quad K_{2-}=\frac{\gamma_{2+}(1-a)e^{-\gamma_{2-}u_1}}{\gamma_{2-}(\gamma_{2+}-\gamma_{2-})}<0.
	\end{equation*}
	Substituting it into the first equation yields
	\begin{equation*}
		K_3=\frac{\gamma_{2+}+\gamma_{2-}}{\gamma_{2+}\gamma_{2-}}=\frac{N_1}{\delta},
	\end{equation*}
	where $N_1:=\overline N_1(M_1)$ is defined in \eqref{eqn:notations}.
	
	At $x=w_0$, it suffices to show that the derivatives are continuous. We have the following system of equations:
	\begin{align}
		K_1&=\frac{1-a}{\gamma_{2+}-\gamma_{2-}}\left[\gamma_{2+}e^{\gamma_{2-}(w_0-u_1)}-\gamma_{2-}e^{\gamma_{2+}(w_0-u_1)}\right],\label{eqn:K1}\\
		-\frac{\kappa_1}{M_1}K_1&=\frac{(1-a)\gamma_{2+}\gamma_{2-}}{\gamma_{2+}-\gamma_{2-}}\left[e^{\gamma_{2-}(w_0-u_1)}-e^{\gamma_{2+}(w_0-u_1)}\right].
	\end{align}
	Combining the two equations yields
	\begin{equation}\label{eqn:xM1 to u1}
		e^{(\gamma_{2+}-\gamma_{2-})(w_0-u_1)}=\frac{\gamma_{2+}(\kappa_1+\gamma_{2-}M_1)}{\gamma_{2-}(\kappa_1+\gamma_{2+}M_1)}=\frac{\frac{\kappa_1}{\gamma_{2-}}+M_1}{\frac{\kappa_1}{\gamma_{2+}}+M_1}.
	\end{equation}
	Since we assume that $w_0\leq u_1$, it must be the case that $e^{(\gamma_{2+}-\gamma_{2-})(w_0-u_1)}\in(0,1)$. Moreover, since $\gamma_{2-}<0$, we have $\frac{\frac{\kappa_1}{\gamma_{2-}}+M_1}{\frac{\kappa_1}{\gamma_{2+}}+M_1}<1$. From \eqref{eqn:xM1 to u1} and Lemma \ref{lemma: k1/M1>-gamma2-}, the formula for $u_1$ in \eqref{eqn:u1 unbdd M1 finite} is well defined. It must be noted that since $\frac{\gamma_{2-}(\kappa_1+\gamma_{2+}M_1)}{\gamma_{2+}(\kappa_1+\gamma_{2-}M_1)}>1$, it holds that $w_0\leq u_1$. Thus, we have obtained the form of the value function in \eqref{g:unbdd M1 finite}. Similar arguments from the previous results yield that $g$ is increasing and concave.
	
	\subsection{Proof of Theorem \ref{thm: unbdd M1 infty}}
	In this scenario, $M_1=\infty$. Hence, we only have one ``switching" point, which is $u_1$ defined in \eqref{eqn:defn of u1 & u2}. 
	
	In the region $\{x<u_1\}$, we obtain the HJB equation in \eqref{eqn:hjb x<w0}, whose solution that satisfies $g(0)=0$ and $g'(u_1)=1-a$ is given by
	\begin{equation*}
		g_1(x)=(1-a)\int_0^x \exp\left[\int^{u_1}_z\frac{\kappa_1}{G^{-1}(y)}dy\right]dz,
	\end{equation*}
	where $G^{-1}$ is the inverse of the function $G$ defined in \eqref{eqn:G}.
	In the region $\{x>u_1\}$, we must have $g'(x)=1-a$. We conjecture the following solution:
	\begin{equation*}
		g(x)=
		\begin{cases}
			(1-a)\int_0^x \exp\left[\int^{u_1}_z\frac{\kappa_1}{G^{-1}(y)}dy\right]dz&\mbox{if $x<u_1$,}\\
			(1-a)\left[x-u_1+K_3\right]&\mbox{if $x>u_1$,}
		\end{cases}
	\end{equation*}
	where $K_3$ and $u_1$ are yet to be determined.
	
	By construction, $g'(x)$ is continuous, so we only need to make $g(x)$ and $g''(x)$ be continuous at the switching point $u_1$. We first ensure that $g''(x)$ is continuous, that is, we want
	\begin{equation*}
		-\frac{(1-a)\kappa_1}{\pi_1(u_1-)}=g''(u_1-)=0.
	\end{equation*}
	This implies that $\pi_1(u_1-)=\infty$. Define
	\begin{equation*}
		G(\infty):=\lim_{y\to\infty} G(y).
	\end{equation*}
	We require the following result to ensure that $G(\infty)$ is finite.
	\begin{lemma}
		$G(\infty)<\infty$.
	\end{lemma}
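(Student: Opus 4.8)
The plan is to show that the integrand defining $G$ in \eqref{eqn:G} is dominated on all of $(0,\infty)$ by an explicitly integrable function, whence $G(\infty)<\infty$ by comparison. Write $A(z):=\sigma_1^2(z)+\sigma_2^2\!\left(\tfrac{\kappa_2}{\kappa_1}z\right)$ for the numerator and $B(z):=2\kappa_1 z\mu_1(z)+2\kappa_2 z\mu_2\!\left(\tfrac{\kappa_2}{\kappa_1}z\right)+\tfrac{2\delta}{\kappa_1}z^2-\kappa_1 A(z)$ for the denominator, so that $G(\infty)=\int_0^\infty A(z)/B(z)\,\dd z$. First I would record two elementary facts about $A$: since the claim sizes have finite second moments, $A$ is nondecreasing with $A(z)\uparrow\widetilde\sigma_1^2+\widetilde\sigma_2^2=:\widetilde\sigma^2<\infty$; and since $1-F_i\le 1$ gives $\sigma_i^2(s)\le s^2$, one also has the quadratic bound $A(z)\le\bigl(1+\kappa_2^2/\kappa_1^2\bigr)z^2$ for every $z>0$.

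The key step is a clean lower bound on $B$. The inner inequality proved in Lemma~\ref{lemma:h is inc}, namely $\sigma_i^2(s)\le s\mu_i(s)$, yields $\kappa_1\sigma_1^2(z)\le\kappa_1 z\mu_1(z)$ and $\kappa_1\sigma_2^2\!\left(\tfrac{\kappa_2}{\kappa_1}z\right)\le\kappa_2 z\mu_2\!\left(\tfrac{\kappa_2}{\kappa_1}z\right)$, hence $\kappa_1 A(z)\le\kappa_1 z\mu_1(z)+\kappa_2 z\mu_2\!\left(\tfrac{\kappa_2}{\kappa_1}z\right)$. Substituting this into the definition of $B$ absorbs the negative term and leaves
\[
B(z)\ \ge\ \kappa_1 z\mu_1(z)+\kappa_2 z\mu_2\!\left(\tfrac{\kappa_2}{\kappa_1}z\right)+\tfrac{2\delta}{\kappa_1}z^2\ \ge\ \tfrac{2\delta}{\kappa_1}z^2\ >\ 0,\qquad z>0,
\]
which incidentally reconfirms that $G$ is well defined.

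Combining the two estimates gives $A(z)/B(z)\le\tfrac{\kappa_1}{2\delta}\bigl(1+\kappa_2^2/\kappa_1^2\bigr)$ for $z\in(0,1]$ and $A(z)/B(z)\le\tfrac{\kappa_1\widetilde\sigma^2}{2\delta z^2}$ for $z>1$, so
\[
G(\infty)=\int_0^\infty\frac{A(z)}{B(z)}\,\dd z\ \le\ \frac{\kappa_1\bigl(1+\kappa_2^2/\kappa_1^2\bigr)}{2\delta}+\frac{\kappa_1\widetilde\sigma^2}{2\delta}\int_1^\infty\frac{\dd z}{z^2}\ =\ \frac{\kappa_1\bigl(1+\kappa_2^2/\kappa_1^2+\widetilde\sigma^2\bigr)}{2\delta}\ <\ \infty,
\]
which would complete the proof. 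I do not expect a genuine obstacle here; the only point requiring care is that the dominating function must control the integrand simultaneously near $z=0$ — handled by $A(z)=O(z^2)$, which exactly cancels the $z^2$ in the lower bound for $B$ — and near $z=\infty$, handled by the boundedness of $A$ against the quadratic growth of $B$, the finiteness of $\widetilde\sigma^2$ being precisely the finite-second-moment hypothesis.
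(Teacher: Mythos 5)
Your proof is correct, and it refines the paper's argument rather than departing from it. The paper's proof is one line: since $\sigma_i^2(z)\to\widetilde\sigma_i^2$ and $\mu_i(z)\to\widetilde\mu_i$, the integrand decays like $1/z^2$, hence is integrable. You formalize exactly this tail decay via explicit domination, but you add two pieces of rigor the paper leaves implicit: (1) a clean positive lower bound $B(z)\ge\frac{2\delta}{\kappa_1}z^2$ on the denominator, obtained by recycling the inequality $\sigma_i^2(s)\le s\mu_i(s)$ from the proof of Lemma~\ref{lemma:h is inc}, which simultaneously confirms that $G$ is well defined on all of $(0,\infty)$; and (2) control of the integrand near $z=0$ via $A(z)=O(z^2)$, so the $z^2$ in the numerator cancels the $z^2$ in the denominator and there is no singularity at the origin. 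The paper's version tacitly assumes the integral converges at $z=0$ (and that $B>0$ there), which is true but unaddressed; your version closes that gap and produces an explicit finite bound $G(\infty)\le\frac{\kappa_1\bigl(1+\kappa_2^2/\kappa_1^2+\widetilde\sigma^2\bigr)}{2\delta}$. This is strictly more informative for the same amount of work, and both routes rest on the same core observation that the denominator grows quadratically while the numerator stays bounded.
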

	\begin{proof}
		Since $\sigma_i^2(z)\to \widetilde\sigma_i^2$ and $\mu_i(z)\to\widetilde\mu_i$ as $z\to \infty$, the integrand of $G$ converges to zero at the rate of $\frac{1}{z^2}$, which proves the integrability of $G(\infty)$.
	\end{proof}
	Since $g'(x)$ and $g''(x)$ are continuous at $x=u_1$, then from the HJB equation, we have
	\begin{equation*}
		\begin{aligned}
			0
			&=\frac{1}{2}\overline N_2(u_1)g''(u_1)+\left[\overline N_1(u_1)\right]g'(u_1)-\delta g(u_1)\\
			&=\left[\kappa_1 \widetilde\mu_1+\kappa_2 \widetilde\mu_2\right](1-a)-\delta (1-a)K_3,
		\end{aligned}
	\end{equation*}
	where $\overline N_1(y)$ and $\overline N_2(y)$ are defined in \eqref{eqn:notations}. It implies that $K_3=\frac{N_1}{\delta}$, where $N_1=\overline N_1(M_1)$. Thus, we have obtained the form of the candidate value function in \eqref{eqn:g unbdd M infty}. Similar to the previous arguments, it can be shown that $g$ is increasing and concave.

    \section{Conclusion}\label{sec:conclusion}

In this paper, we investigate optimal dividend payout, reinsurance, and capital injection strategies for insurers with two business lines, where reinsurance combines proportional and excess-of-loss coverage. We establish that the optimal reinsurance strategy is pure excess-of-loss and identify distinct, mutually exclusive dividend payout strategies for both bounded and unbounded dividend rates. The optimal capital injection strategy under both bounded and unbounded dividend rates is the same across all scenarios: capital transfers occur only to save one business line from ruin, provided that adequate reserves remain.

Future research could explore alternative types of dividend, such as periodic dividends and immediate dividends that incorporate transaction costs \citep[see, e.g.,][]{kelbert2025,avanzi2020,avanzi2021}. Alternative processes that model the reserve level, such as Lévy processes, could also be considered \citep[e.g.,][]{matalopez2024}. The objective function could be modified to account for penalties for early ruin \citep[e.g.,][]{strini2023,xu2020}. It would also be of interest to investigate deep learning approaches \citep[e.g.,][]{cheng2020}.
	
		\bibliographystyle{apalike} 
	\bibliography{References}

	\end{document}